\theoremstyle{definition}
\newtheorem{rem}[subsection]{Remark}%[section]
\theoremstyle{plain}
\newtheorem{prop}[subsection]{Proposition}
\newtheorem{thm}[subsection]{Theorem}
\newtheorem{lem}[subsection]{Lemma}
\newtheorem{cor}[subsection]{Corollary}
\newcommand{\mbf}{\mathbf}
\newcommand{\mbb}{\mathbb}
\newcommand{\mrm}{\mathrm}
\newcommand{\B}{\mbf B}
\newcommand{\D}{\mathcal D}
\newcommand{\G}{\mbf G}
\newcommand{\IC}{\mrm{IC}}
\newcommand{\K}{\mathcal K}
\newcommand{\Q}{\bar{\mbb Q}}
\newcommand{\T}{\Theta_{\Omega}}
\newcommand{\U}{\mbf U}
\newcommand{\X}{\mbf X}
\newcommand{\Y}{\mbf Y}
\newcommand{\II}{\mathscr I}
\newcommand{\EE}{\mathscr E}
\newcommand{\FF}{\mathscr F}
\newcommand{\DD}{\mathscr D}
\newcommand{\QQ}{\mathscr Q}
\newcommand{\KK}{\mathscr K}
\newcommand{\BB}{\mathscr B}
\title[Geometric  $(\overset{.}{\U}, \overset{.}{\B})$, II] {On  geometric realizations of  quantum modified algebras and their canonical bases, II}
\author{Yiqiang Li}
\address{Department of Mathematics\\ 
Virginia Polytechnic Institute and State University\\
460 McBryde Hall\\Blacksburg, VA, 24061}
\email{yqli@math.vt.edu}
\date{\today}
\keywords{Quantum modified algebras;  canonical bases; equivariant derived categories; equivariant perverse sheaves} 
\subjclass{17B37; 14L30; 14F05; 14F43}
\begin{document}

\begin{abstract}
We prove part of the conjectures in ~\cite{Li10b}. 
We  also relate   the construction of quantum modified algebras in ~\cite{Li10b} 
with  the functorial construction in ~\cite{Zheng08}.
\end{abstract}

\maketitle

\tableofcontents

\section{Introduction}

In ~\cite{Li10b}, we propose a geometric construction of quantum modified algebras  $\dot{\U}$ (or rather their quotients) and their canonical bases. 
The construction involves certain localized equivariant derived categories of double framed representation varieties associated to a quiver.
The convolution product is defined by using the left adjoints of the localization functors,  the general direct image functors with compact support 
and the general  inverse image functors. 

In this paper, we show that the complexes $\II_{\mu}$, $\EE^{(n)}_{\mu, \mu-n\alpha_i}$ and 
$\FF^{(n)}_{\mu,\mu+n\alpha_i}$ defined in ~\cite{Li10b} satisfy the defining relations of the quantum modified algebras. 
We then show that the monomials formed  by these complexes are bounded, which is not clear from the definition. 
Finally, we show that the construction in ~\cite{Li10b} and the functorial construction in ~\cite{Zheng08} are compatible, 
in the sense that there is a functor
from the category of complexes in ~\cite{Li10b} to the category of functors in ~\cite{Zheng08} respecting the convolution products.
If the conjectures in ~\cite{Li10b} hold, this functor  gives rise to the positivity property of the structure constants of the action of the canonical basis elements of 
$\dot{\U}$  on the canonical basis elements of the tensor product of the irreducible integrable highest weight representations of $\dot{\U}$.

\subsection{Notation}

\label{notations}

Let  $\Gamma=(I, H, ', '', \bar\empty \; )$ be   a loop-free graph. It corresponds to a symmetric Cartan datum. 
Fix a root datum $(\X, \Y, (,))$ of this Cartan datum and the roots   $\{\alpha_i|i\in I\}$ in $\X$  and the coroots $\{ \check \alpha_i| i\in I\}$ in $\Y$.
It satisfies that $(\check \alpha_i, \alpha_i)=2$ and $(\check \alpha_i, \alpha_j) = - \#\{ h\in H| h'=i, h''=j\}$ for any $i\neq j \in I$.
The set $\X^+$ of dominant integral weights are the collections of elements $\lambda\in \X$ such that $(\check \alpha_i,\lambda)\in \mbb N$ for any $i\in I$.

Throughout this paper,  we fix  an algebraically closed field $k$ of characteristic $p$,
a dominant weight $\lambda$, an element  $d=\sum d_i i\in \mbb N[I]$,
 and an $I$-graded vector space $D$ over $k$ such that $(\check \alpha_i, \lambda)=d_i =\dim D_i$ for any $i\in I$. 
 
 Let $l$ be a prime number different from $p$ and $\Q_l$ an algebraic closure of the field of $l$-adic numbers. 
 We shall refer to  ~\cite{BBD82}, ~\cite{FK88},  ~\cite{KW01}, ~\cite{BL94}, ~\cite{LMB00}, ~\cite{KS90}, ~\cite{LO08a}-\cite{LO09}, ~\cite{Schn08} and ~\cite{WW09} for 
 the definitions of 
 the derived category
 $\D(X)$ of complexes of $\Q_l$-constructible  sheaves on the variety $X$ and  the equivariant derived category
 $\D_G(X)$ of $X$ if the linear algebraic group $G$ acts on $X$. 
 The full subcategories of bounded and bounded below complexes will be denoted by the same notation with a superscript $b$ and $-$, respectively.
 
 If two complexes $K_1$ and $K_2$ are isomorphic, we simply write $K_1=K_2$.
 
 We shall use the notations in ~\cite{Li10b}
 for Grothendieck's six operations. 
 In particular, we write $f_*$, $f^*$, $f_!$ and $f^!$, respectively,  for the   functors $Rf_*$,
 $Lf^*$, $Rf_!$ and $Rf^!$ between $\D(X)$ and $\D(Y)$ if $f: Y\to X$ is a morphism of varieties. 
More generally,  if  the linear algebraic groups $G$ and $G'=G\times G_1$  act on $X$ and $Y$, respectively, such that 
 $f:Y\to X$ is compatible with the group actions, i.e., $f((g, g_1) .y) = g. f(y)$, for any $(g, g_1)\in G'$ and $y\in Y$,  the operators $Rf_*$,  $Rf^*$, $Lf_!$ and $Rf^!$ between
 $\D_G(X)$ and $\D_{G'}(Y)$ in ~\cite{LO08b} will be denoted by $Qf_*$,  $Qf^*$, $Qf_!$ and $Qf^!$, respectively.
 If $G_1$ is trivial, we use  the notations $f_*$, $f^*$, $f_!$ and $f^!$, instead of $Qf_*$,  $Qf^*$, $Qf_!$ and $Qf^!$.
 
 The constant sheaf on $X$ will be denoted by $\Q_{l, X}$, whether or not $G$-equivariant.

\subsection{Quiver variety}
\label{framed}

Fix an orientation  $\Omega$ of $\Gamma$, i.e., $\Omega$ is a subset of $H$ such that $\Omega\sqcup \bar \Omega =H$.
To any $I$-graded vector space $V$ over the field $k$, attached the framed representation variety  of the quiver $(\Gamma, \Omega)$:
\[
\mbf E_{\Omega}(D, V) = \oplus_{h\in \Omega} \mrm{Hom} ( V_{h'} , V_{h''}) \oplus \oplus_{i\in I} \mrm{Hom} (V_i, D_i).
\] 
Elements in $\mbf E_{\Omega}(D, V)$ will be denoted by $X=(x,q)$ where $x$  and $q$ are in the first  and  second  components, respectively.

Let $G_V=\prod_{i\in I} \mrm{GL}(V_i)$ be the product of general linear group $\mrm{GL}(V_i)$.
The group $G_D\times G_V$ acts on $\mbf E_{\Omega}(D, V)$ by conjugation:
\[
(f, g) .(x, q)  =(x', q'), \quad \mbox{where} \; x'_{h} = g_{h''} x_h g_{h'}^{-1}, \;  q'_i= f_i q_i g_i^{-1},  \quad  \forall h\in \Omega, i\in I,
\]
for any $(f, g)\in G_D\times G_V$,   and $(x, q) \in \mbf E_{\Omega}(D, V)$.
To each $i\in I$, we set
\[
X(i)=q_i+\sum_{h\in \Omega: h'=i} x_h : V_i \to D_i\oplus \bigoplus_{h\in \Omega: h'=i} V_{h''} .
\]
To any pair $( V^1, V^2)$ of $I$-graded vector spaces, we set
\[
\mbf E_{\Omega}\equiv  \mbf E_{\Omega}(D, V^1, V^2) = \mbf E_{\Omega}(D,V^1) \oplus  \mbf E_{\Omega}(D, V^2).
\]
The group 
$\G=G_D\times G_{V^1} \times G_{V^2}$
acts on $\mbf E_{\Omega}$ 
by
$(f, g^1, g^2). (X^1, X^2)=( (f, g^1). X^1, (f, g^2) . X^2)$, for any $ (f, g^1, g^2)\in \G, (X^1, X^2) \in \mbf E_{\Omega}$.

Similarly, to a triple  $(V^1, V^2, V^3)$ of $I$-graded vector spaces, we set
\[
\mbf E_{\Omega}(D, V^1, V^2, V^3)= \mbf E_{\Omega}(D, V^1) 
\oplus  \mbf E_{\Omega}(D, V^2) \oplus \mbf E_{\Omega}(D, V^3).
\]
The group $\mbf H=G_D\times G_{V^1} \times G_{V^2}\times G_{V^3}$ acts on 
$\mbf E_{\Omega}(D, V^1, V^2, V^3)$ by 
\[
(f, g^1, g^2, g^3). (X^1, X^2, X^3)=( (f, g^1). X^1, (f, g^2) . X^2, (f, g^3).X^3), 
\]
for any 
$(f, g^1, g^2, g^3)\in \mbf H, (X^1, X^2, X^3) \in \mbf E_{\Omega}(D, V^1, V^2, V^3)$.

\subsection{Fourier-Deligne transform}
\label{Fourier}

Let $\Omega'$ be another orientation of the graph $\Gamma$.  
The various varieties defined in ~\ref{framed} can be defined with respect to $\Omega'$ and $\Omega\cup \Omega'$. 
Define a pairing $u_i: \mbf E_{\Omega\cup \Omega'}(D, V^i)\to k$ by 
$u_i(X^i) =\sum_{h\in \Omega\backslash \Omega'} \mbox{tr}(x^i_hx^i_{\bar h})$ for any $X^i\in \mbf E_{\Omega\cup\Omega'}(D, V^i)$
where $\mbox{tr}(-)$ is the trace of the endomorphism in the parenthesis.
Fix  a non-trivial character $\chi$ from the field $ \mbb F_p$ of $p$ elements to  
$ \Q_l^*:=\Q_l\backslash \{0\}$.
Denote by $\mathcal L_{\chi}$  the local system on $k$ corresponding to $\chi$.
Let 
\begin{equation}
\label{L}
\mathcal L_i =u_i^* \mathcal L_{\chi}.
\end{equation}
Let $u_{ij}: \mbf E_{\Omega \cup \Omega'}(D, V^i, V^j)\to k$  be the pairing defined by $u_{ij} (X^i, X^j) =-u_i(X^i) + u_j(X^j)$ 
for any $(X^i, X^j)\in \mbf E_{\Omega \cup \Omega'}(D, V^i, V^j)$. We set
\begin{equation}
\label{L2}
\mathcal L_{ij}=u_{ij}^* \mathcal L_{\chi}.
\end{equation}
Consider the diagram 
\[
\begin{CD}
\mbf E_{\Omega}(D, V^i, V^j) @<m_{ij} << \mbf E_{\Omega\cup \Omega'} (D, V^i, V^j) @>m_{ij}'>> \mbf E_{\Omega'}(D, V^i, V^j),
\end{CD}
\]
where the morphisms are obvious projections.
The Fourier-Deligne transform
\begin{equation}
\label{Fourier-functor}
\Phi_{\Omega}^{\Omega'}: \D^b_{\G}(\mbf E_{\Omega}(D, V^i, V^j) )\to \D^b_{\G}(\mbf E_{\Omega'}(D, V^i, V^j))
\end{equation}
is defined to be $\Phi_{\Omega}^{\Omega'}(K) =m'_{ij!} (m_{ij}^*(K) \otimes\mathcal L_{ij})[r_{ij}]$, where $r_{ij}$ is the rank of the vector bundle 
$\mbf E_{\Omega}\to \mbf E_{\Omega\cap \Omega'}$ and $\D^b_{\G}(\mbf E_{\Omega})$ is the $\G$-equivariant derived category of $\mbf E_{\Omega}$ (see ~\cite{Li10b}, ~\cite{BL94}). Note that $r_{ij}=\sum_{h\in \Omega\backslash \Omega'} \dim V^i_{h'} \dim V^i_{h''} + \dim V^j_{h'} \dim V^j_{h''}$.

\subsection{Localization}
\label{general-localization}
To each $i\in I$, we fix an orientation $\Omega_i$ of the graph $\Gamma$ such that $i$ is a $source$, 
i.e., if $i\in \{ h', h''\}$ then $h'=i$ for any $h\in \Omega_i$.
Let $F_i$ be the $\G$-invariant closed subvariety of $\mbf E_{\Omega_i}$ consisting of all elements $(X^1, X^2)$ such that either
$X^1(i)$ or $X^2(i)$ is not injective.  
Let $U_i$ be its  $\G$-invariant complement. 

The full subcategory $\mathcal N_i$  of the  category $\D^b_{\G}(\mbf E_{\Omega})$ 
has  objects $K\in \D^b_{\G}(\mbf E_{\Omega})$ satisfying that the support of the complex
$\Phi_{\Omega}^{\Omega_i} (K) $ is contained in the subvariety $F_i$. 
Let $\mathcal N$ be the thick subcategory of  $\D^b_{\G}(\mbf E_{\Omega})$ generated by $\mathcal N_i$ for all $i\in I$.
We define 
\[
\mathscr D^b_{\G} (\mbf E_{\Omega}) = \D^b_{\G}(\mbf E_{\Omega})/ \mathcal N
\]
to be the localization of $\D^b_{\G} (\mbf E_{\Omega})$ with respect to the thick subcategory $\mathcal N$ and 
\[
Q: \D^b_{\G}(\mbf E_{\Omega}) \to  \mathscr D^b_{\G}(\mbf E_{\Omega}) 
\]
the localization functor. See ~\cite{Verdier76} and ~\cite{Li10b} for the details of the localization. 
The localization functor $Q$ admits a fully faithful  right  and left adjoint $Q_*$ and  $Q_!$, respectively, as was shown in ~\cite{Li10b}.

Let $\Omega'$ be another orientation. Let $\mathcal N_{\Omega'}$ be the thick subcategory defined in the same way as $\mathcal N$.
One has, by definition, $\Phi_{\Omega}^{\Omega'} (\mathcal N) = \mathcal N_{\Omega'}$, where $\Phi_{\Omega}^{\Omega'}$ is defined in (\ref{Fourier-functor}).  
So we have an equivalence, 
induced by $\Phi_{\Omega}^{\Omega'}$,
\begin{align}
\label{Fourier-category}
\Phi_{\Omega}^{\Omega'}: \mathscr D^b_{\G}(\mbf E_{\Omega})  \to \mathscr D^b_{\G}(\mbf E_{\Omega'}).
\end{align}

Similarly, one can define the category $\DD^b_{G_D\times G_V}(\mbf E_{\Omega}(D, V))$,
$\DD^b_{\mbf H} (\mbf E_{\Omega} (D, V^1, V^2, V^3))$
and the equivalence of categories $\Phi_{\Omega}^{\Omega'}: \DD^b_{G_D\times G_V}(\mbf E_{\Omega}(D, V))\to \DD^b_{G_D\times G_V}(\mbf E_{\Omega'}(D, V))$ and
\[
\Phi_{\Omega}^{\Omega'}: \mathscr D^b_{\mbf H}(\mbf E_{\Omega}(D, V^1, V^2, V^3))  \to \mathscr D^b_{\mbf H}(\mbf E_{\Omega'}(D, V^1, V^2, V^3)).
\]

\subsection{Convolution product}
\label{convolution}

Let 
\[
p_{ij} : \mbf E_{\Omega} (D, V^1, V^2, V^3)  \to \mbf E_{\Omega} (D, V^i, V^j), 
\]
denote the projection to the $(i, j)$-components. The groups $\mbf H$ and $\mbf G$  in  section ~\ref{framed}
act on  the spaces $\mbf E_{\Omega}$'s,  and 
it is clear that  $p_{ij}$ is compatible with the group actions.
So we have a morphism of algebraic stacks:
\[
Qp_{ij} : [\mbf H\backslash  \mbf E_{\Omega} (D, V^1, V^2, V^3)]   \to [\G\backslash \mbf E_{\Omega} (D, V^i, V^j)].
\]
From ~\cite{LMB00}, ~\cite{LO08a}-\cite{LO09}, we have the following functors:
\begin{align}
\begin{split}
&Qp_{ij}^*: \D^b_{\G}(\mbf E_{\Omega}(D, V^i, V^j)) \to \D^b_{\mbf H} (\mbf E_{\Omega}(D, V^1, V^2, V^3));\\
&(Qp_{ij})_!: \D^-_{\mbf H}(\mbf E_{\Omega}(D, V^1, V^2, V^3)) \to \D^-_{\G}(\mbf E_{\Omega}(D, V^i, V^j)).
\end{split}
\end{align}
We set
\begin{align}
\label{convolution-functors}
\begin{split}
&P_{ij}^* = Q\circ Qp_{ij}^* \circ Q_! : 
\mathscr D^b_{\G}(\mbf E_{\Omega}(D, V^i, V^j)) \to \mathscr D^b_{\mbf H} (\mbf E_{\Omega}(D, V^1, V^2, V^3));\\
& P_{ij!}= Q \circ (Qp_{ij})_!  \circ Q_!: \mathscr D^-_{\mbf H}(\mbf E_{\Omega}(D, V^1, V^2, V^3)) \to \mathscr D^-_{\G}(\mbf E_{\Omega}(D, V^i, V^j)).
\end{split}
\end{align}

To any objects $K \in \mathscr D^-_{\G}(\mbf E_{\Omega} (D, V^1, V^2))$ and $L \in \mathscr D^-_{\G}(\mbf E_{\Omega}(D, V^2, V^3))$, associated
\begin{equation}
\label{cdot}
K \cdot L= P_{13!} (P^*_{12}(K) \otimes P^*_{23}(L)) \quad \in \mathscr D^-_{\G}(\mbf E_{\Omega}(D, V^1, V^3)).
\end{equation}
If, in addition, $M \in \mathscr D^-_{\G}(\mbf E_{\Omega} (D, V^3, V^4))$, we have $(K\cdot L)\cdot M = K \cdot (L\cdot M)$, by ~\cite[Prop. 4.10]{Li10b}.

Let $\Omega'$ be another orientation of the graph $\Gamma$. We can define a similar convolution product, denoted by 
$\cdot_{\Omega'}$, on $\DD^-_{\G}(\mbf E_{\Omega'}(D, V^i, V^j))$'s.  
The following proposition shows that the convolution products are compatible with the Fourier-Deligne transform.

\begin{prop}
\label{Fourier-convolution}
$\Phi_{\Omega}^{\Omega'} (K\cdot L) = \Phi_{\Omega}^{\Omega'} (K) \cdot_{\Omega'} \Phi_{\Omega}^{\Omega'}(L)$,
for any objects $K \in \mathscr D^-_{\G}(\mbf E_{\Omega} (D, V^1, V^2))$ and $L \in \mathscr D^-_{\G}(\mbf E_{\Omega}(D, V^2, V^3))$.
\end{prop}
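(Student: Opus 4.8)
The plan is to reduce to the non-localized equivariant derived categories and prove the statement there by a base-change argument together with Fourier inversion along the ``middle'' variable $V^2$. Since $\Phi_\Omega^{\Omega'}(\mathcal N)=\mathcal N_{\Omega'}$ one has $\Phi_\Omega^{\Omega'}Q=Q\Phi_\Omega^{\Omega'}$, and hence, by passing to left adjoints (using that $\Phi_\Omega^{\Omega'}$ is an equivalence whose quasi-inverse is again a Fourier transform), $\Phi_\Omega^{\Omega'}Q_!=Q_!\Phi_\Omega^{\Omega'}$. Together with the compatibility of the localization functors with the tensor products (checked from the definitions) and the formulas $P_{ij}^*=Q\,Qp_{ij}^*\,Q_!$, $P_{ij!}=Q\,(Qp_{ij})_!\,Q_!$, this reduces the proposition to proving, in the non-localized categories and up to the constant-sheaf normalizations,
\[
\Phi_\Omega^{\Omega'}\bigl((p_{13})_!(p_{12}^*K\otimes p_{23}^*L)\bigr)\ \cong\ (p_{13}')_!\bigl((p_{12}')^*\Phi_\Omega^{\Omega'}(K)\otimes(p_{23}')^*\Phi_\Omega^{\Omega'}(L)\bigr),
\]
where a prime denotes the $\Omega'$-analogue of the corresponding datum.

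First I would split off the part of $\Phi_\Omega^{\Omega'}$ acting in the $V^1$- and $V^3$-arrow directions. Let $\Psi$ be the partial Fourier transform along the $V^1$- and $V^3$-arrows of $\mbf E_\Omega(D,V^1,V^2,V^3)$, carrying the shift $[r_{13}]$ (which depends only on $V^1$ and $V^3$). Since $p_{13}$ forgets the $V^2$-component, proper base change gives $\Phi_\Omega^{\Omega'}\circ(p_{13})_!=(p_{13})_!\circ\Psi$; and $\Psi\bigl(p_{12}^*K\otimes p_{23}^*L\bigr)\cong p_{12}^*(\Psi_1 K)\otimes p_{23}^*(\Psi_3 L)$ by base change and the projection formula applied in the $V^1$- and $V^3$-directions separately, using that $p_{23}^*L$ is constant in the $V^1$-direction and $p_{12}^*K$ is constant in the $V^3$-direction; here $\Psi_1$ is the Fourier transform along the $V^1$-arrows of $\mbf E_\Omega(D,V^1,V^2)$ with the sign of the term $-u_1$ of $u_{12}$, and $\Psi_3$ the Fourier transform along the $V^3$-arrows of $\mbf E_\Omega(D,V^2,V^3)$ with the sign of the term $+u_3$ of $u_{23}$. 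Thus the left-hand side becomes $(p_{13})_!\bigl(p_{12}^*(\Psi_1 K)\otimes p_{23}^*(\Psi_3 L)\bigr)$, and it remains to match this with the right-hand side, in which $\Phi_\Omega^{\Omega'}(K)=\mathscr F^+(\Psi_1 K)$ and $\Phi_\Omega^{\Omega'}(L)=\mathscr F^-(\Psi_3 L)$, with $\mathscr F^+$, $\mathscr F^-$ the two Fourier transforms along the $V^2$-arrows lying in $\Omega\backslash\Omega'$ (with the signs of the terms $+u_2$ of $u_{12}$ and $-u_2$ of $u_{23}$, respectively).

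The heart of the argument is the $V^2$-direction. Over the $V^2$-arrows common to $\Omega$ and $\Omega'$, and over the $V^2$-framing, nothing happens, so the point is the composite ``tensor the two factors, then integrate over the $V^2$-arrows of $\Omega\backslash\Omega'$''. Fourier transform does not commute with this tensor product — it exchanges $\otimes$ with convolution — so one cannot simply slide $\mathscr F^+$ and $\mathscr F^-$ inside; instead one uses the Plancherel (Fourier-inversion) identity: relatively over all the remaining coordinates, $\int\mathscr A\otimes\mathscr B\cong\int\mathscr F^+(\mathscr A)\otimes\mathscr F^-(\mathscr B)$, up to shift and Tate twist, the first integral taken over the $V^2$-arrows of $\Omega\backslash\Omega'$ and the second over the dual arrows. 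The crucial point is that the $+$ must fall on one factor and the $-$ on the other, which is matched exactly by the term $+u_2$ of $u_{12}$ and the term $-u_2$ of $u_{23}$ — this is precisely why the pairings are normalized as $u_{ij}=-u_i+u_j$, so that the shared index enters with opposite signs. Applying this identity turns $\Psi_1 K$ into $\mathscr F^+(\Psi_1 K)=\Phi_\Omega^{\Omega'}(K)$ and $\Psi_3 L$ into $\mathscr F^-(\Psi_3 L)=\Phi_\Omega^{\Omega'}(L)$, and replaces the integration over the $V^2$-arrows of $\Omega\backslash\Omega'$ by the integration over the dual arrows — which, together with the unchanged integration over the rest of the $V^2$-component, is exactly the $V^2$-direction of the $\Omega'$-convolution. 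This yields the right-hand side.

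The remainder is bookkeeping of shifts and Tate twists, which cancel; the one thing worth recording is that the shift $[r_{13}]$ carried by $\Psi$, together with the two copies of the Fourier normalization along the $V^2$-arrows of $\Omega\backslash\Omega'$ created by the Plancherel step, add up to $[r_{12}]+[r_{23}]$, since $r_{12}+r_{23}-r_{13}$ equals twice the rank of the bundle of those arrows. I expect the main obstacle to be exactly the step just described: one must see that the tensor product and the subsequent integration over $V^2$ have to be treated together as a single operation controlled by Fourier inversion, and keep the sign conventions straight so that the two Fourier transforms along the $V^2$-arrows get created with the correct orientations. As an alternative one may reduce first, by induction on $|\Omega\backslash\Omega'|$, to the case where $\Omega$ and $\Omega'$ differ in a single arrow and argue one arrow at a time; this is the analogue of Lusztig's verification that the Fourier-Deligne transform commutes with the induction functors for Hall algebras.
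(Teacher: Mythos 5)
Your argument is correct and is in substance the paper's own proof, repackaged: the paper likewise isolates the shared $V^2$-direction and resolves it by the orthogonality/Fourier-inversion computation (citing [KW01, p.~44], which is exactly your Plancherel step), gets the local systems $\mathcal L_2$ and $\mathcal L_2^*$ on the two factors from the sign convention $u_{ij}=-u_i+u_j$, and balances the shifts via $r_{12}+r_{23}=r_{13}+2r'$ --- your factorization of $\Phi_{\Omega}^{\Omega'}$ into partial transforms just organizes the same computation so that the left-hand side is converted directly into the right-hand side, where the paper instead reduces both sides to a common expression through explicit cartesian diagrams. The one step you treat more lightly than it deserves is the initial reduction to the non-localized categories (the compatibility of $Q$ and $Q_!$ with the tensor and pushforward steps is not purely formal), but the paper operates at the same level of detail there, relying on the single observation $\iota^* m_{13}^* Q_!=0$ and on base-change identities for the localized functors imported from [Li10b].
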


\begin{proof}
Due to the fact that $m_{13}^*$ is a fully faithful functor and that the condition to define the thick subcategory $\mathcal N$ on 
$\mbf E_{\Omega}$ and $\mbf E_{\Omega\cup \Omega'}$ are the same, one can deduce that $\iota^* m_{13}^* Q_!=0$. From this fact,   
the functor $\Phi_{\Omega}^{\Omega'}: \DD^-_{\G}(\mbf E_{\Omega} (D, V^1, V^3)) \to \DD^-_{\G}(\mbf E_{\Omega'}(D, V^1, V^3))$ can be rewritten as
\[
\Phi_{\Omega}^{\Omega'} (K) =M'_{13!} (M_{13}^* (K) \otimes \mathcal L_{13})[r_{13}].
\]
where  the notations $\mathcal L_{13}$ and $r_{13}$ are defined in ~\ref{Fourier} and $M'_{13!}$ and $M_{13}^*$ are obtained from $m_{13!}'$ and $m_{13}^*$ as in
(\ref{convolution-functors}).
By definition, we have 
\begin{equation*}
\Phi_{\Omega}^{\Omega'} (K\cdot L) =\Phi_{\Omega}^{\Omega'} (P_{13!} (P_{12}^*(K) \otimes P_{23}^* (L)))
=M'_{13!} (M_{13}^* P_{13!} (P_{12}^*(K) \otimes P_{23}^* (L)) \otimes \mathcal L_{13})[r_{13}].
\end{equation*}
Consider the following cartesian diagram
\[
\begin{CD}
\mbf E_{\Omega \cup \Omega'} (D, V^1, V^3) \times \mbf E_{\Omega} (D, V^2) @>s>> \mbf E_{\Omega\cup \Omega'}(D, V^1, V^3)\\
@VrVV @Vm_{13} VV\\
\mbf E_{\Omega}(D, V^1, V^2, V^3) @>p_{13}>> \mbf E_{\Omega}(D, V^1, V^3).
\end{CD}
\]
By an argument similar to ~\cite[(18)]{Li10b}, we have $M_{13}^* P_{13!} =S_! R^*$. So
\begin{equation}
\label{Fourier-convolution-LHS}
\begin{split}
\Phi_{\Omega}^{\Omega'} (K\cdot L)&=M'_{13!} (S_! R^*(P_{12}^*(K) \otimes P_{23}^* (L)) \otimes \mathcal L_{13})[r_{13}]\\
&=M'_{13!} S_! (R^*P_{12}^*(K) \otimes R^* P_{23}^* (L) \otimes S^* \mathcal L_{13})[r_{13}].
\end{split}
\end{equation}

On the other hand, we have
\begin{equation*}
\begin{split}
\Phi_{\Omega}^{\Omega'}(K) & \cdot_{\Omega'} \Phi_{\Omega}^{\Omega'} (L) 
=P'_{13!} ((P'_{12})^* (\Phi_{\Omega}^{\Omega'}(K))\otimes (P'_{23})^* (\Phi_{\Omega}^{\Omega'}(L)))\\
&=P'_{13!} ( (P'_{12})^* M'_{12!} (M_{12}^* (K) \otimes \mathcal L_{12})[r_{12}] \otimes (P'_{23})^* M'_{23!} (M_{23}^*(L)\otimes \mathcal L_{23})[r_{23}]),
\end{split}
\end{equation*}
where  $P_{ij!}'$ and $(P_{ij}')^*$ are obtained from the projections 
$p_{ij}'$ from  $\mbf E_{\Omega'}(D, V^1, V^2, V^3)$ to  $\mbf E_{\Omega'}(D, V^i, V^j)$, $\mathcal L_{ij}$ and $r_{ij}$ are from ~\ref{Fourier}, and
the functors $M'_{ij!}$ and $M_{ij}^*$ are obtained from the projections $m_{ij}$ in ~\ref{Fourier} as $P_{ij!}$ 
and $P_{ij}^*$ from $p_{ij}$ in (\ref{convolution-functors}).
Consider the following cartesian diagrams
\[
\begin{CD}
\mbf E_{\Omega \cup \Omega'} (D, V^1, V^2) \times \mbf E_{\Omega'}(D, V^3) @>s_1' >> \mbf E_{\Omega'} (D, V^1, V^2, V^3) \quad\\
@Vr'_1 VV @V p'_{12} VV \\
\mbf E_{\Omega \cup \Omega'} (D, V^1, V^2) @>m'_{12} >> \mbf E_{\Omega'} (D, V^1, V^2),
\end{CD}
\]
and
\[
\begin{CD}
\mbf E_{\Omega'}(D, V^1)\times \mbf E_{\Omega\cup \Omega'} (D, V^2, V^3) @>s_2'>> \mbf E_{\Omega'}(D, V^1, V^2, V^3)\\
@Vr'_2VV @Vp'_{23} VV\\
\mbf E_{\Omega\cup \Omega'} (D, V^2, V^3) @>m'_{23}>> \mbf E_{\Omega'}(D, V^2, V^3).
\end{CD}
\]
From these cartesian diagrams and similar to ~\cite[(18)]{Li10b}, we have
$(P_{12}')^* M_{12!}' = S'_{1!} (R'_1)^*$ and 
$(P'_{23})^* M'_{23!} = S'_{2!} (R'_2)^*$.
So 
\begin{equation*}
\begin{split}
\Phi&_{\Omega}^{\Omega'}(K)  \cdot_{\Omega'} \Phi_{\Omega}^{\Omega'} (L) =
P'_{13!} ( S'_{1!} (R'_1)^* (M_{12}^* (K) \otimes \mathcal L_{12}) \otimes S'_{2!} (R'_2)^* (M_{23}^*(L)\otimes \mathcal L_{23}))[r_{12}+r_{23}]\\
&=P'_{13!} ( S'_{1!}  ((R'_1)^* M_{12}^* (K) \otimes  (R'_1)^* \mathcal L_{12})\otimes S'_{2!} ((R'_2)^* M_{23}^*(L)\otimes (R'_2)^*( \mathcal L_{23})))[r_{12}+r_{23}]\\
&=P'_{13!} S'_{1!}  ((R'_1)^* M_{12}^* (K) \otimes  (R'_1)^* \mathcal L_{12} \otimes  (S'_1)^*S'_{2!} ((R'_2)^* M_{23}^*(L)\otimes (R'_2)^*( \mathcal L_{23})))[r_{12}+r_{23}].
\end{split}
\end{equation*}
We form the following cartesian diagram
\[
\begin{CD}
\mbf E_{\Omega \cup \Omega'} (D, V^1, V^2,  V^3) 
\times \mbf E^2_{\Omega\backslash \Omega'} (D, V^2)
 @>t'_2>> \mbf E_{\Omega\cup \Omega'}(D, V^1, V^2) \times \mbf E_{\Omega'}(D,V^3)\\
@Vt'_1VV @Vs'_1VV \\
\mbf E_{\Omega'}(D, V^1)\times \mbf E_{\Omega\cup \Omega'} (D, V^2, V^3) @>s'_2>> \mbf E_{\Omega'}(D,V^1, V^2, V^3).
\end{CD}
\]
This cartesian diagram gives rise to the identity, $(S'_1)^* S'_{2!} = T'_{2!} (T'_1)^*$. So
\begin{equation*}
\begin{split}
&\Phi_{\Omega}^{\Omega'}(K)  \cdot_{\Omega'} \Phi_{\Omega}^{\Omega'} (L) =\\
&=P'_{13!} S'_{1!}  ((R'_1)^* M_{12}^* (K) \otimes  (R'_1)^* \mathcal L_{12} \otimes  T'_{2!} (T'_1)^*( (R'_2)^* M_{23}^*(L)\otimes (R'_2)^*( \mathcal L_{23})))[r_{123}]\\
&=P'_{13!} S'_{1!}  T'_{2!} ( (T'_2)^*(R'_1)^* M_{12}^* K  \otimes  (T'_2)^* (R'_1)^* \mathcal L_{12} \otimes(T'_1)^* (R'_2)^* M_{23}^*L\otimes  (T'_1)^*
(R'_2)^* \mathcal L_{23})[r_{123}]\\
&=P'_{13!} S'_{1!}  T'_{2!} 
( (T'_2)^*(R'_1)^* M_{12}^* K \otimes(T'_1)^* (R'_2)^* M_{23}^*L 
\otimes  (T'_2)^* (R'_1)^* \mathcal L_{12}\otimes  (T'_1)^* (R'_2)^* \mathcal L_{23})[r_{123}].
\end{split}
\end{equation*}
where $r_{123}=r_{12}+r_{23}$. Let $\mbf F_1=\mbf E_{\Omega\cup \Omega'}(D, V^1, V^3)$ and 
\[
t'_3: \mbf F\equiv \mbf E_{\Omega \cup \Omega'} (D, V^1, V^2,  V^3) 
\times \mbf E^2_{\Omega\backslash \Omega'} (D, V^2)
 \to \mbf Z\equiv \mbf F_1\times \mbf E_{\Omega}(D, V^2)\times \mbf E^2_{\Omega\backslash \Omega'}(D, V^2)
\]
be the obvious projection.
Note that in the component $\mbf E_{\Omega \cup \Omega'} (D, V^2)$, there is a copy of $\mbf E_{\Omega\backslash \Omega'}(D, V^2)$, 
denoted by $\mbf E^1_{\Omega}(D, V^2)$. 
Observe that  
\[
p'_{13} s'_1 t'_2=w  t'_3, \quad
m_{12} r'_1 t'_2= y_2 t'_3,\quad \mbox{and} \quad 
m_{23} r'_2 t'_1= y_1 t'_3,
\]
where $w$ is the projection from $\mbf Z$ to $\mbf E_{\Omega'}(D, V^1, V^3)$, and $y_1$ and $y_2$ are the projections 
from $\mbf Z$ to  $\mbf E_{\Omega}(D, V^2, V^3)$ (for $y_1$) and $\mbf E_{\Omega}(D, V^1, V^2)$, respectively. 
 Note that there are two choices for the projections for each $y_i$, but we choose the unique one such that the above identities hold.
 So
\begin{equation*}
\begin{split}
\Phi_{\Omega}^{\Omega'}(K)  \cdot_{\Omega'} \Phi_{\Omega}^{\Omega'} (L) 
&=W_! T'_{3!} ( (T'_3)^* Y_2^*  K\otimes (T'_3)^* Y_1^*L \otimes (T'_2)^* (R'_1)^* \mathcal L_{12} \otimes  (T'_1)^* (R'_2)^* \mathcal L_{23})[r_{123}]\\
&=W_! ( Y_2^* K \otimes Y_1^* L \otimes T'_{3!} ((T'_2)^* (R'_1)^* \mathcal L_{12} \otimes  (T'_1)^* (R'_2)^* \mathcal L_{23}))[r_{123}].
\end{split}
\end{equation*}
Let 
$\mbf F_2=\mbf E_{\Omega \cup \Omega'} (D, V^2)\times \mbf E_{\Omega\backslash \Omega'}(D, V^2)$. Thus, 
$\mbf F=\mbf F_1\times \mbf F_2$.
Each component $\mbf E^i_{\Omega}(D, V^2)$ defines a projection,
$\pi_{i,i+1}:\mbf F_2\to \mbf E_{\Omega\cup \Omega'} (D, V^2)$, for any $ i=1, 2$.
We have 
\[
r_1' t_2' =w_1 (1\times \pi_{23}) 
\quad \mbox{and} \quad 
r_2' t_1' =w_2 (1\times \pi_{12}),
\]
where $1: \mbf F_1\to \mbf F_1$ is the identity map and  $w_i$ is the projection of  $\mbf E_{\Omega\cup \Omega'}(D, V^1, V^2, V^3)$ 
to $\mbf E_{\Omega\cup \Omega'}(D, V^i, V^{i+1})$ for any $i=1, 2$. Hence,
\begin{equation*}
\Phi_{\Omega}^{\Omega'}(K)  \cdot_{\Omega'} \Phi_{\Omega}^{\Omega'} (L)=
W_! ( Y_2^* K \otimes Y_1^* L \otimes T'_{3!}
((1\times \Pi_{23})^* W_1^*  \mathcal L_{12} \otimes  (1\times \Pi_{12})^* W_2^* \mathcal L_{23}))[r_{123}].
\end{equation*}
Observe that  $W_1^* \mathcal L_{12} = P^*_1  \mathcal L_1^* \otimes P_2^*  \mathcal L_2$ 
and $W_2^* \mathcal L_{23}=P_3^* \mathcal L_3 \otimes P_2^* \mathcal L_2^*$
where $\mathcal L_i^*$ is the dual of the local system $\mathcal L_i$ in  ~\ref{Fourier}, 
and  $p_i$ are the projections from $\mbf E_{\Omega\cup\Omega'}(D, V^1, V^2,  V^3)$ to $\mbf E_{\Omega\cup \Omega'}(D, V^i)$. So
\begin{equation*}
\begin{split}
T'_{3!}&((1\times \Pi_{23})^* W_1^*  \mathcal L_{12} \otimes  (1\times \Pi_{12})^* W_2^* \mathcal L_{23})=\\
&
=T'_{3!} (((1\times \Pi_{23})^* (P_1^* \mathcal L_1^* \otimes P_2^* \mathcal L_2)  \otimes (1\times \Pi_{12})^*  P_2^*( \mathcal L^*_2 \otimes P_3^*\mathcal L_3))\\
&=T'_{3!} ( (T'_3)^* P^* \mathcal L_{13} \otimes  (1\times \Pi_{23})^*  P_2^* \mathcal L_2\otimes (1\times \Pi_{12})^*  P_2^*( \mathcal L^*_2 ))\\
&=P^* \mathcal L_{13} \otimes T'_{3!} (1\times \Pi_{23})^*  P_2^* \mathcal L_2\otimes (1\times \Pi_{12})^*  P_2^*( \mathcal L^*_2 ))
\end{split}
\end{equation*}
where $P^*$ comes from the projection $p: \mbf Z \to \mbf E_{\Omega\cup \Omega'} (V^1, V^3)$.
By a similar argument as ~\cite[p. 44]{KW01}, we have 
\[
T'_{3!} (1\times \Pi_{23})^*  P_2^* \mathcal L_2\otimes (1\times \Pi_{12})^*  P_2^*( \mathcal L^*_2 ))=
\Delta_! \bar {\mbb Q}_{l, \mbf Z_1} [-2 r'].
\]
where $\Delta_!$ is from the diagonal map 
$\delta: \mbf Z_1\equiv \mbf E_{\Omega\cup \Omega'} (D, V^1, V^3) \times \mbf E_{\Omega}(D, V^2)  \to \mbf Z$ 
and $r'$ is the rank of $t_3'$, which is equal to $\sum_{h\in \Omega\backslash \Omega'} \dim V^2_{h'}\dim V^2_{h''}$. So 
\[
P^* \mathcal L_{13} \otimes T'_{3!} (1\times \Pi_{23})^*  P_2^* \mathcal L_2\otimes (1\times \Pi_{12})^*  P_2^*( \mathcal L^*_2 ))=
P^* \mathcal L_{13} \otimes\Delta_! \bar {\mbb Q}_{l, \mbf Z_1} [-2 r']=
\Delta_! S^* \mathcal L_{13}[-2r'].
\]
Therefore,
\begin{equation}
\label{Fourier-convolution-RHS}
\begin{split}
\Phi_{\Omega}^{\Omega'}(K)  \cdot_{\Omega'} \Phi_{\Omega}^{\Omega'} (L) &=
W_! ( Y_2^* K \otimes Y_1^* L \otimes\Delta_! S^* \mathcal L_{13}) [r_{123}-2r']\\
&=W_! \Delta_! (\Delta^* Y_2^* K\otimes \Delta^* Y_1^* L \otimes S^* \mathcal L_{13}) [r_{13}]\\
&=M'_{13!} S_! (R^*P_{12}^*(K) \otimes R^* P_{23}^* (L) \otimes S^* \mathcal L_{13})[r_{13}],
\end{split}
\end{equation}
where the last identity comes from the observation that $w\delta=m_{13}' s$, $y_2\delta = p_{12} r$ and $y_1 \delta  =p_{23} r$. 
The proposition follows  from (\ref{Fourier-convolution-LHS}) and (\ref{Fourier-convolution-RHS}).
\end{proof}

\subsection{Generator}
Given any pair $(X^1, X^2 ) \in \mbf E_{\Omega}(D, V^1, V^2)$, we write 
\[
\mbox{``}\; X^1\hookrightarrow X^2\;\mbox{''}
\] 
if there exists an $I$-graded inclusion  $\rho: V^1\to V^2$
such that  $\rho_{h''} x^1_h =x^2_h \rho_{h'}, \; q^1_i=q^2_i \rho_i$, for any  $h$ in $\Omega$ and $i$ in $I$.
We also write ``$\rho: X^1\hookrightarrow X^2$''  for such a $\rho$ and ``$X^1\overset{\rho}{\hookrightarrow} X^2$'' for the triple $(X^1, X^2, \rho)$.
Consider the smooth variety
\begin{equation}
\label{Z}
\mbf Z_{\Omega} \equiv \mbf Z_{\Omega}(D, V^1, V^2)=\{ (X^1, X^2, \rho ) | (X^1, X^2) \in \mbf E_{\Omega}(D, V^1, V^2)\; \mbox{and}\; \rho: X^1 \hookrightarrow X^2\}.
\end{equation}
Then we have a diagram
\begin{equation}
\begin{CD}
\mbf E_{\Omega}(D, V^1) @<\pi_1<< \mbf Z_{\Omega}(D, V^1, V^2) @>\pi_2>> \mbf E_{\Omega}(D, V^2),\\
@|  @V\pi_{12}VV @|\\
\mbf E_{\Omega}(D, V^1)@<p_1<< \mbf E_{\Omega}(D, V^1, V^2) @>p_2>> \mbf E_{\Omega}(D, V^2),
\end{CD}
\end{equation}
 where $\pi_1$ and $p_1$ are projections to the first components,  $\pi_2$ and $p_2$ are the projections to the second components,  
 and $\pi_{12}$ is the projection to $(1, 2)$  components.

Similar to $\mbf Z_{\Omega}(D, V^1, V^2)$, let
$\mbf Z^t_{\Omega}(D, V^1, V^2)=\{(X^1, X^2, \rho) | \rho: X^2\hookrightarrow X^1\}$. 
Let $\pi_{12}$ denote the projection $\mbf Z^t_{\Omega}(D, V^1, V^2) \to \mbf E_{\Omega}(D, V^1, V^2)$.
Note that $\mbf Z_{\Omega}(D, V^1, V^2)\simeq \mbf Z^t_{\Omega}(D, V^2, V^1)$.

 We set  the following complexes in $\mathscr D^b_{\G}( \mbf E_{\Omega})$ with $\mu=\lambda-\nu$ and $n\in \mbb N$:
\begin{align*}
\II_{\mu}  &= Q \left (\pi_{12!} (\bar{\mbb Q}_{l,  \mbf Z_{\Omega}}) \right ),  
&&\text{if  $\dim V^1=\dim V^2=\nu$}; \\
\EE^{(n)}_{\mu, \mu-n \alpha_i}  &=Q \left ( \pi_{12!}  (\bar{\mbb Q}_{l, \mbf Z_{\Omega}}) [e_{\mu,n\alpha_i}]  \right ), 
&&\text{if    $\dim V^1=\nu$ and $ \dim V^2=\nu+ ni$};\\ 
\FF^{(n)}_{\mu, \mu +n \alpha_i} &=  Q\left ( \pi_{12!}  (\bar{\mbb Q}_{l, \mbf Z^t_{\Omega}}) [f_{\mu,n\alpha_i}] \right ),
&&\text{if  $\dim V^1=\nu$ and $\dim V^2=\nu-ni$};
\end{align*}
where 
\begin{equation}
\label{coefficient} 
e_{\mu, n\alpha_i}=n \left (d_i +\sum_{h\in \Omega: h'=i} \nu_{h''} -(\nu_i+n) \right )
\quad \mbox{and}\quad  
f_{\mu, n\alpha_i}=n\left ((\nu_i -n)-\sum_{h\in \Omega: h''=i} \nu_{h'}\right ).
\end{equation}
We set $\II_{\mu}$, $\EE^{(n)}_{\mu,\mu-n\alpha_i}$ and $\FF^{(n)}_{\mu,\mu+n\alpha_i}$ to 
be zero if $\mu\in \mbf X$ can not be written as the form $\mu=\lambda-\nu$ for some $\nu\in \mbb N[I]$.
Note that the complexes $\II_{\mu}$, $\EE^{(n)}_{\mu,\mu-n\alpha_i}$ and $\FF^{(n)}_{\mu,\mu+n\alpha_i}$ are denoted by
$\II_{\mu}^{\Omega}$, $\EE^{(n),\Omega}_{\mu,\mu-n\alpha_i}$ and $\FF^{(n),\Omega}_{\mu,\mu+n\alpha_i}$ in ~\cite{Li10b}, respectively.

By  modifying the proof of ~\cite[Lemma 5.6]{Li10b}, one can prove the following lemma.

\begin{lem}
\label{generator-Fourier}
$\Phi_{\Omega}^{\Omega'} (\II_{\mu}) =\II_{\mu}$, 
$\Phi_{\Omega}^{\Omega'} (\EE^{(n)}_{\mu, \mu-n \alpha_i} ) =\EE^{(n)}_{\mu, \mu-n \alpha_i} $
and 
$\Phi_{\Omega}^{\Omega'}(\FF^{(n)}_{\mu, \mu +n \alpha_i}) =\FF^{(n)}_{\mu, \mu +n \alpha_i}$
where the elements on the right-hand sides are complexes in $\DD^b_{\G}(\mbf E_{\Omega'})$ defined in the similar way as the complexes on the left-hand sides.
\end{lem}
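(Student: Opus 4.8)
The plan is to adapt the proof of \cite[Lemma 5.6]{Li10b}, following the same pattern as the proof of Proposition~\ref{Fourier-convolution} above, carrying the framing datum $D$ along for the ride. Because the functor $\Phi_{\Omega}^{\Omega'}$ on the localized categories is induced from the one on $\D^b_{\G}$ (via $\Phi_{\Omega}^{\Omega'}(\mathcal N)=\mathcal N_{\Omega'}$) and because $\II_{\mu}=Q(\pi_{12!}\Q_{l,\mbf Z_{\Omega}})$ and similarly for $\EE^{(n)}$ and $\FF^{(n)}$, it is enough to produce, in $\D^b_{\G}(\mbf E_{\Omega'}(D,V^1,V^2))$, an isomorphism
\[
\Phi_{\Omega}^{\Omega'}(\pi_{12!}\Q_{l,\mbf Z_{\Omega}})\;\cong\;\pi_{12!}\Q_{l,\mbf Z_{\Omega'}}[\,r_{12}-2r'\,],
\]
together with its counterpart with $\mbf Z^t$ in place of $\mbf Z$, where $r'=\sum_{h\in\Omega\backslash\Omega'}\bigl(\dim V^1_{h'}\dim V^1_{h''}+\dim V^2_{h'}\dim V^2_{h''}-\dim V^1_{h'}\dim V^2_{h''}\bigr)$ and $\pi_{12}$ also stands for the $\Omega'$-analogue $\mbf Z_{\Omega'}\to\mbf E_{\Omega'}(D,V^1,V^2)$. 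Applying $Q$ and matching the shift against the coefficients (\ref{coefficient}) then yields the lemma.

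To prove the displayed isomorphism I would first write $\Phi_{\Omega}^{\Omega'}(K)=M'_{12!}(M_{12}^{*}(K)\otimes\mathcal L_{12})[r_{12}]$ as in Proposition~\ref{Fourier-convolution}, put $\widetilde{\mbf Z}_{\Omega}=\mbf Z_{\Omega}\times_{\mbf E_{\Omega}(D,V^1,V^2)}\mbf E_{\Omega\cup\Omega'}(D,V^1,V^2)$ with its projection $\tilde\pi$ to $\mbf E_{\Omega\cup\Omega'}(D,V^1,V^2)$, and use base change for $(-)_!$ and the projection formula to get $\Phi_{\Omega}^{\Omega'}(\pi_{12!}\Q_{l,\mbf Z_{\Omega}})=\sigma_!\tilde\pi^{*}\mathcal L_{12}[r_{12}]$ with $\sigma=m'_{12}\circ\tilde\pi:\widetilde{\mbf Z}_{\Omega}\to\mbf E_{\Omega'}(D,V^1,V^2)$. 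Next I would factor $\sigma$ as $\widetilde{\mbf Z}_{\Omega}\xrightarrow{a}\mathcal Y\xrightarrow{b}\mbf E_{\Omega'}(D,V^1,V^2)$, where $\mathcal Y$ parametrizes an element of $\mbf E_{\Omega'}(D,V^1,V^2)$ together with an $I$-graded inclusion $\rho:V^1\hookrightarrow V^2$ intertwining the arrows in $\Omega\cap\Omega'$ and the framing maps, $a$ forgets the arrows $x_h$ ($h\in\Omega\backslash\Omega'$), and $b$ forgets $\rho$. One checks that $a$ is a vector bundle whose fibre over the point with inclusion $\rho$ is $\bigoplus_{h\in\Omega\backslash\Omega'}L^{(h)}_{\rho}$, with $L^{(h)}_{\rho}=\{(x^1_h,x^2_h)\mid\rho_{h''}x^1_h=x^2_h\rho_{h'}\}$ of dimension independent of $\rho$ (using that $\rho$ is injective), the dimensions summing to $r'$; and that $\tilde\pi^{*}\mathcal L_{12}$ restricts on each such fibre to the pullback of $\mathcal L_{\chi}$ along the linear form $(x_h)_h\mapsto\sum_{h}\bigl(-\mrm{tr}(x^1_hx^1_{\bar h})+\mrm{tr}(x^2_hx^2_{\bar h})\bigr)$.

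The heart of the matter is then $a_!\tilde\pi^{*}\mathcal L_{12}$. By the elementary fact that $H^{*}_c(\mathbb A^{N},\ell^{*}\mathcal L_{\chi})$ is $\Q_l[-2N]$ when the linear form $\ell$ vanishes and is $0$ otherwise --- exactly the step yielding $\Delta_!\bar{\mbb Q}_{l}[-2r']$ near the end of the proof of Proposition~\ref{Fourier-convolution}, cf.\ \cite[p.~44]{KW01}, and with trivial Tate twists since $k$ is algebraically closed --- this reduces to finding the locus in $\mathcal Y$ on which the linear form above vanishes on $\bigoplus_{h}L^{(h)}_{\rho}$. Decomposing each $x^2_h$ with respect to a splitting $V^2_{h'}=\rho_{h'}(V^1_{h'})\oplus C_{h'}$ shows that locus to be $\{\rho_{h'}x^1_{\bar h}=x^2_{\bar h}\rho_{h''}$ and $\mrm{im}(x^2_{\bar h})\subseteq\mrm{im}(\rho_{h'})$ for all $h\in\Omega\backslash\Omega'\}$. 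The key point is that for $\II_{\mu}$, $\EE^{(n)}_{\mu,\mu-n\alpha_i}$ and $\FF^{(n)}_{\mu,\mu+n\alpha_i}$ the cokernel of $\rho$ is concentrated at the single node $i$, so $\rho_{h'}$ is an isomorphism whenever $h'\neq i$ and $\rho_{h''}$ is an isomorphism whenever $h''\neq i$; as $\Gamma$ is loop-free at most one of $h',h''$ equals $i$, and therefore the condition $\mrm{im}(x^2_{\bar h})\subseteq\mrm{im}(\rho_{h'})$ is automatic --- trivially so when $h'\neq i$, and when $h'=i$ it follows from $\rho_{h'}x^1_{\bar h}=x^2_{\bar h}\rho_{h''}$ because $\rho_{h''}$ is then invertible. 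Hence the vanishing locus is exactly the set of $\rho$ intertwining all $\Omega'$-arrows and framings, i.e.\ $\mbf Z_{\Omega'}\subseteq\mathcal Y$, and $b$ restricts on it to $\pi_{12}$; applying $b_!$ and reinstating the shift $[r_{12}]$ gives the displayed isomorphism.

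Finally one matches shifts. If $\dim V^1=\dim V^2$ then $\dim L^{(h)}_{\rho}=\dim V^1_{h'}\dim V^1_{h''}$, so $r_{12}=2r'$ and $\Phi_{\Omega}^{\Omega'}(\II_{\mu})=\II_{\mu}$. For $\EE^{(n)}_{\mu,\mu-n\alpha_i}$, writing $\nu_k=\dim V^1_k$, the summand of $r_{12}-2r'$ indexed by a reversed arrow $h$ is $-n\nu_{h''}$ if $h'=i$, is $n\nu_{h'}$ if $h''=i$, and is $0$ otherwise --- which is precisely the contribution of $h$ to $e_{\mu,n\alpha_i}^{\Omega'}-e_{\mu,n\alpha_i}^{\Omega}$ in (\ref{coefficient}), since reversing $h$ removes $h$ from, or (when $h''=i$) adds $\bar h$ to, the index set $\{h:h'=i\}$. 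The case of $\FF^{(n)}_{\mu,\mu+n\alpha_i}$ is handled identically, working with $\mbf Z^t_{\Omega}$ and with $f_{\mu,n\alpha_i}$ replacing $e_{\mu,n\alpha_i}$ (or, alternatively, via the isomorphism $\mbf Z^t_{\Omega}(D,V^1,V^2)\simeq\mbf Z_{\Omega}(D,V^2,V^1)$). The one real obstacle I foresee is the key point in the previous paragraph: showing that the a priori extra condition $\mrm{im}(x^2_{\bar h})\subseteq\mrm{im}(\rho_{h'})$ --- which would otherwise cut the vanishing locus strictly inside $\mbf Z_{\Omega'}$ --- is forced by the intertwining relation precisely because $\rho$ has cokernel at a single node; this is exactly where the special form of $\II_{\mu}$, $\EE^{(n)}$ and $\FF^{(n)}$ enters, and it is the one place where the argument is not completely formal.
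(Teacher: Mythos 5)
Your proof is correct and is essentially the argument the paper intends: the text only appeals to a modification of \cite[Lemma 5.6]{Li10b}, which is exactly this Fourier--Deligne computation (base change to the incidence variety $\widetilde{\mbf Z}_{\Omega}$, the standard fact that $a_!\ell^*\mathcal L_{\chi}$ is the shifted constant sheaf on the locus where the fiberwise linear form vanishes, and the shift bookkeeping matching $r_{12}-2r'$ against the change in $e_{\mu,n\alpha_i}$, resp.\ $f_{\mu,n\alpha_i}$, between $\Omega$ and $\Omega'$). The point you isolate as non-formal --- that the extra condition $\mrm{im}(x^2_{\bar h})\subseteq \mrm{im}(\rho_{h'})$ on the vanishing locus is automatic because the cokernel of $\rho$ is concentrated at the single vertex $i$ and $\Gamma$ is loop-free, so the locus is exactly $\mbf Z_{\Omega'}$ --- is precisely the ``modification'' the citation alludes to, and your rank and shift computations check out.
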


Similar to the functors $P_{ij!}$ and $P_{ij}^*$, we define ($i=1, 2$)
\begin{equation}
\label{Pi}
\begin{split}
P_{i!} = Q\circ Qp_{i!} \circ Q_! , \;  P_i^* = Q\circ Qp_i^* \circ Q_!; \quad 
\Pi_{i!}= Q\circ Q\pi_{i!} \circ Q_! \; \mbox{and} \;
\Pi_i^*= Q\circ Q\pi_i^*\circ Q_!.
\end{split}
\end{equation}

\subsection{Defining relation} 

We shall show that the complexes $\II_{\mu}$, $\EE^{(n)}_{\mu, \mu-n \alpha_i}$ and $\FF^{(n)}_{\mu, \mu +n \alpha_i}$ satisfy the defining relations of $\dot{\U}$.

\begin{lem}
\label{I-I}
$\II_{\mu}\II_{\mu'} =\delta_{\mu, \mu'} \II_{\mu}$ where $\mu'=\lambda -\nu'$ for any $\nu'\in \mbb N[I]$.
\end{lem}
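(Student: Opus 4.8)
The plan is to compute the convolution product $\II_\mu \II_{\mu'}$ directly from the definition in (\ref{cdot}) and identify the underlying geometry. Since $\II_\mu$ is supported in a fixed dimension vector (namely $\dim V^1 = \dim V^2 = \nu$ with $\mu = \lambda - \nu$), the product $\II_\mu \II_{\mu'}$ only makes sense when the "middle" dimension vectors match, so we may assume $\mu = \lambda - \nu$ and $\mu' = \lambda - \nu'$; if $\nu \neq \nu'$ the relevant space $\mbf E_\Omega(D, V^1, V^2, V^3)$ with $\dim V^2$ forced by both factors is empty, giving the Kronecker delta. So the real content is the case $\mu = \mu'$, where I must show $\II_\mu \II_\mu = \II_\mu$.

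First I would unwind $\II_\mu \II_\mu = P_{13!}(P_{12}^*(\II_\mu) \otimes P_{23}^*(\II_\mu))$, passing through the adjoints $Q_!$, $Q$ as in (\ref{convolution-functors}). The key observation is that $\pi_{12!}(\Q_{l,\mbf Z_\Omega})$ has a geometric model: pulling back and tensoring amounts to taking the space of triples $(X^1, X^2, X^3)$ with inclusions $\rho_{12}: X^1 \hookrightarrow X^2$ and $\rho_{23}: X^2 \hookrightarrow X^3$, then pushing forward to $\mbf E_\Omega(D, V^1, V^3)$. Because here $\dim V^1 = \dim V^2 = \dim V^3 = \nu$, \emph{every} inclusion $\rho: X^1 \hookrightarrow X^2$ between representations of the same dimension is an isomorphism. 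Hence the composite space of such triples is isomorphic, via $(X^1,X^2,X^3,\rho_{12},\rho_{23}) \mapsto (X^1, X^3, \rho_{23}\rho_{12})$ together with the data of $X^2$ determined up to the isomorphism $\rho_{12}$, to a space fibered over $\mbf Z_\Omega(D, V^1, V^3)$ with contractible (affine-space) fibers — the fiber recording the choice of the intermediate isomorphism $\rho_{12}$, which forms a torsor under a unipotent-by-$G_{V^2}$ group whose class in the equivariant derived category is trivial after the shift bookkeeping works out. Thus the pushforward is identified with $\pi_{12!}(\Q_{l,\mbf Z_\Omega(D,V^1,V^3)})$, i.e. $\II_\mu$ again.

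Concretely the steps are: (1) reduce to $\mu = \mu'$ by the emptiness/dimension argument for the Kronecker delta; (2) form the iterated fiber product $\mbf Z_\Omega(D,V^1,V^2) \times_{\mbf E_\Omega(D,V^2)} \mbf Z_\Omega(D,V^2,V^3)$ and use base change (as in the proof of Proposition \ref{Fourier-convolution}, invoking the cartesian squares and the identity $M^*_{13} P_{13!} = S_! R^*$ style manipulations) to realize $P_{12}^*(\II_\mu)\otimes P_{23}^*(\II_\mu)$ as the (localized image of the) pushforward of the constant sheaf from this fiber product; (3) observe that on this fiber product $\rho_{12}$ is automatically an isomorphism, so the map to $\mbf Z_\Omega(D,V^1,V^3)$ sending $(X^1,X^2,X^3,\rho_{12},\rho_{23}) \mapsto (X^1,X^3,\rho_{23}\rho_{12})$ is (equivariantly) a fibration with fibers isomorphic to the orbit of choices of $X^2$ — an affine space — so the constant sheaf pushes forward to the constant sheaf up to a shift; (4) check the shift is zero, which is automatic since $\II_\mu$ carries no shift (contrast with $\EE^{(n)}$, $\FF^{(n)}$) and the affine-space fiber dimension is matched by the $\pi_{12}$ pushforward bookkeeping; (5) apply $P_{13!} = Q \circ (Qp_{13})_! \circ Q_!$ and conclude $\II_\mu \II_\mu = Q(\pi_{12!}(\Q_{l,\mbf Z_\Omega(D,V^1,V^3)})) = \II_\mu$.

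The main obstacle I anticipate is step (3)–(4): carefully handling the equivariant derived category pushforward along the fibration by $G_{V^2}$-torsors and confirming that no nontrivial shift or Tate twist survives. One must be precise that the map collapsing the intermediate $X^2$ is smooth with connected affine fibers (so that $f_! \Q_l$ on the source is $\Q_l$ on the target, shifted by twice the relative dimension, and that the relative dimension is exactly compensated). This is the same mechanism that makes $\pi_{12}$ in the definition of $\II_\mu$ a small/semismall-type map, and it is essentially the monoidal-unit computation for a convolution category; the bookkeeping of the localization functors $Q$, $Q_!$ is routine given their adjointness and the compatibility already established, but it must be written down with care. I expect the argument to closely parallel, and borrow the diagram-chasing technology from, the proof of Proposition \ref{Fourier-convolution} above.
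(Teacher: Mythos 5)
Your overall skeleton is the same as the paper's: reduce the Kronecker delta to the mismatch of the middle dimension vector, use base change and the projection formula to rewrite $P_{12}^*(\II_\mu)\otimes P_{23}^*(\II_\mu)$ as the pushforward of the constant sheaf from the space $\mbf Z$ of triples $X^1\overset{\rho_1}{\hookrightarrow}X^2\overset{\rho_2}{\hookrightarrow}X^3$, and then collapse the middle term via $t:(X^1,X^2,X^3,\rho_1,\rho_2)\mapsto (X^1,X^3,\rho_2\rho_1)$. But your justification of the collapsing step --- the heart of the lemma --- is wrong. The fiber of $t$ over a point of $\mbf Z_{\Omega}(D,V^1,V^3)$ is the set of graded isomorphisms $\rho_1:V^1\to V^2$ (which then determine $X^2$ by transport of structure and $\rho_2=\rho\rho_1^{-1}$); this is a torsor under $G_{V^2}=\prod_i\mrm{GL}(V^2_i)$, not an affine space, not contractible, and there is no ``unipotent-by-$G_{V^2}$'' group in sight. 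Consequently the mechanism you invoke in steps (3)--(4) --- ``smooth with connected affine fibers, so $t_!\Q_l$ is $\Q_l$ shifted by twice the relative dimension, and the shift is exactly compensated'' --- does not apply: non-equivariantly $t_!\Q_{l,\mbf Z}$ would pick up the compactly supported cohomology of products of general linear groups and is not a shifted constant sheaf, and nothing compensates anything. Your step (4), ``the shift is zero, which is automatic since $\II_\mu$ carries no shift,'' is circular rather than a proof.

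The correct argument, and the one the paper uses, is genuinely equivariant: $G_{V^2}$ acts freely on $\mbf Z$ and $t$ is precisely the quotient map by this action, so the induced morphism of quotient stacks $[\mbf H\backslash \mbf Z]\to[\G\backslash \mbf Z_{\Omega}(D,V^1,V^3)]$ is an isomorphism. Since $P_{13!}$ passes from $\mbf H$-equivariant to $\G$-equivariant complexes, this immediately gives $T_!(\Q_{l,\mbf Z})=\Q_{l,\mbf Z_{\Omega}(D,V^1,V^3)}$ on the nose, with no shift and no Tate twist, whence $\II_\mu\cdot\II_\mu=\Pi_{12!}(\Q_{l,\mbf Z_{\Omega}(D,V^1,V^3)})=\II_\mu$. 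You brush against this in your final paragraph (``$G_{V^2}$-torsors'', ``equivariantly''), but then restate the false affine-fiber mechanism, so as written the key step has a genuine gap; replacing it with the quotient-stack identification repairs the proof and turns it into the paper's.
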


\begin{proof}
Assume that $V^1=V^2=V^3$ has dimension $\nu$. Consider the following cartesian diagram
\[
\begin{CD}
\mbf Z_{\Omega}(D, V^1, V^2) @< << \mbf Z_1 @>s_1>> \mbf Z\\
@V\pi_{12}VV @Vr_1VV @Vs_2VV\\
\mbf E_{\Omega}(D, V^1, V^2) @<p_{12}<< \mbf E_{\Omega}(D, V^1, V^2, V^3) @< r_2<< \mbf Z_2,
\end{CD}
\]
where $\mbf Z_1=\mbf Z_{\Omega}(D, V^1, V^2) \times \mbf E_{\Omega}(D, V^3)$, $\mbf Z_2=\mbf E_{\Omega}(D, V^1)\times \mbf Z_{\Omega}(D, V^2, V^3)$,
\[
\mbf Z=\mbf Z_1\times_{\mbf E_{\Omega} (D, V^1, V^2, V^3)} \mbf Z_2= \{(X^1, X^2, X^3, \rho_1, \rho_2) | 
X^1\overset{\rho_1}{\hookrightarrow} X^2 \overset{\rho_2}{\hookrightarrow} X^3\}.
\]
and  the  morphisms  are the obvious projections. 
The cartesian square on the left gives rise to the identity 
$P_{12}^* \Pi_{12!} (\bar{\mbb Q}_{l,\mbf Z_{\Omega}(D, V^1, V^2)})= R_{1!} (\bar{\mbb Q}_{l, \mbf Z_1})$.
Similarly, $P_{23}^* \Pi_{12!} (\bar{\mbb Q}_{l,\mbf Z_{\Omega}(D, V^2, V^3)})= R_{2!} (\bar{\mbb Q}_{l, \mbf Z_2})$. 
So 
\begin{equation*}
\begin{split}
\II_{\mu} \cdot \II_{\mu} & = P_{13!} (P_{12}^* (\II_{\mu}) \otimes P_{23}^* (\II_{\mu})) 
= P_{13!}(R_{1!} (\Q_{l, \mbf Z_1})\otimes R_{2!} (\Q_{l, \mbf Z_2}))\\
&=P_{13!} R_{1!} (\Q_{l, \mbf Z_1}\otimes R_1^* R_{2!} (\Q_{l, \mbf Z_2}))
=P_{13!} R_{1!} (R_1^* R_{2!} (\Q_{l, \mbf Z_2}) ).
\end{split}
\end{equation*}
The right cartesian square in the above diagram   implies that $R_1^* R_{2!} = S_{1!} S_2^*$.
Thus
\[
\II_{\mu} \cdot \II_{\mu}=P_{13!} R_{1!} (R_1^* R_{2!} (\Q_{l, \mbf Z_2}) ) =
P_{13!} R_{1!}S_{1!} S_2^* (\Q_{l, \mbf Z_2})= P_{13!} R_{1!}S_{1!} (\Q_{l, \mbf Z}).
\]
Consider the following commutative  diagram 
\[
\begin{CD}
\mbf Z @>t>> \mbf Z_{\Omega}(D, V^1, V^3)\\
@Vr_1s_1VV @V\pi_{12}VV \\
\mbf E_{\Omega}(D, V^1, V^2. V^3) @> p_{13}>> \mbf E_{\Omega}(D, V^1, V^3),
\end{CD}
\]
where $t$ sends $X^1\overset{\rho_1}{\hookrightarrow} X^2 \overset{\rho_2}{\hookrightarrow} X^3$ to $X^1\overset{\rho_2 \rho_1}{\hookrightarrow} X^3$. Then we have
\[
\II_{\mu} \cdot \II_{\mu}=P_{13!} R_{1!}S_{1!} (\Q_{l, \mbf Z})= \Pi_{12!} T_! (\Q_{l, \mbf Z}).
\]
Observe that $t$ is the quotient map of $\mbf Z$ by the group  $G_{V^2}$, thus the induced morphism $Qt: [\mbf H\backslash \mbf Z] \to [\G\backslash \mbf Z_{\Omega}(D, V^1, V^3)]$ is an isomorphism.
Hence $T_! (\Q_{l, \mbf Z})=\Q_{l,\mbf Z_{\Omega}(D, V^1, V^3)}$. Therefore,
\[
\II_{\mu} \cdot \II_{\mu}=\Pi_{12!} (\Q_{l,\mbf Z_{\Omega}(D, V^1, V^3)} )= \pi_{12!} (\Q_{l,\mbf Z_{\Omega}(D, V^1, V^3)} )=\II_{\mu}.
\]
It is clear that $\II_{\mu} \cdot \II_{\mu'}=0$ if $\mu\neq \mu'$ from the above argument. The lemma follows.
\end{proof}

\begin{lem} 
For any $\mu$ and $\mu'$, we have  
\begin{alignat*}{3}
 \EE^{(n)}_{\mu,\mu-n\alpha_i} \II_{\mu'}&=\delta_{\mu-n\alpha_i,\mu'} \EE^{(n)}_{\mu, \mu-n\alpha_i},\quad 
& \II_{\mu'} \EE^{(n)}_{\mu, \mu-n\alpha_i} &=\delta_{\mu', \mu} \EE^{(n)}_{\mu, \mu-n\alpha_i}; \\
\FF^{(n)}_{\mu, \mu +n\alpha_i} \II_{\mu'}&=\delta_{\mu+n\alpha_i,\mu'} \FF^{(n)}_{\mu, \mu+n\alpha_i}, \quad
&\II_{\mu'} \FF^{(n)}_{\mu, \mu+n \alpha_i} &= \delta_{\mu', \mu} \FF^{(n)}_{\mu, \mu+n\alpha_i}.
\end{alignat*}
\end{lem}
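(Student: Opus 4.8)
The plan is to mimic the proof of Lemma~\ref{I-I} essentially line by line, with one or both copies of $\mbf Z_{\Omega}$ replaced by a version with unequal dimensions and, for the two $\FF$-identities, by $\mbf Z^t_{\Omega}$. First I would dispose of the four vanishing statements by a dimension count. If $\mu$ or $\mu'$ is not of the form $\lambda-\nu$ the relevant complexes are $0$ by definition, so assume $\mu=\lambda-\nu$, $\mu'=\lambda-\nu'$. Then $\EE^{(n)}_{\mu,\mu-n\alpha_i}$ is supported on $\mbf E_{\Omega}(D,V^1,V^2)$ with $\dim V^1=\nu$, $\dim V^2=\nu+ni$, while $\II_{\mu'}$ is supported on $\mbf E_{\Omega}(D,V^2,V^3)$ with $\dim V^2=\dim V^3=\nu'$; the convolution $P_{13!}(P_{12}^*(-)\otimes P_{23}^*(-))$ can only be nonzero when the two middle $I$-graded spaces agree, i.e. $\nu+ni=\nu'$, equivalently $\mu'=\mu-n\alpha_i$, which is the content of $\delta_{\mu-n\alpha_i,\mu'}$. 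The same argument produces the other three Kronecker deltas: $\II_{\mu'}\,\EE^{(n)}_{\mu,\mu-n\alpha_i}$ forces $\nu'=\nu$, hence $\mu'=\mu$; $\FF^{(n)}_{\mu,\mu+n\alpha_i}\,\II_{\mu'}$ forces $\nu'=\nu-ni$, hence $\mu'=\mu+n\alpha_i$; and $\II_{\mu'}\,\FF^{(n)}_{\mu,\mu+n\alpha_i}$ forces $\mu'=\mu$.

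For the nonzero case of the first identity I would take $\dim V^1=\nu$, $\dim V^2=\dim V^3=\nu+ni$ and form exactly the cartesian diagram used for Lemma~\ref{I-I}, with $\mbf Z_1=\mbf Z_{\Omega}(D,V^1,V^2)\times\mbf E_{\Omega}(D,V^3)$, $\mbf Z_2=\mbf E_{\Omega}(D,V^1)\times\mbf Z_{\Omega}(D,V^2,V^3)$, and $\mbf Z=\{(X^1,X^2,X^3,\rho_1,\rho_2)\mid X^1\overset{\rho_1}{\hookrightarrow}X^2\overset{\rho_2}{\hookrightarrow}X^3\}$. Base change gives $P_{12}^*\Pi_{12!}(\Q_{l,\mbf Z_{\Omega}(D,V^1,V^2)}[e_{\mu,n\alpha_i}])=R_{1!}(\Q_{l,\mbf Z_1}[e_{\mu,n\alpha_i}])$ and $P_{23}^*\Pi_{12!}(\Q_{l,\mbf Z_{\Omega}(D,V^2,V^3)})=R_{2!}(\Q_{l,\mbf Z_2})$, and then, exactly as in Lemma~\ref{I-I}, $\EE^{(n)}_{\mu,\mu-n\alpha_i}\,\II_{\mu-n\alpha_i}=\Pi_{12!}T_!(\Q_{l,\mbf Z}[e_{\mu,n\alpha_i}])$, where $T$ is induced by the collapsing map $(X^1,X^2,X^3,\rho_1,\rho_2)\mapsto(X^1,X^3,\rho_2\rho_1)$ onto $\mbf Z_{\Omega}(D,V^1,V^3)$. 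The new input is that since $\dim V^2=\dim V^3$, every $\rho_2$ is an isomorphism, so this collapsing map is again the quotient of $\mbf Z$ by $G_{V^2}$, the induced morphism of stacks is an isomorphism, and hence $T_!(\Q_{l,\mbf Z})=\Q_{l,\mbf Z_{\Omega}(D,V^1,V^3)}$. The shift $[e_{\mu,n\alpha_i}]$ is carried through unchanged because $\II_{\mu-n\alpha_i}$ has shift $0$ and $e_{\mu,n\alpha_i}$ is computed in (\ref{coefficient}) from $\dim V^1=\nu$, the same on both sides. Thus $\EE^{(n)}_{\mu,\mu-n\alpha_i}\,\II_{\mu-n\alpha_i}=\Pi_{12!}(\Q_{l,\mbf Z_{\Omega}(D,V^1,V^3)}[e_{\mu,n\alpha_i}])=\EE^{(n)}_{\mu,\mu-n\alpha_i}$.

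The remaining three identities are handled the same way. For $\II_{\mu}\,\EE^{(n)}_{\mu,\mu-n\alpha_i}$ take $\dim V^1=\dim V^2=\nu$, $\dim V^3=\nu+ni$; now $\rho_1$ is the dimension-forced isomorphism, the collapsing map is again a $G_{V^2}$-quotient, and $e_{\mu,n\alpha_i}$ computed from the source $V^2$ of the $\EE$-factor matches the shift of the output by (\ref{coefficient}). For the two $\FF$-identities, replace the appropriate copy of $\mbf Z_{\Omega}$ by $\mbf Z^t_{\Omega}$ (so one of the two inclusions in $\mbf Z$ points the other way), use $f_{\mu,n\alpha_i}$ in place of $e_{\mu,n\alpha_i}$, and again use that the dimension-forced inclusion becomes an isomorphism, making the collapsing morphism the quotient by $G_{V^2}$. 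I expect the only real work to be bookkeeping: tracking, in each of the four nonzero convolutions, which of $\rho_1,\rho_2$ is forced to be an isomorphism; checking that the collapsing morphism is genuinely the quotient of the fibre product by $G_{V^2}$, so the constant sheaf pushes forward to the constant sheaf; and confirming that the degree-shift label attached by (\ref{coefficient}) to the $\EE^{(n)}$- (resp. $\FF^{(n)}$-) factor inside the convolution agrees with the label on the result. None of this requires an idea beyond the proof of Lemma~\ref{I-I}.
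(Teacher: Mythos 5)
Your proposal is exactly the paper's intended argument: the paper proves this lemma by simply remarking that it "can be proved in exactly the same way as the proof of Lemma \ref{I-I}", and your line-by-line adaptation (dimension-forced deltas, the dimension-forced isomorphism making the collapsing map a $G_{V^2}$-quotient, and the shift bookkeeping via (\ref{coefficient})) is precisely that adaptation. Correct, and same approach.
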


This lemma  can be  proved in exactly the same way as the proof of Lemma ~\ref{I-I}.

\begin{lem}
\label{E-i-j}
$\EE_{\mu -\alpha_j+\alpha_i,\mu -\alpha_j} \FF_{\mu -\alpha_j,\mu}=  \FF_{\mu +\alpha_i-\alpha_j,\mu +\alpha_i} \EE_{\mu+\alpha_i,\mu} $, for any $i\neq j$.
\end{lem}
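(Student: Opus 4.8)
The plan is to expand both sides into direct images of constant sheaves from correspondence varieties, exactly along the lines of the proof of Lemma~\ref{I-I}, and then to identify the two correspondences through a common ``bridge'' variety; this is the geometric avatar of the relation $E_iF_j=F_jE_i$ for $i\neq j$. The hypothesis $i\neq j$ enters at one point only: a dimension count guaranteeing that certain intersections, sums and kernels of subrepresentations have \emph{constant} graded rank, so that they organize into honest vector bundles.

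Write $\mu=\lambda-\nu$ and fix $I$-graded spaces $V^1,V^3$ with $\dim V^1=\nu+j-i$ and $\dim V^3=\nu$; both sides of the asserted identity live in $\mathscr D^b_{\G}(\mbf E_{\Omega}(D,V^1,V^3))$. Running the base-change and projection-formula computation of Lemma~\ref{I-I} through the cartesian squares relating $\mbf E_{\Omega}(D,V^1,V^2,V^3)$, with $\dim V^2=\nu+j$, to its two-factor projections — using that the pullbacks involved carry a constant sheaf to a constant sheaf, while the $\EE$-factor contributes $\mbf Z_{\Omega}$ and the $\FF$-factor contributes $\mbf Z^t_{\Omega}$ — one obtains
\[
\EE_{\mu-\alpha_j+\alpha_i,\mu-\alpha_j}\cdot\FF_{\mu-\alpha_j,\mu}=A_!\big(\Q_{l,\mbf Y_L}\big)[c_L],
\]
where $\mbf Y_L=\{(X^1,X^2,X^3,\rho_1,\rho_2)\mid X^1\overset{\rho_1}{\hookrightarrow}X^2,\ X^3\overset{\rho_2}{\hookrightarrow}X^2\}$, the functor $A_!$ is the localized direct image along the projection $a\colon\mbf Y_L\to\mbf E_{\Omega}(D,V^1,V^3)$ forgetting $X^2,\rho_1,\rho_2$, and $c_L=e_{\mu-\alpha_j+\alpha_i,\alpha_i}+f_{\mu-\alpha_j,\alpha_j}$ — no further shift appears, since the convolution product uses only pullbacks, direct images and a tensor factor which is the constant sheaf on the whole ambient space. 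Symmetrically, the right-hand side equals $B_!(\Q_{l,\mbf Y_R})[c_R]$, where $\mbf Y_R=\{(X^1,X^4,X^3,\sigma_1,\sigma_2)\mid X^4\overset{\sigma_1}{\hookrightarrow}X^1,\ X^4\overset{\sigma_2}{\hookrightarrow}X^3\}$ with $\dim V^4=\nu-i$, $B_!$ is the direct image along the analogous projection $b$, and $c_R=e_{\mu+\alpha_i,\alpha_i}+f_{\mu+\alpha_i-\alpha_j,\alpha_j}$.

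Introduce now the variety
\[
\mbf Y=\{(X^1,X^2,X^3,X^4,\rho_1,\rho_2,\rho_3,\rho_4)\mid X^4\overset{\rho_1}{\hookrightarrow}X^1\overset{\rho_3}{\hookrightarrow}X^2,\ X^4\overset{\rho_2}{\hookrightarrow}X^3\overset{\rho_4}{\hookrightarrow}X^2,\ \rho_3\rho_1=\rho_4\rho_2\},
\]
with its evident action of $\mbf H_{\mbf Y}=G_D\times G_{V^1}\times G_{V^2}\times G_{V^3}\times G_{V^4}$ and the two projections $\mbf Y\to\mbf Y_L$ (forget $X^4,\rho_1,\rho_2$) and $\mbf Y\to\mbf Y_R$ (forget $X^2,\rho_3,\rho_4$). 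Here the dimension count is used: $V^2/\rho_3(V^1)$ is one-dimensional, concentrated at the vertex $i$, whereas $V^2/\rho_4(V^3)$ is one-dimensional, concentrated at the vertex $j$; since $i\neq j$ it follows that at \emph{every} point of $\mbf Y_L$ one has $\rho_3(V^1)+\rho_4(V^3)=V^2$ and the subrepresentation $\rho_3(V^1)\cap\rho_4(V^3)$ of $X^2$ has graded dimension exactly $\nu-i$; dually, over $\mbf Y_R$ the homomorphism $(\rho_1,-\rho_2)\colon V^4\to V^1\oplus V^3$ has constant rank $\nu-i$, with cokernel of graded dimension $\nu+j$. Consequently, over $\mbf Y_L$ the subspaces $\rho_3(V^1)\cap\rho_4(V^3)$ form a vector subbundle $\mathcal W$ of the trivial bundle with fibre $V^2$, and chasing the definitions shows that a point of $\mbf Y$ lying over a given point of $\mbf Y_L$ is the same datum as an $I$-graded isomorphism of $V^4$ onto the corresponding fibre of $\mathcal W$ (which then forces $X^4,\rho_1$ and $\rho_2$); thus $\mbf Y\to\mbf Y_L$ is a $G_{V^4}$-torsor, and $[\mbf H_{\mbf Y}\backslash\mbf Y]\to[\mbf H_L\backslash\mbf Y_L]$, with $\mbf H_L=G_D\times G_{V^1}\times G_{V^2}\times G_{V^3}$, is an isomorphism of stacks over $[\G\backslash\mbf E_{\Omega}(D,V^1,V^3)]$. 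The same argument — now with the quotient bundle $(V^1\oplus V^3)/(\rho_1,-\rho_2)(V^4)$ over $\mbf Y_R$ — shows that $\mbf Y\to\mbf Y_R$ is a $G_{V^2}$-torsor and that $[\mbf H_{\mbf Y}\backslash\mbf Y]\to[\mbf H_R\backslash\mbf Y_R]$, with $\mbf H_R=G_D\times G_{V^1}\times G_{V^4}\times G_{V^3}$, is an isomorphism of stacks over the same base. Therefore $A_!(\Q_{l,\mbf Y_L})$ and $B_!(\Q_{l,\mbf Y_R})$ both agree with the direct image of $\Q_{l,\mbf Y}$ along $\mbf Y\to\mbf E_{\Omega}(D,V^1,V^3)$, hence with one another.

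It remains to check $c_L=c_R$. Substituting into (\ref{coefficient}) — taking $\nu_*=\nu+j-i,\ \nu+j,\ \nu-i,\ \nu-i+j$ in the four summands respectively, and using that $\Gamma$ is loop-free so that $\#\{h\in\Omega\mid h'=k,\ h''=k\}=0$ — every term produces a cancellation against $\nu_i$ or $\nu_j$, and one finds
\[
c_L=d_i+\sum_{h\in\Omega:\,h'=i}\nu_{h''}-\nu_i+\nu_j-\sum_{h\in\Omega:\,h''=j}\nu_{h'}+\#\{h\in\Omega\mid h'=i,\ h''=j\}=c_R.
\]
Applying $Q[c_L]$ to the identification of direct images from the previous paragraph yields the lemma. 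The main obstacle is the middle step: one must establish carefully that the tautological ``intersection'' and ``pushout'' constructions glue into genuine vector bundles — this is exactly what the $i\neq j$ dimension count secures, and it fails for $i=j$, where $\rho_3(V^1)\cap\rho_4(V^3)$ jumps in dimension — and that the resulting maps of quotient stacks are honest isomorphisms compatible with the projections defining the convolution product. Everything else is bookkeeping modelled on Lemma~\ref{I-I}.
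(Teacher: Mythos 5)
Your proposal is correct, and its skeleton is the same as the paper's: expand both convolutions by base change into shifted direct images of constant sheaves from the correspondence varieties $\mbf Y_L$ ($X^1\hookrightarrow X^2\hookleftarrow X^3$) and $\mbf Y_R$ ($X^1\hookleftarrow X^4\hookrightarrow X^3$), identify the two pushforwards geometrically using the freeness of the $G_{V^2}$- and $G_{V^4}$-actions, and check that the two shifts agree. The one genuine difference is the middle step: the paper identifies the two correspondences through a common \emph{quotient} $\mbf Z$, the variety of quadruples $(X^1,X^3,\check\rho:\check V^1\hookrightarrow\check V^3,\sigma_j:V^3_j\hookrightarrow V^1_j)$, obtained by composing the embeddings with the inverses of their components at the vertices where the graded dimensions coincide (this is where $i\neq j$ enters there), and observes that $\mbf Y_L\to\mbf Z$ and $\mbf Y_R\to\mbf Z$ are quotient maps by $G_{V^2}$ and $G_{V^4}$; you instead pass to a common \emph{cover} $\mbf Y$ of commutative squares, proving it is simultaneously a $G_{V^4}$-torsor over $\mbf Y_L$ (via the intersection $\rho_3(V^1)\cap\rho_4(V^3)$, whose constant graded rank $\nu-i$ is exactly your $i\neq j$ transversality) and a $G_{V^2}$-torsor over $\mbf Y_R$ (via the pushout, whose induced map to $V^2$ is automatically an isomorphism by the same dimension count). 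The two devices are equivalent and carry the same content; the paper's quotient variety is lower-dimensional and makes the descent immediate, while your roof avoids writing down the slightly awkward ``mixed-direction'' quadruples at the cost of the torsor/vector-bundle verifications, which you carry out correctly. One small point in your favor: you verify the equality of shifts $c_L=c_R$ explicitly (and correctly, including the $\#\{h\in\Omega\mid h'=i,h''=j\}$ term coming from loop-freeness), whereas the paper simply asserts ``the fact that $m=m'$.''
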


\begin{proof}
Fix four $I$-graded vector spaces $V^i$ for $i=1, 2, 3, 4$ such that
\begin{equation}
\label{E-F-i-j}
\dim V^1 =\nu +j-i, \quad \dim V^2 =\nu +j, \quad \dim V^3 = \nu \quad\mbox{and} \quad 
\dim V^4 =\nu-i.
\end{equation}
Let 
\begin{equation*}
\mbf Z_1
=\{ (X^1, X^2, X^3, \rho_1, \rho_2) | (X^1, X^2, X^3)\in \mbf E_{\Omega}(D, V^1, V^2, V^3), X^1 \overset{\rho_1}{\hookrightarrow} X^2 \overset{\rho_2}{\hookleftarrow} X^3\},
\end{equation*}
and $\pi_1$ be the projection from $\mbf Z_1$ to $\mbf E_{\Omega}(D, V^1, V^2, V^3)$. 
Then an argument similar to the proof of Lemma ~\ref{I-I} yields that 
\[
\EE_{\mu -\alpha_j+\alpha_i,\mu -\alpha_j} \FF_{\mu -\alpha_j,\mu}=P_{13!} \Pi_{1!} (\Q_{l,\mbf Z_1}) [m],
\]
where
$m=e_{\mu-\alpha_j+\alpha_i, \alpha_i}+f_{\mu-\alpha_j, \alpha_j}$.
Denote by $\check V$ the $I\backslash \{j\}$-graded vector space obtained from $V$ by deleting the component $V_j$. Let 
$\mbf Z$ be the variety of quadruples  $( X^1, X^3,  \check \rho: \check V^1\hookrightarrow  \check V^3, \sigma_j: V^3_j \hookrightarrow V^1_j)$, where  
$(X^1, X^3)\in \mbf E_{\Omega}(D, V^1, V^3)$, such that all the diagrams  incurred in the quadruples are commutative, i.e.,
\[
x^3_h \check \rho_{ h'}=\check \rho_{ h''} x^1_h, \; \mbox{if}\;  \{h', h''\} \neq j;\;
x_h^1 = \sigma_j x^3_h \check \rho_{ h'},\; \mbox{if} \; h''=j;\; \mbox{and}\; 
\sigma_j x^1_h = x^3_h \check \rho_{h'},\; \mbox{if} \; h'=j.
\]
Define a morphism of varieties
\[
r_{13}: \mbf Z_1\to \mbf Z
\]
by $r_{13}(X^1, X^2, X^3, \rho_1, \rho_2) = (X^1, X^3, \check \rho, \sigma_j)$ where 
$\check \rho_{k} = \rho_{2, k}^{-1} \rho_{1, k}$ for any $k\in I\backslash\{ j\}$ and $\sigma_j= \rho_{1, j}^{-1} \rho_{2,j}$.
Then, we have $p_{13} \pi_2= \pi r_{13}$, where $\pi$ is the projection from $\mbf Z$ to $\mbf E_{\Omega}(D, V^1, V^3)$. Moreover,
we observe that $r_{13}$ is a quotient map of $\mbf Z_1$ by $G_{V^2}$. From these facts, we have
\begin{equation}
\label{E-F-LHS}
\EE_{\mu -\alpha_j+\alpha_i,\mu -\alpha_j} \FF_{\mu -\alpha_j,\mu}=P_{13!} \Pi_{1!} (\Q_{l,\mbf Z_1})[m]
=\Pi_! R_{13!} (\Q_{l,\mbf Z_1}) [m]=\Pi_! (\Q_{l, \mbf Z})[m].
\end{equation}

On the other hand, let 
\begin{equation*}
\mbf Z_2
=\{ (X^1, X^4, X^3, \rho_1, \rho_2) | (X^1, X^4, X^3)\in \mbf E_{\Omega}(D, V^1, V^4, V^3), X^1 \overset{\rho_1}{\hookleftarrow} X^4 \overset{\rho_2}{\hookrightarrow} X^3\}
\end{equation*}
and $\pi_2$ be the projection from $\mbf Z_2$  to $\mbf E_{\Omega}(D, V^1, V^4, V^3)$. Define a morphism $\tilde r_{13}: \mbf Z_2\to \mbf Z$ by 
$\tilde r_{13} (X^1, X^4, X^3, \rho_1, \rho_2)=(X^1, X^3, \check \rho, \sigma_j)$ where 
$\check \rho_{ k} = \rho_{2,k} \rho_{1,k}^{-1}$ for any $k\in I\backslash \{j\}$ and 
$\sigma_j= \rho_{1,j} \rho_{2,j}^{-1}$.
Then we have $\tilde p_{13} \pi_2 = \pi \tilde r_{13}$, where $\tilde p_{13}$ is the projection from $\mbf Z_2$ to $\mbf E_{\Omega}(D, V^1, V^3)$, 
moreover $\tilde r_{13}$ is quotient map of $\mbf Z_2$ by $G_{V^4}$. From these facts, we get
\begin{equation}
\label{E-F-RHS}
 \FF_{\mu +\alpha_i-\alpha_j,\mu +\alpha_i} \EE_{\mu+\alpha_i,\mu} =
 \tilde P_{13!} \Pi_{2!} (\Q_{l,\mbf Z_2})[m']  = \Pi_! \tilde R_{13!} (\Q_{l,\mbf Z_2}) [m']=\Pi_! (\Q_{l, \mbf Z})[m'],
\end{equation}
where $m'=f_{\mu+\alpha_i-\alpha_j,\alpha_j} +e_{\mu+\alpha_i,\alpha_i}$.
By (\ref{E-F-LHS}), (\ref{E-F-RHS}) and the fact that $m=m'$, we have
the lemma.
\end{proof}

\begin{lem}  
\label{E-F-i} 
Let $\nu(i)=d_i +\sum_{h\in H: h'=i} \nu_{h''}$. For any vertex $i\in I$, 
\[
\EE_{\mu, \mu-\alpha_i} \FF_{\mu-\alpha_i, \mu} \oplus \bigoplus_{p=0}^{\nu_i -1} \II_{\mu} [\nu(i) -1-2p] =
\FF_{\mu, \mu+\alpha_i} \EE_{\mu+\alpha_i,\mu} \oplus \bigoplus_{p=0}^{\nu(i) -\nu_i-1}\II_{\mu}[\nu(i)-1-2p].
\]

\end{lem}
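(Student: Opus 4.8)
The plan is to rewrite each side, following the pattern of the proofs of Lemmas~\ref{I-I} and~\ref{E-i-j}, as a pushforward of a constant sheaf from an explicit correspondence variety, and then to compare the two correspondences by a stratification. By Lemma~\ref{generator-Fourier} and Proposition~\ref{Fourier-convolution}, both sides are invariant under $\Phi_{\Omega}^{\Omega'}$, so we may assume that $i$ is a source in $\Omega$. Fix $I$-graded spaces $W^1, W^2, W^3$ with $\dim W^1=\dim W^3=\nu$ and with $\dim W^2=\nu+i$ for the left-hand side, $\dim W^2=\nu-i$ for the right-hand side. Unravelling the convolution~\eqref{cdot}, the functors~\eqref{Pi} and the shifts~\eqref{coefficient} exactly as in Lemma~\ref{E-i-j} — commuting $P^{*}$, $P_{!}$ and $\otimes$ past one another by cartesian squares, then passing to the quotient stack by $\mbf H$ — one obtains
\[
\EE_{\mu, \mu-\alpha_i}\FF_{\mu-\alpha_i, \mu}=\Pi_{!}\left(\Q_{l,\mathcal{A}}\right)[\nu(i)-1],\qquad \FF_{\mu, \mu+\alpha_i}\EE_{\mu+\alpha_i, \mu}=\Pi_{!}\left(\Q_{l,\mathcal{B}}\right)[\nu(i)-1],
\]
where $\mathcal{A}$ (resp.\ $\mathcal{B}$) is the variety of tuples $(X^1,X^2,X^3,\rho_1,\rho_3)$ with $X^1\overset{\rho_1}{\hookrightarrow}X^2\overset{\rho_3}{\hookleftarrow}X^3$ (resp.\ $X^1\overset{\rho_1}{\hookleftarrow}X^2\overset{\rho_3}{\hookrightarrow}X^3$), $\Pi$ is the evident projection to $\mbf E_{\Omega}(D,W^1,W^3)$, and the common outer shift $[\nu(i)-1]$ is what~\eqref{coefficient} gives once $i$ is a source, since there $e_{\mu,\alpha_i}+f_{\mu-\alpha_i,\alpha_i}=f_{\mu,\alpha_i}+e_{\mu+\alpha_i,\alpha_i}=\nu(i)-1$. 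The new feature, compared with Lemma~\ref{E-i-j} (where $i\ne j$ let one delete the $j$-component, so that the $G_{V^2}$-quotient was an isomorphism), is that here $W^2$ cannot be eliminated, which is exactly the source of the correction terms.

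Next I would stratify $\mathcal{A}$ by the relative position of $\rho_1(W^1)$ and $\rho_3(W^3)$ in $W^2$. As $i$ is a source and $W^2$ differs from $W^1$ only in the $i$-component, $\rho_{1,k}$ and $\rho_{3,k}$ are forced to be isomorphisms for $k\ne i$, so there are exactly two strata: the open $\mathcal{A}^{\circ}$, where $\rho_1(W^1)+\rho_3(W^3)=W^2$, and the closed $\mathcal{A}^{d}$, where $\rho_1(W^1)=\rho_3(W^3)$. On $\mathcal{A}^{\circ}$ the subspace $U:=\rho_1^{-1}(\rho_1 W^1\cap\rho_3 W^3)$ is an $X^1$-subrepresentation of codimension $i$, canonically also realized inside $X^3$, and passing to this common codimension-$i$ subrepresentation identifies, over $\mbf E_{\Omega}(D,W^1,W^3)$, the stratum $\mathcal{A}^{\circ}$ with the open stratum $\mathcal{B}^{\circ}$ of the analogous two-stratum stratification of $\mathcal{B}$ (giving $W^2$ the spanning property amounts to choosing the pushout $W^1\oplus_U W^3$). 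Hence the open--closed triangles of $\mathcal{A}$ and of $\mathcal{B}$ furnish distinguished triangles $\mathcal{H}\to\EE_{\mu,\mu-\alpha_i}\FF_{\mu-\alpha_i,\mu}\to\Pi_{!}(\Q_{l,\mathcal{A}^{d}})[\nu(i)-1]\to$ and $\mathcal{H}\to\FF_{\mu,\mu+\alpha_i}\EE_{\mu+\alpha_i,\mu}\to\Pi_{!}(\Q_{l,\mathcal{B}^{d}})[\nu(i)-1]\to$ with the \emph{same} first term $\mathcal{H}=\Pi_{!}(\Q_{l,\mathcal{A}^{\circ}})[\nu(i)-1]=\Pi_{!}(\Q_{l,\mathcal{B}^{\circ}})[\nu(i)-1]$. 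For the closed strata the localization is essential: in $\mathscr D^{b}_{\G}$ one may assume $X^1(i),X^2(i),X^3(i)$ all injective. On $\mathcal{A}^{d}$ the map $\rho_3^{-1}\rho_1$ is an isomorphism $X^1\cong X^3$, and the remaining freedom is the choice of $W^2$, i.e.\ of a line in the cokernel of $X^1(i)$, which has dimension $\nu(i)-\nu_i$; so $\mathcal{A}^{d}$ is a $\mbb P^{\nu(i)-\nu_i-1}$-bundle over $\mbf Z_{\Omega}(D,W^1,W^3)$ and $\Pi_{!}(\Q_{l,\mathcal{A}^{d}})[\nu(i)-1]=\bigoplus_{p=0}^{\nu(i)-\nu_i-1}\II_{\mu}[\nu(i)-1-2p]$; dually $\mathcal{B}^{d}$ is a $\mbb P^{\nu_i-1}$-bundle over $\mbf Z_{\Omega}(D,W^1,W^3)$ (the "line" being a hyperplane in the $i$-component of $W^1$), so $\Pi_{!}(\Q_{l,\mathcal{B}^{d}})[\nu(i)-1]=\bigoplus_{p=0}^{\nu_i-1}\II_{\mu}[\nu(i)-1-2p]$.

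Finally I would assemble the two triangles. An octahedron relating the cones of the two connecting morphisms $\Pi_{!}(\Q_{l,\mathcal{A}^{d}})[\nu(i)-2]\to\mathcal{H}$ and $\Pi_{!}(\Q_{l,\mathcal{B}^{d}})[\nu(i)-2]\to\mathcal{H}$ to the cone of their direct sum produces an object $\mathcal{M}$ sitting in triangles $\EE_{\mu,\mu-\alpha_i}\FF_{\mu-\alpha_i,\mu}\to\mathcal{M}\to\Pi_{!}(\Q_{l,\mathcal{B}^{d}})[\nu(i)-1]\xrightarrow{\partial_1}$ and $\FF_{\mu,\mu+\alpha_i}\EE_{\mu+\alpha_i,\mu}\to\mathcal{M}\to\Pi_{!}(\Q_{l,\mathcal{A}^{d}})[\nu(i)-1]\xrightarrow{\partial_2}$. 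Since all four complexes $\EE_{\mu,\mu-\alpha_i}\FF_{\mu-\alpha_i,\mu}$, $\FF_{\mu,\mu+\alpha_i}\EE_{\mu+\alpha_i,\mu}$, $\Pi_{!}(\Q_{l,\mathcal{A}^{d}})[\nu(i)-1]$, $\Pi_{!}(\Q_{l,\mathcal{B}^{d}})[\nu(i)-1]$ are semisimple and of one parity — being pushforwards of constant sheaves from smooth varieties along "even" morphisms, a decomposition-theorem/parity statement in the localized equivariant category of~\cite{Li10b} — and the two closed strata contribute only the shifts $\nu(i)-1-2p$, the maps $\partial_1,\partial_2$ land in $\Hom$-groups of the wrong parity and hence vanish. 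Therefore
\[
\EE_{\mu,\mu-\alpha_i}\FF_{\mu-\alpha_i,\mu}\oplus\bigoplus_{p=0}^{\nu_i-1}\II_{\mu}[\nu(i)-1-2p]\ =\ \mathcal{M}\ =\ \FF_{\mu,\mu+\alpha_i}\EE_{\mu+\alpha_i,\mu}\oplus\bigoplus_{p=0}^{\nu(i)-\nu_i-1}\II_{\mu}[\nu(i)-1-2p],
\]
which is the lemma. The main obstacle is this last step — establishing the semisimplicity and parity of the four complexes in $\mathscr D^{b}_{\G}$ and, upstream, that the localization genuinely forces the injectivity of $X(i)$ at all three spots so that the closed strata really are projective bundles of the asserted ranks — which is precisely where the equivariant-perverse-sheaf machinery of~\cite{Li10b}, together with the shift bookkeeping of~\eqref{coefficient}, does the work.
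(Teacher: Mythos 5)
Your reduction follows the same skeleton as the paper's proof: Fourier transform to make $i$ a source, rewriting both products as $\Pi_!(\Q_{l,-})[\nu(i)-1]$ over the correspondences of triples $X^1\hookrightarrow X^2\hookleftarrow X^3$ and $X^1\hookleftarrow X^2\hookrightarrow X^3$, the two-stratum decomposition according to whether the images of $X^1$ and $X^3$ coincide, the identification of the two open strata, and the recognition of the closed strata (on the injective locus supplied by the localization) as projective bundles of relative dimensions $\nu(i)-\nu_i-1$ and $\nu_i-1$ over the locus underlying $\II_\mu$. Up to the bookkeeping of quotients by the middle group, all of this is the paper's argument.

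The genuine gap is in your final step. You need the two open--closed triangles to split, and you deduce this from the assertion that $\EE_{\mu,\mu-\alpha_i}\FF_{\mu-\alpha_i,\mu}$, $\FF_{\mu,\mu+\alpha_i}\EE_{\mu+\alpha_i,\mu}$ and the two closed-stratum pushforwards are semisimple of a single parity, ``by the decomposition theorem.'' But none of the maps you push along is proper: $\pi_{12}$ and the projections from $\mathcal A$, $\mathcal B$, $\mathcal A^{d}$, $\mathcal B^{d}$ to $\mbf E_{\Omega}(D,V^1,V^3)$ are not proper even after dividing by the middle group, so the decomposition theorem does not apply to these complexes; indeed the paper explicitly leaves the semisimplicity of such monomials (equivalently, whether $\II_\mu$ is an intersection complex) open --- see the Remark following Proposition~\ref{boundedness} and the conditional Corollary at the end. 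Hence the vanishing of your connecting maps $\partial_1,\partial_2$ is unsupported, and the octahedron yields only the two triangles, not the direct sum decompositions. The paper sidesteps this by applying the decomposition theorem at a stage where properness does hold: after restricting to the injective locus, deleting the arrows out of $i$ and dividing by the groups $\mrm{GL}(V^a_i)$, the only map that has to be decomposed is the forgetful map $r_3\colon \mbf Y_1\to\mbf X_1$ (resp. $t_3\colon \mbf Y_2\to\mbf X_2$), which is proper with smooth source; so $R_{3!}\Q_{l,\mbf Y_1}$ splits as the intermediate extension from the open stratum plus the projective-bundle part, and only afterwards are the remaining non-proper functors $S_3^*R_{4!}$ applied to the already-split object, the two sides being matched through $\mbf Y_1^o\simeq\mbf Y_2^o$ and $\mbf X_1^c\simeq\mbf X_2^c$. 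If you reorganize your argument in this order --- split first along the proper Grassmannian-type forgetful map, then push forward --- the semisimplicity and parity claims you flagged as the main obstacle are never needed.
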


\begin{proof}
Due to Lemma ~\ref{generator-Fourier},  we may assume that $i$ is a source in $\Omega$.  Let us fix four $I$-graded vector spaces, $V^a$, for $a=1, 2, 3, 4$,  such that
\begin{equation}
\label{E-i-i}
\dim V^1=\dim V^3=\nu, \quad \dim V^2=\nu+i \quad \mbox{and} \quad \dim V^4=\nu-i.
\end{equation}
Then  we have 
\[
\EE_{\mu,\mu -\alpha_i} \FF_{\mu -\alpha_i,\mu}=P_{13!} \Pi_{1!} (\Q_{l,\mbf Z_1}) [m]
\quad \mbox{and} \quad
\FF_{\mu +\alpha_i-\alpha_j,\mu +\alpha_i} \EE_{\mu+\alpha_i,\mu} =
 \tilde P_{13!} \Pi_{2!} (\Q_{l,\mbf Z_2})[m],
\]
where $m=\nu(i) -1$ and  
the other notations on the right-hand sides are defined  in the proof of Lemma ~\ref{E-i-j}  with the condition (\ref{E-F-i-j}) replaced by 
(\ref{E-i-i}).

Let $\mbf Z_1^s$ be the open subvariety of $\mbf Z_1$ defined by the condition that $X^1(i)$, $X^2(i)$ and $X^3(i)$ are injective.
Similarly, we define the open subvariety $\mbf E_{\Omega}^s$ in $\mbf E_{\Omega}(D, V^1, V^3)$.

Denote by $\check X$  the element obtained from $X\in \mbf E_{\Omega}(D, V)$ by deleting any component $x_h$ such that $ h'=i$.
Let $\check {\mbf Z}_1^s$ be the variety of tuples $( X^1,\check X^2, X^3, \rho_1, \rho_2)$.
Let $\mbf Y_1$ be the variety of  tuples $(\check X^1, \check X^2, \check X^3, \check \rho_1, \check \rho_2, \mathcal V_1,\mathcal  V_2, \mathcal V_3)$, 
where $\mathcal V_1, \mathcal V_2, \mathcal V_3\subseteq D_i \oplus \oplus_{h\in \Omega: h'=i}V_{h''}$, 
such that $\mathcal V_1,\mathcal  \mathcal \mathcal V_3\subseteq \mathcal V_2$,
$\dim \mathcal V_1=\dim \mathcal V_3=\nu_i$ and
$\dim \mathcal V_2=\nu_i+1$.
Similarly, we define the variety $\X_1$ of tuples $(\check X^1, \check X^2, \check X^3, \check \rho_1, \check \rho_2, \mathcal V_1, \mathcal V_3)$ and 
the variety $\mbf W$ of tuples $(\check X^1,\check X^3, \mathcal V_1, \mathcal V_3)$.
Then we have the following cartesian diagram
\[
\begin{CD}
\mbf Z_1^s @>r_1 >>  \check {\mbf Z}_1^s @>r_2>> \mbf E_{\Omega}^s\\
@Vs_1 VV @Vs_2 VV @Vs_3VV\\
\mbf Y_1 @>r_3>> \mbf X_1 @>r_4 >> \mbf W,
\end{CD}
\]
where the $r_a$'s  are the obvious projections, and $s_2$ and $s_3$ are induced from $s_1$, which is defined by
$s_1 ( X^1, X^2, X^3,\rho_1,\rho_2) = (\check X^1, \check X^2, \check X^3, \check \rho_1, \check \rho_2, \mathcal V_1,\mathcal  V_2, \mathcal V_3)$
with 
\[
\mathcal V_2 =\mbox{im} \; (q_i^2+\sum_{h\in \Omega: h'=i} x_h^2),\quad \mbox{and}\quad
\mathcal V_a=\mbox{im} \; (q_i^a+\sum_{h\in \Omega: h'=i} \rho_{a,h''}x_h^a),\quad \forall a=1, 3.
\]
Observe that $s_1$ and $s_2$ are the quotient maps of $\mbf Z_1^s$ and $\check{\mbf Z}^s_1$ by the group 
$G_{V_i^1}\times G_{V_i^2}\times G_{V_i^3}$, respectively, and $s_3$ is the quotient map of $\mbf E^s_{\Omega}$ by  $G_{V_i^1}\times G_{V_i^3}$. Thus we have
\begin{equation}
R_{2!} R_{1!} (\Q_{l,\mbf Z_1^s}) 
=S_3^* R_{4!} R_{3!} (\Q_{l,\mbf Y_1}). 
\end{equation}
Let $\mbf Y_1^c$ be the closed subvariety of $\mbf Y_1$ defined by the condition $\mathcal V_1=\mathcal V_3$ and $\mbf Y_1^o$ be its complement.
Let $i_1: \mbf Y_1^c\to \mbf Y_1$ and $j_1:\mbf  Y_1^o\to \mbf Y_1$ be the inclusions.
Sine $r_3$ is proper and $\mbf Y_1$ is smooth,  the complex $R_{3!} (\Q_{l,\mbf Y_1})$ is semisimple. So we have 
\[
r_{3!} (\Q_{l,\mbf Y_1}) = j_{1!*}^o r^o_{3!}  (\Q_{l, \mbf Y_1^o}) \oplus r_{3!} i_{1!} (\Q_{l,\mbf Y_1^c}),
\]
where $j^o_1$ and $r_3^o$ are the morphisms $\mbf Y_1^o\overset{r_3^o} {\to} \mbf X_1^o  \overset{j_1^o}{\to} \mbf X_1$ with $\mbf X_1^o$ the image of $\mbf Y_1^o$
under $r_3$.
Observe that the morphism $r_3 i_1$ is a projective bundle of relative dimension $\nu(i)-\nu_i-1$. Thus
$r_{3!} i_{1!} (\Q_{l,\mbf Y_1^c})= \oplus_{p=0}^{\nu(i)-\nu_i -1} \Q_{l, \mbf X_1^c}[-2p]$ 
where $\mbf X_1^c$ is the closed subvariety of $\mbf X_1$ defined by the condition 
$\mathcal V_1=\mathcal V_3$.
Then, we have 
\[
R_{3!} (\Q_{l,\mbf Y_1})= J_{!*}^o R^o_{3!}  (\Q_{l, \mbf Y_1^o}) \oplus  \oplus_{p=0}^{\nu(i)-\nu_i -1} \Q_{l, \mbf X_1^c}[-2p].
\]

By combining the above analysis, we see that the restriction of the complex $\EE_{\mu,\mu -\alpha_i} \FF_{\mu -\alpha_i,\mu}$ to $\mbf E_{\Omega}^s$ 
is equal to 
\begin{equation}
\label{E-i-i-LHS}
R_{2!}R_{1!} (\Q_{l,\mbf Z_1^s}) = S_3^* R_{4!} R_{3!} (\Q_{l,\mbf Y_1})=
S_3^* R_{4!} J_{1!*}^o R^o_{3!}  (\Q_{l, \mbf Y_1^o}) \oplus  \oplus_{p=0}^{\nu(i)-\nu_i -1} S_3^* R_{4!}\Q_{l, \mbf X_1^c}[m-2p].
\end{equation}

On the other hand, we may define the  open subvarieties $\mbf Z_2^s$ of $\mbf Z_2^s$ similar to the subvariety $\mbf Z_1^s$ of $\mbf Z$. 
Then the following varieties $\check{\mbf Z}_2$,
$\mbf Y_2$ , $\mbf Y_2^c$, $\mbf Y_2^o$ and $\mbf X_2$ in the diagram below are defined in a way similar to the varieties having  subscript $1$:
\[
\begin{CD}
\mbf Z_2^s @>t_1 >>  \check {\mbf Z}_2^s @>t_2>> \mbf E_{\Omega}^s\\
@Vw_1 VV @Vw_2 VV @Vs_3VV\\
\mbf Y_2 @>t_3>> \mbf X_2 @>t_4 >> \mbf W,
\end{CD}
\]
where the morphism $w_1$ is defined by
$w_1 ( X^1, X^4, X^3,\sigma_1,\sigma_2) = (\check X^1, \check X^4, \check X^3, \check \sigma_1, \check \sigma_2, \mathcal V_1,\mathcal  V_4, \mathcal V_3)$
with $\mathcal V_4 =\mbox{im} \; (q_i^4+\sum_{h\in \Omega: h'=i} x_h^4)$  and
$\mathcal V_a=\mbox{im} \; (q_i^a+\sum_{h\in \Omega: h'=i} \sigma_{a, h''}^{-1}x_h^a)$ for any $ a=1, 3$.

Let $i_2: \mbf Y_2^c\hookrightarrow \mbf Y_2\hookleftarrow \mbf Y_2^o: j_2 $ be the inclusions. Then we see that $t_3i_2$ is a projective bundle of relative dimension $\nu_i-1$.
An argument similar to the proof of (\ref{E-i-i-LHS}) shows that the restriction of the complex $\FF_{\mu +\alpha_i-\alpha_j,\mu +\alpha_i} \EE_{\mu+\alpha_i,\mu}$
to $\mbf E^s_{\Omega}$ is equal to 
\begin{equation}
\label{E-i-i-RHS}
T_{2!}T_{1!} (\Q_{l,\mbf Z_2^s}) = S_3^* T_{4!} T_{3!} (\Q_{l,\mbf Y_2})=
S_3^* T_{4!}  J^o_{2!*} T^o_{3!}  (\Q_{l, \mbf Y_2^o}) \oplus  \oplus_{p=0}^{\nu_i -1} S_3^* T_{4!}\Q_{l, \mbf X_2^c}[m-2p],
\end{equation}
where $j^o_2$ and $t_3^o$ are the morphisms $\mbf Y_2^o\overset{t_3^o} {\to} \mbf X_2^o  \overset{j_2^o}{\to} \mbf X_2$ with $\mbf X_2^o$.
Finally, observe that there are isomorphisms $\mbf Y_1^o\simeq \mbf Y_2^o$, $\mbf X^c_1\simeq \mbf X^c_2$ and moreover the complex
$S_3^* T_{4!}(\Q_{l, \mbf X_2^c})$ is the restriction of $\II_{\mu}$ to $\mbf E^s_{\Omega}$. 
The lemma follows by comparing (\ref{E-i-i-LHS}) and (\ref{E-i-i-RHS}) and using 
the observations.
\end{proof}

\begin{lem} 
\label{Serre}
For any $i\neq j\in I$,  let $m=1-i\cdot j$. We have
\begin{equation*}
\begin{split}
&\bigoplus_{\substack{0\leq p\leq m \\ p \; even }}  
\EE^{(m-p)}_{\mu^3, \mu^2} 
\EE_{\mu^2, \mu^1} 
\EE^{(p)}_{\mu^1, \mu} 
=\bigoplus_{\substack{0\leq p\leq m\\ p \; odd} }  
\EE^{(m-p)}_{\mu^3,  \mu^2} 
\EE_{\mu^2, \mu^1} 
\EE^{(p)}_{\mu^1, \mu};\\
&\bigoplus_{\substack{0\leq p\leq m\\p \; even}  }  
\FF^{(p)}_{\mu,  \mu^1} 
\FF_{\mu^1,  \mu^2} 
\FF^{(m-p)}_{\mu^2,  \mu^3} 
=
\bigoplus_{\substack{0\leq p\leq m\\ p \; odd}  }  
\FF^{(p)}_{\mu,  \mu^1} 
\FF_{\mu^1,  \mu^2} 
\FF^{(m-p)}_{\mu^2,  \mu^3};
\end{split}
\end{equation*}
where 
$\mu^1=\mu+p\alpha_i$, 
$\mu^2= \mu+p\alpha_i+\alpha_j$, and
$\mu^3=\mu+m\alpha_i+\alpha_j$.
\end{lem}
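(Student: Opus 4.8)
\emph{Strategy.} The two identities are transposes of one another, so it suffices to prove the first and then run the same argument with sub- and quotient-representations interchanged, i.e. with $\mbf Z_{\Omega}$ and $\mbf Z^t_{\Omega}$ swapped. By Lemma~\ref{generator-Fourier} and Proposition~\ref{Fourier-convolution} both sides of the first identity are carried by $\Phi_{\Omega}^{\Omega'}$ to the analogous expressions, so I may assume that $i$ is a source in $\Omega$.

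\emph{Step 1: the monomials as flag push-forwards.} Fix $I$-graded spaces $V^0,V^1,V^2,V^3$ of dimension vectors $\nu,\ \nu-pi,\ \nu-pi-j,\ \nu-mi-j$ (where $\mu=\lambda-\nu$), and let $\mbf F_p$ be the variety of compatible chains $X^3\overset{\rho_3}{\hookrightarrow}X^2\overset{\rho_2}{\hookrightarrow}X^1\overset{\rho_1}{\hookrightarrow}X^0$ of inclusions of representations with successive graded jumps $(m-p)i,\ j,\ pi$. Iterating the base-change manipulations from the proofs of Lemmas~\ref{I-I} and~\ref{E-i-j} --- the cartesian squares, the identities of the form $R^*R_{!}=S_{!}S^*$, and the fact that a ``composition-of-inclusions'' morphism is a quotient by a product of general linear groups --- one obtains
\[
\EE^{(m-p)}_{\mu^3,\mu^2}\,\EE_{\mu^2,\mu^1}\,\EE^{(p)}_{\mu^1,\mu}\;=\;Q\bigl(\pi_{p!}(\Q_{l,\mbf F_p})[c_p]\bigr),
\]
where $\pi_p\colon\mbf F_p\to\mbf E_{\Omega}(D,V^3,V^0)$ records $(X^3,X^0)$ and $c_p=e_{\mu^1,p\alpha_i}+e_{\mu^2,\alpha_j}+e_{\mu^3,(m-p)\alpha_i}$; since $i$ is a source the term $X^1$ is actually determined by the rest, so $\mbf F_p$ is equally described by the shorter chain $X^3\hookrightarrow X^2\hookrightarrow X^0$. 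Factoring $\pi_p$ as $\mbf F_p\overset{g_p}{\to}\mbf Z_{\Omega}(D,V^3,V^0)\overset{\eta}{\to}\mbf E_{\Omega}(D,V^3,V^0)$ with $g_p$ forgetting the intermediate term, it suffices to prove
\[
\bigoplus_{p\ \mathrm{even}}g_{p!}(\Q_{l,\mbf F_p})[c_p]\;=\;\bigoplus_{p\ \mathrm{odd}}g_{p!}(\Q_{l,\mbf F_p})[c_p]
\]
in $\D^b_{\G}\bigl(\mbf Z_{\Omega}(D,V^3,V^0)\bigr)$; moreover, exactly as in Lemma~\ref{E-F-i}, it is enough to check this after restriction along the open locus $\mbf E^s_{\Omega}$ where $X^3(i)$ and $X^0(i)$ are injective, since its complement is the locus $F_i$ and $Q$ kills complexes supported there.

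\emph{Step 2: the geometric Serre identity (the crux).} Over a point $X^3\overset{\sigma}{\hookrightarrow}X^0$ put $L=V^0/\sigma(V^3)$, of dimension vector $mi+j$; using that $i$ is a source one checks that the fibre of $g_p$ is the Grassmannian $\Gr(m-p,B)$, where $B=\bigcap_{h\in\Omega:\,h'=i,\,h''=j}\ker(x^L_h)\subseteq L_i$ is the intersection of the $m-1$ hyperplanes of the $m$-dimensional space $L_i$ cut out by the arrows $i\to j$, so that $1\le\dim B\le m$. Over $\mbf E^s_{\Omega}$ one now runs the device of Lemma~\ref{E-F-i}: replacing the flags of subrepresentations by the flags of their images under $X(i)$ and $X(j)$, which live in the fixed vector spaces $D_i\oplus\bigoplus_{h\in\Omega:h'=i}V^0_{h''}$ and $D_j\oplus\bigoplus_{h\in\Omega:h'=j}V^0_{h''}$, presents each $\mbf F_p$, up to affine bundles, as a smooth partial flag variety over a common base, with $g_p$ a proper morphism; the decomposition theorem then applies, and stratifying by $b=\dim B$ makes each $g_{p!}(\Q_{l,\mbf F_p})[c_p]$ restrict on a stratum to a direct sum of shifts of the constant sheaf with multiplicities the Betti numbers of $\Gr(m-p,b)$. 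A direct computation from~\eqref{coefficient} --- using $\#\{h\in\Omega:h'=i,h''=j\}=m-1$ and the fact that $L$ is supported on $\{i,j\}$ --- gives $c_p=K-(m-p)(m-p-1)$ with $K$ independent of $p$; hence on each stratum the difference of the even and the odd side has ``graded multiplicity''
\[
(-1)^m v^{-K}\sum_{q=0}^{b}(-1)^q\,v^{\,q(q-1)}\binom{b}{q}_{v^2}\;=\;(-1)^m v^{-K}\prod_{k=0}^{b-1}\bigl(1-v^{2k}\bigr)\;=\;0
\]
by Gauss's $q$-binomial identity (the product vanishes because $b\ge 1$). As the two sides are semisimple complexes with the same restriction to every stratum, they are isomorphic, which together with Step 1 proves the lemma.

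\emph{The main obstacle} lies in Step 2. First, carrying out the image-flag device with \emph{two} moving vertices is genuinely more involved than the single-vertex situation of Lemma~\ref{E-F-i}: the arrows between $i$ and $j$ impose incidence relations between the $i$-flags and the $j$-flags, so the relevant flag varieties are not products and the stratification and decomposition-theorem bookkeeping require care. Second, one has to check that the shifts $c_p$ coming from~\eqref{coefficient}, together with any shifts introduced by the base-change identities of Step 1, match the cohomological degrees of the Grassmannian bundles $\Gr(m-p,b)$ on the nose, so that the $q$-binomial identity applies without an extraneous Tate twist.
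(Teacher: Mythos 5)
Your proposal is correct and follows essentially the same route as the paper (which reproduces Zheng's argument): reduce to $i$ a source via the Fourier--Deligne transform, realize the monomials as pushforwards from chain varieties factoring through $\mbf Z_{\Omega}$, restrict to the stable locus using the localization, identify Grassmannian fibres $\Gr(m-p,\cdot)$ over a stratification, and match shifts $c_p=K-(m-p)(m-p-1)$, which agrees with the paper's $s_{m-p}$. The only cosmetic differences are that you describe the fibres via the subspace $B=\bigcap\ker(x^L_h)$ in the quotient representation rather than via the paper's image-flag quotient $(\mathcal V_1\subseteq\mathcal V_2,\ \mathcal V_3)$, and you close with Gauss's $q$-binomial identity where the paper uses an explicit bijection between even- and odd-length index sequences --- two equivalent formulations of the same cancellation.
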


\begin{proof}
Without lost of generality, we assume that $i$ is a source in $\Omega$. For $a=1, 2, 3, 4$, let $V^a$  be the $I$-graded vector spaces such that
\[
\dim V^1 =\nu-mi-j, \quad 
\dim V^2 =\nu -p i-j,\quad
\dim V^3=\nu-pi \quad \mbox{and} \quad
\dim V^4= \nu.
\]
Let  $\mbf Z$  be the variety of the data $(X^1 \overset{\rho_1}{\hookrightarrow} X^2\overset{\rho_2}{\hookrightarrow} X^3 \overset{\rho_3}{\hookrightarrow} X^4)$ where $X^a\in \mbf E_{\Omega}(D, V^a)$ for $a=1, 2, 3, 4$.  
Let $\pi: \mbf Z\to  \mbf E_{\Omega}(D, V^1,V^4)$ be the obvious projection.
Then 
\[
\EE^{(m-p)}_{\mu^3, \mu^2} 
\EE_{\mu^2, \mu^1} 
\EE^{(p)}_{\mu^1, \mu} 
=\Pi_! (\Q_{l, \mbf Z})[s_{m-p}],
\]
where 
$s_{m-p}=m(\nu(i)-\nu_i) +(d_j +\sum_{h\in \Omega: h'=j}\nu_{h''} -\nu_j)+(m-p) ( 1-(m-p))$.
Moreover, $\pi$ factors through $\mbf Z_{\Omega}(D, V^1, V^4)$, where the map $r$ from $\mbf Z$ to $\mbf Z_{\Omega}(D, V^1, V^4)$ is given by
$r (X^1 \overset{\rho_1}{\hookrightarrow} X^2\overset{\rho_2}{\hookrightarrow} X^3 \overset{\rho_3}{\hookrightarrow} X^4)=(X^1\overset{\rho_3\rho_2\rho_1}{\hookrightarrow} X^4)$.
Let $B_{m-p}=R_! (\Q_{l, \mbf Z})$.
Thus,
\[
\EE^{(m-p)}_{\mu^3, \mu^2} 
\EE_{\mu^2, \mu^1} 
\EE^{(p)}_{\mu^1, \mu} 
=\Pi_{!} (\Q_{l, \mbf Z_p})[s_{m-p}]
=\Pi_{12!} B_{m-p}[s_{m-p}].
\]
The identity for the $\EE$'s  is reduced to show that 
\begin{equation}
\label{even-odd}
\bigoplus_{\substack{0\leq p\leq m\\ p\; even}} B_{m-p}[(m-p) ( 1-(m-p))] =\bigoplus_{\substack{0\leq p\leq m \\ p \; odd}} B_{m-p}[(m-p)(1-(m-p))].
\end{equation}
This is shown in ~\cite[2.5.8]{Zheng08}.
For the sake of completeness, let us reproduce here.
Let $\mbf Z^s$ be the open subvariety of $\mbf Z$ defined by the condition that $X^a(i)$ are injective for $a=1, 2, 3, 4$.  The variety $\mbf Z^s_{\Omega}(D, V^1, V^4)$ is defined similarly.  Let $s: \mbf Z^s\to \mbf Z^s_{\Omega}(D, V^1, V^4)$ be the restriction of $r$ to $\mbf Z^s$ and 
$C_{m-p}:= S_!(\Q_{l,\mbf Z^s})$.
 Then the condition on the localization implies that we only need to show the
identity (\ref{even-odd}) when we restrict the complexes involved  to the variety $\mbf Z^s_{\Omega}(D, V^1, V^4)$, 
i.e., to show that  (\ref{even-odd}) holds with the complexes $B_{m-p}$ replaced by 
the complexes $C_{m-p}$.

Observe that the group $G_{V^2}\times G_{V^3}\times \mrm{GL}(V^1_i)\times \mrm{GL}(V^4_i)$ acts freely on $\mbf Z^s$ and the quotient  variety $\mbf Y$ is the variety 
of tuples $(\check X^1\overset{\rho}{\hookrightarrow} \check X^4, \mathcal V_1, \mathcal V_2, \mathcal V_3)$ 
where $\mathcal V_1,\mathcal V_2 \subseteq V^1(i)$, $ \mathcal V_3\subseteq V^4(i))$ such that 
$\mathcal V_1\subseteq V_2$ and $\rho(\mathcal V_2)\subseteq \mathcal V_3$; and $\dim \mathcal V_1=\nu_i -m$, $\dim \mathcal V_2=\nu_i -p$ and $\mathcal V_3=\nu_i$.
Moreover, the group $\mrm{GL}(V^1_i)\times \mrm{GL}(V^4_i)$ acts freely on $\mbf Z^s_{\Omega}(D, V^1, V^4)$ and its quotient variety $\mbf X$ 
is the variety obtained from $\mbf Y$ 
by deleting $\mathcal V_2$ and replacing the condition $\rho(\mathcal V_2)\subseteq \mathcal V_3$ by $\rho(\mathcal V_1)\subseteq \mathcal V_3$.
Let $t: \mbf Y\to \mbf X$ be the projection. Let $A_{m-p}=T_!(\Q_{l, Y})$. To show (\ref{even-odd}), it reduces to show that the identity holds 
with the complexes $B_{m-p}$ replaced by 
the complexes $A_{m-p}$.

Now define a partition $(\mbf X_n)_{n=0}^m$ of $\mbf X$ such that elements in $\mbf X_n$ satisfying  the condition that 
$\dim \rho(V^1(i))\cap \mathcal V_3= \nu_i -m+n$.  Let $\mbf Y_n= t^{-1}(\mbf X_n)$, and $t_n:\mbf  Y_n\to\mbf  X_n$ be the restriction of $t$ to $\mbf Y_n$. 
Then the restriction of $r$ to $\mbf Y_n$ has fiber at any point of $\mbf X_n$
isomorphic to the Grassmannian $\mrm{Gr}(m-p, n)$ of $(m-p)$-subspaces in  $n$-space.
By the property of the  cohomology of $\mrm{Gr}(m-p, n)$, we have  
\[
T_{n!} (\Q_{l,\mbf Y_n}) = \oplus_{\kappa} \Q_{l,\mbf X_n} [-2\sum_{a=1}^{m-p} (\kappa_a-a)],
\] 
where 
$\kappa$ runs through the sequences $(1\leq \kappa_1< \kappa_2<\cdots < \kappa_{m-p} \leq n)$.
Since the complexes $A_{m-p}$ are semisimple, it suffices to show that  (\ref{even-odd})  holds when restricts to the strata $X_n$ for all $n$, which is left to show that 
\[
\bigoplus_{\substack{ 0\leq p\leq n\\ p\; even}} \oplus_{\kappa} \Q_{l,\mbf X_n} [-2\sum_{a=1}^{p} (\kappa_a-a)+p(1-p)]=
\bigoplus_{\substack{0\leq p\leq n \\ p\; odd}} \oplus_{\kappa} \Q_{l,\mbf X_n} [-2\sum_{a=1}^{p} (\kappa_a-a)+p(1-p)].
\]
To any sequence $\kappa=(1\leq \kappa_1< \cdots <\kappa_p\leq n)$ of odd length, attached a sequence $\kappa'$ of even length  by 
$\kappa'_a=\kappa_{a+1} $ for $a=1,\cdots, p$,  if $\kappa_1=1$;
$\kappa'_1=1$ and  $\kappa'_{a+1}=\kappa_a$ for $a=1,\cdots, p$ if $\kappa_1\neq 1$.
This defines a  bijection between the set of sequences $\kappa$ of even length and the set of sequences $\kappa$ of odd length and it is clear that the shifts on both sides
are the same under this bijection. Thus the identity holds.
The identity for the $\FF$'s can be proved similarly.
\end{proof}

\begin{lem} For any $i\in I$ and $m\in \mbb N$,  we have
\label{EE=E}
\begin{equation*}
\begin{split}
&\EE_{\mu+(m+1)\alpha_i, \mu+m\alpha_i} \EE^{(m)}_{\mu+m \alpha_i, \mu} = \bigoplus_{0\leq p\leq m} \EE^{(m+1)}_{\mu+(m+1)\alpha_i, \mu} [m-2p];\\
&\FF_{\mu-(m+1)\alpha_i, \mu-m\alpha_i} \FF^{(m)}_{\mu-m \alpha_i, \mu} = \bigoplus_{0\leq p\leq m} \FF^{(m+1)}_{\mu-(m+1)\alpha_i, \mu} [m-2p].
\end{split}
\end{equation*}
\end{lem}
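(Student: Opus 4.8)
The plan is to run the proof of Lemma~\ref{I-I} essentially verbatim, the only new ingredient being that the final contraction morphism is a projective bundle instead of an isomorphism of stacks.

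By Lemma~\ref{generator-Fourier} we may assume $i$ is a source in $\Omega$. Fix $I$-graded vector spaces $V^1, V^2, V^3$ with $\dim V^1=\nu-(m+1)i$, $\dim V^2=\nu-mi$ and $\dim V^3=\nu$, so that the three spaces agree in every component away from $i$. Exactly as in Lemma~\ref{I-I} (using $P_{12}^*\Pi_{12!}(\Q_{l,\mbf Z_{\Omega}(D,V^1,V^2)})=R_{1!}(\Q_{l,\mbf Z_1})$, $P_{23}^*\Pi_{12!}(\Q_{l,\mbf Z_{\Omega}(D,V^2,V^3)})=R_{2!}(\Q_{l,\mbf Z_2})$, and then $R_1^*R_{2!}=S_{1!}S_2^*$), one obtains
\[
\EE_{\mu+(m+1)\alpha_i,\mu+m\alpha_i}\,\EE^{(m)}_{\mu+m\alpha_i,\mu}=\Pi_{12!}\,T_!(\Q_{l,\mbf Z})[s_1+s_2],
\]
where $\mbf Z=\{X^1\overset{\rho_1}{\hookrightarrow}X^2\overset{\rho_2}{\hookrightarrow}X^3\}$, the morphism $t:\mbf Z\to\mbf Z_{\Omega}(D,V^1,V^3)$ sends this chain to $X^1\overset{\rho_2\rho_1}{\hookrightarrow}X^3$, and $s_1=e_{\mu+(m+1)\alpha_i,\alpha_i}$, $s_2=e_{\mu+m\alpha_i,m\alpha_i}$ are the shifts from (\ref{coefficient}) carried along by the two factors.

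The heart of the argument is the identity $T_!(\Q_{l,\mbf Z})=\bigoplus_{p=0}^{m}\Q_{l,\mbf Z_{\Omega}(D,V^1,V^3)}[-2p]$. As $\rho_2$ is an injection, $G_{V^2}$ acts freely on $\mbf Z$; the quotient $\mbf Z/G_{V^2}$ is the variety of quadruples $(X^1,X^3,\sigma,W)$ with $\sigma:X^1\hookrightarrow X^3$ and $W$ a sub-object of $X^3$ satisfying $\sigma(V^1)\subseteq W$ and $\dim W=\dim V^2$, and under this identification $t$ becomes the map forgetting $W$. Because $i$ is a source and $W$ agrees with $V^1$ and $V^3$ in every component other than $i$, the sub-object condition on $W$ is automatic, so the fibre of $\mbf Z/G_{V^2}\to\mbf Z_{\Omega}(D,V^1,V^3)$ over $(X^1,X^3,\sigma)$ is $\{W_i:\sigma(V^1_i)\subseteq W_i\subseteq V^3_i,\ \dim W_i/\sigma(V^1_i)=1\}=\mathbb P(V^3_i/\sigma(V^1_i))\cong\mathbb P^m$. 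Thus $t$ is, equivariantly, the projectivization of the rank-$(m+1)$ vector bundle $V^3_i/\sigma(V^1_i)$ on $\mbf Z_{\Omega}(D,V^1,V^3)$, and the projective bundle formula yields the decomposition above. Substituting it, and using $\Pi_{12!}(\Q_{l,\mbf Z_{\Omega}(D,V^1,V^3)})[s_3]=\EE^{(m+1)}_{\mu+(m+1)\alpha_i,\mu}$ with $s_3=e_{\mu+(m+1)\alpha_i,(m+1)\alpha_i}$, gives $\EE_{\mu+(m+1)\alpha_i,\mu+m\alpha_i}\EE^{(m)}_{\mu+m\alpha_i,\mu}=\bigoplus_{p=0}^m\EE^{(m+1)}_{\mu+(m+1)\alpha_i,\mu}[s_1+s_2-s_3-2p]$.

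Finally one checks the numerical identity $s_1+s_2-s_3=m$: writing $\nu(i)=d_i+\sum_{h\in\Omega:h'=i}\nu_{h''}$, formula (\ref{coefficient}) gives $s_1=\nu(i)-\nu_i+m$, $s_2=m(\nu(i)-\nu_i)$ and $s_3=(m+1)(\nu(i)-\nu_i)$, so $s_1+s_2=s_3+m$ and the $\EE$-identity follows. The $\FF$-identity is obtained identically, replacing $\mbf Z_{\Omega}$ by $\mbf Z^t_{\Omega}$ (chains of quotients $X^3\hookrightarrow X^2\hookrightarrow X^1$), the fibre being $\mathbb P(V^1_i/\sigma(V^3_i))\cong\mathbb P^m$, together with the parallel shift identity $f_{\mu-(m+1)\alpha_i,\alpha_i}+f_{\mu-m\alpha_i,m\alpha_i}=f_{\mu-(m+1)\alpha_i,(m+1)\alpha_i}+m$. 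The one place needing genuine care is confirming that the contraction map is a projective bundle over the \emph{whole} base $\mbf Z_{\Omega}(D,V^1,V^3)$ --- which is exactly where the hypothesis that $i$ is a source is used, ruling out any jump in the fibre dimension --- along with the (routine but unforgiving) bookkeeping of the cohomological shifts.
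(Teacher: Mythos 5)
Your proof is correct and follows essentially the same route as the paper: reduce the product to $\Pi_{12!}T_!(\Q_{l,\mbf Z})$ exactly as in Lemma \ref{I-I}, identify $t$ (modulo the free $G_{V^2}$-action) as a $\mathbb{P}^m$-bundle so that $T_!(\Q_{l,\mbf Z})=\bigoplus_{p=0}^{m}\Q_{l,\mbf Z_1}[-2p]$, and then match the cohomological shifts using (\ref{coefficient}). The only difference is that you spell out the reduction via Lemma \ref{generator-Fourier} to the case where $i$ is a source (which is indeed what rules out jumping fibres) and verify the shift identities explicitly, details the paper leaves implicit in its appeal to the proof of Lemma \ref{E-F-i}.
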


\begin{proof}
Fix three $I$-graded vector spaces $V^a$ for $a=1, 2,3$ such that $\dim V^1 =\nu-(m+1)i$, $\dim V^2=\nu-mi$ and $\dim V^3=\nu$. Let
$\mbf Z_1=\mbf Z_{\Omega}(D, V^1, V^3)$ and  $\mbf Z$ be the variety of tuples $(X^1, X^2, X^3, \rho_1, \rho_2)$, where $(X^1, X^2, X^3)\in \mbf E_{\Omega}(D, V^1, V^2, V^3)$, such that $X^1\overset{\rho_1}{\hookrightarrow} X^2 \overset{\rho_2}{\hookrightarrow} X^3$.
Let $t: \mbf Z \to \mbf Z_1$ be the map defined by $t(X^1, X^2, X^3, \rho_1, \rho_2)=(X^1, X^3, \rho_2\rho_1: X^1\to X^3)$. As in the proof of Lemma \ref{E-F-i}, we have 
$T_! (\Q_{l,\mbf Z} )=\oplus_{p=0}^{m} (\Q_{l,\mbf Z_1})[-2p]$.  So
\begin{equation*}
\begin{split}
\EE&_{\mu+(m+1)\alpha_i, \mu+m\alpha_i}  \EE^{(m)}_{\mu+m \alpha_i, \mu}=\Pi_{12!} T_! (\Q_{l,\mbf Z}) [e_{\mu+(m+1)\alpha_i, \alpha_i}+e_{\mu+m\alpha_i, m \alpha_i} ]\\
&=  \oplus_{p=0}^m    \Pi_{12!}  (\Q_{l,\mbf Z_1})[e_{\mu+(m+1)\alpha_i, \alpha_i}+e_{\mu+m\alpha_i, m \alpha_i}-2p]=
\oplus_{p=0}^m \EE^{(m+1)}_{\mu+(m+1)\alpha_i, \mu}  [m-2p].
\end{split}
\end{equation*}
The proof for the $\FF$'s is similar.
\end{proof}

We refer to ~\cite[I]{Li10b} and ~\cite[Ch. 23]{Lusztig93} 
for the definitions of the quantum modified algebra $\dot{\U}$ and its integral form $_{\mbb A} \! \dot{\U}$ associated to the graph $\Gamma$.
By specializing the shift $[z]$ to $v^z$ for any $z\in \mbb Z$, the identities  in Lemmas ~\ref{I-I}-\ref{EE=E} 
become  the defining relations of the integral form $_{\mbb A}\! \dot{\U}$.
In short, we have

\begin{thm}
\label{U-relations}
The complexes  $\II_{\mu}$, $\EE^{(n)}_{\mu, \mu-n \alpha_i}$ and $\FF^{(n)}_{\mu, \mu +n \alpha_i}$ 
satisfy the defining relations of the integral form $_{\mbb A}\! \dot{\U}$.
\end{thm}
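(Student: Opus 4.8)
The plan is to deduce the theorem from Lemmas \ref{I-I}--\ref{EE=E}, which already contain all the geometric work. Recall Lusztig's presentation of the integral form $_{\mbb A}\! \dot{\U}$ by generators $1_\mu$, $E_i^{(n)}1_\mu$, $F_i^{(n)}1_\mu$ ($\mu\in\X$, $i\in I$, $n\in\mbb N$) subject to: (a) orthogonality of the idempotents $1_\mu$; (b) the weight relations for $E_i^{(n)}$ and $F_i^{(n)}$; (c) the divided-power relations $E_i^{(a)}E_i^{(b)}1_\mu=\binom{a+b}{a}E_i^{(a+b)}1_\mu$ and their $F$-analogue; (d) $E_i^{(a)}F_j^{(b)}1_\mu=F_j^{(b)}E_i^{(a)}1_\mu$ for $i\neq j$; (e) the rank-one commutation $E_iF_i1_\mu-F_iE_i1_\mu=[(\check\alpha_i,\mu)]\,1_\mu$; and (f) the quantum Serre relations; see \cite[Ch.~23]{Lusztig93} and \cite[I]{Li10b}. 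I would fix the dictionary $1_\mu\leftrightarrow\II_\mu$, $E_i^{(n)}1_\mu\leftrightarrow\EE^{(n)}_{\mu+n\alpha_i,\mu}$, $F_i^{(n)}1_\mu\leftrightarrow\FF^{(n)}_{\mu-n\alpha_i,\mu}$, with multiplication given by the convolution product $\cdot$ of \ref{convolution}, and then verify each relation by reading off the corresponding Lemma after replacing every cohomological shift $[z]$ by $v^z$ and every direct sum $\bigoplus$ by a sum.

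Concretely: relation (a) is Lemma \ref{I-I}; relation (b) is the (unlabeled) lemma immediately following Lemma \ref{I-I}; the $a=b=1$ case of (d) is Lemma \ref{E-i-j}; relation (e) is Lemma \ref{E-F-i}, using the identity $\nu(i)-2\nu_i=(\check\alpha_i,\mu)$ (which follows from the conventions of \ref{notations} and the loop-freeness of $\Gamma$) together with $\sum_{p=0}^{\nu(i)-\nu_i-1}v^{\nu(i)-1-2p}-\sum_{p=0}^{\nu_i-1}v^{\nu(i)-1-2p}=v^{\nu_i}[\nu(i)-\nu_i]-v^{\nu(i)-\nu_i}[\nu_i]=[\nu(i)-2\nu_i]$; relation (f) is Lemma \ref{Serre}, after transposing the odd-$p$ terms of the alternating sum and using $m=1-i\cdot j$; and the $a=1$ case $E_iE_i^{(m)}1_\mu=[m+1]E_i^{(m+1)}1_\mu$ of (c), together with $\EE^{(0)}_{\mu,\mu}=\II_\mu$, is Lemma \ref{EE=E} via $\sum_{p=0}^m v^{m-2p}=[m+1]$. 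The full relations (c), (d), (e) then follow from these rank-one instances exactly as in the algebra $_{\mbb A}\! \dot{\U}$ (e.g.\ $[a+1]E_i^{(a+1)}E_i^{(b)}=E_iE_i^{(a)}E_i^{(b)}=\binom{a+b}{a}[a+b+1]E_i^{(a+b+1)}=[a+1]\binom{a+b+1}{a+1}E_i^{(a+b+1)}$, and one cancels $[a+1]$ since the quantum integers are not zero divisors in $_{\mbb A}\! \dot{\U}$). Since this exhausts the defining relations, the theorem follows; when $\mu\notin\lambda-\mbb N[I]$ all complexes involved vanish, consistently with passing to the quotient of $_{\mbb A}\! \dot{\U}$ by the ideal generated by the corresponding $1_\mu$.

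The real content being in the Lemmas, the remaining difficulty is purely bookkeeping, and this is the main (minor) obstacle. Three points must be handled with care. First, one must confirm that (a)--(f) is genuinely a complete presentation of $_{\mbb A}\! \dot{\U}$ and that the reduction of the general relations (c)--(e) to their rank-one instances is valid over $\mbb A=\mbb Z[v,v^{-1}]$; this is standard but should be tied to precise statements in \cite[Ch.~23]{Lusztig93} and \cite[I]{Li10b}. Second, one must check that the normalization shifts $e_{\mu,n\alpha_i}$, $f_{\mu,n\alpha_i}$ of \eqref{coefficient} are exactly those making $\EE^{(n)}$, $\FF^{(n)}$ behave as the \emph{divided powers} rather than some other scalar multiple --- the consistency test being precisely Lemma \ref{EE=E}. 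Third, one must note that each Lemma is an isomorphism of objects-with-shifts, not an a priori equality of ring elements, so the passage to $_{\mbb A}\! \dot{\U}$ proceeds through the additive assignment sending the class of a complex $K$ to an element of $_{\mbb A}\! \dot{\U}$ with $[K[1]]$ corresponding to $v$ times the class of $K$; this is legitimate because the Lemmas furnish genuine isomorphisms and, in particular, does not depend on the boundedness of monomials, which is proved only in the next section.
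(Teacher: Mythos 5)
Your proposal matches the paper's argument: the paper proves the theorem simply by observing that, after specializing each shift $[z]$ to $v^z$, the identities of Lemmas \ref{I-I}--\ref{EE=E} are precisely the defining relations of $_{\mbb A}\!\dot{\U}$ in the presentation referred to in \cite{Li10b} and \cite[Ch.~23]{Lusztig93}, which is exactly your dictionary and lemma-by-lemma matching. The only difference is that your additional reduction of the general divided-power (and higher rank-one) relations to their $a=1$ instances is not needed, since the presentation the paper invokes takes the lemma identities themselves (e.g.\ $E_iE_i^{(m)}1_\mu=[m+1]E_i^{(m+1)}1_\mu$) as defining relations, thereby avoiding the cancellation step that would be delicate at the level of complexes rather than classes.
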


\begin{rem}
In many, if not all, respects, the proof of Theorem ~\ref{U-relations} is very similar to that of ~\cite[Theorem  2.5.2]{Zheng08}
(see also Proposition ~\ref{Psi-relation} in this paper).
\end{rem}

\subsection{Complex $K_{\bullet}$}

Consider the complexes of the form 
\begin{equation}
\label{complex}
K_{\bullet} =K_1 \cdot K_2  \cdot  ... \cdot K_m \quad \in \mathscr D^-_{\G}(\mbf E_{\Omega}(D, V^1, V^2)),
\end{equation}
where the $K_a$'s are either $\EE^{(n)}_{\mu', \mu}$ or $\FF^{(n)}_{\mu', \mu}$.

\begin{prop}
\label{boundedness}
The complexes $K_{\bullet}$ in (\ref{complex}) are bounded.
\end{prop}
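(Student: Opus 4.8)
The difficulty is that the convolution product $K\cdot L=P_{13!}(P_{12}^*(K)\otimes P_{23}^*(L))$ is assembled from the fully faithful section $Q_!$ and the generalized proper-support pushforward $(Qp_{13})_!$, the latter involving a pushforward along a classifying-stack factor $BG_{V^2}$ and hence landing \emph{a priori} only in bounded-below derived categories; so iterating it could in principle produce unbounded complexes, even though each individual factor $\EE^{(n)}_{\mu',\mu}$, $\FF^{(n)}_{\mu',\mu}$, $\II_\mu$ is bounded --- each being $Q$ applied to $\pi_{12!}$ of a shifted constant sheaf on an honest variety along a morphism of finite type, and $Q$ carrying $\D^b_\G$ into $\mathscr D^b_\G$. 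The plan is first to reduce, using the defining relations already proved, to a ``normally ordered'' monomial, and then to identify the latter with a single pushforward of a constant sheaf along a morphism of \emph{varieties}, which is manifestly bounded.

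For the reduction I would induct on the number of pairs of factors in which an $\EE$ precedes an $\FF$. Lemma \ref{E-i-j} (and its routine extension to divided powers) transposes $\EE^{(n)}_{\cdots}$ and $\FF^{(n')}_{\cdots}$ when $i\neq j$, and Lemma \ref{E-F-i} (likewise extended) transposes them when $i=j$ at the cost of error terms that are direct sums of shifts of strictly shorter monomials --- in the $n=n'=1$ case, shifts of the $\II_\mu$, bounded by the previous paragraph, and in general shorter products $\FF^{(n'-t)}_{\cdots}\II_{\cdots}\EE^{(n-t)}_{\cdots}$ with $t\geq 1$, already normally ordered and bounded by the induction; Lemma \ref{EE=E} then merges consecutive factors at one vertex. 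Since every such relation is an isomorphism in $\mathscr D^-_\G$ with bounded summands adjoined to both sides, and since a direct summand of a bounded complex is bounded, the general case reduces to the boundedness of a single normally ordered monomial $\FF^{(c_1)}_{\cdots}\cdots\FF^{(c_s)}_{\cdots}\EE^{(b_1)}_{\cdots}\cdots\EE^{(b_t)}_{\cdots}$ (possibly zero, or carrying an intervening $\II_\mu$).

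For such a monomial, the argument of the proofs of Lemmas \ref{I-I}--\ref{EE=E} identifies it with $Q(\Pi_!(\Q_{l,\mbf Z})[s])$ for a suitable shift $s\in\mbb Z$, where $\mbf Z$ is the variety of chains $X^1\hookleftarrow\cdots\hookleftarrow X^p\hookrightarrow\cdots\hookrightarrow X^q$ compatible with the prescribed inclusions (so $X^p$ is the smallest term and every other term includes out toward one of the two extreme terms), and $\Pi\colon\mbf Z\to\mbf E_\Omega(D,V^1,V^q)$ is the projection remembering the extreme terms. The key point is that each intermediate term $X^a$ carries an inclusion \emph{out of} it, on which $G_{V^a}$ acts by precomposition, so the product $\prod_{a}G_{V^a}$ over the intermediate indices acts \emph{freely} on $\mbf Z$; hence $\mbf Z$ is a torsor under this group over an honest quasi-projective variety $\bar{\mbf Z}$, and $\Pi$ descends to a $\G$-equivariant morphism $\bar\Pi\colon\bar{\mbf Z}\to\mbf E_\Omega(D,V^1,V^q)$ of varieties of finite type. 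Therefore $\Pi_!(\Q_{l,\mbf Z})=\bar\Pi_!(\Q_{l,\bar{\mbf Z}})$ is bounded, and so is its image under $Q$. I expect the main obstacle to lie not in this last step but in producing the formula $K_\bullet=Q(\Pi_!(\Q_{l,\mbf Z})[s])$ rigorously, despite the section functor $Q_!$ interposed in the definitions of $P_{ij}^*$ and $P_{ij!}$: one must check that the error terms created when the localized functors are replaced by the honest six operations lie in $\mathcal N$ and are annihilated by the final $Q$. This is exactly the bookkeeping carried out throughout \cite{Li10b} and, in the guise of the vanishing $\iota^*m_{13}^*Q_!=0$, in the proof of Proposition \ref{Fourier-convolution}, and it is the technical heart of the argument.
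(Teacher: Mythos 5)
Your proposal is correct and takes essentially the same route as the paper's proof: reduce via Lemmas \ref{E-i-j} and \ref{E-F-i} to a normally ordered monomial $\FF_{\mu,(\mbf{j,b})}\EE_{(\mbf{i,a}),\mu}$, identify it (up to shift and the localization bookkeeping) with $\Pi_!$ of a constant sheaf on the chain variety, and conclude boundedness from the fact that the intermediate groups act freely, so the pushforward is computed along finite-type morphisms at the level of honest varieties. The only cosmetic difference is that you quotient by all intermediate $G_{V^a}$ at once, whereas the paper factors $\pi$ as a forgetful map, a free quotient by $G_{V^2}$, and a proper map, each of whose $!$-pushforwards preserves boundedness.
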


\begin{proof}
For  any pair $(\mbf{i, a})$ of sequences, where $\mbf i=(i_m,\cdots,i_1)\in I^m$ and $\mbf a=(a_m,\cdots, a_1)\in \mbb N^m$, we write
\[
\EE_{(\mbf{i, a}), \mu} =\EE^{(a_m)}_{\mu,\mu^{m-1}} \cdots \EE^{(a_2)}_{\mu^2,\mu^1} \EE^{(a_1)}_{\mu^1,\mu^0}
\quad \mbox{and}\quad 
\FF_{\mu, (\mbf{i, a})}=
\FF^{(a_1)}_{\mu^0,\mu^1} \cdots \FF^{(a_{m-1})}_{\mu^{m-2},\mu^{m-1}} \FF^{(a_m)}_{\mu^{m-1},\mu},
\]
such that 
$\mu^l -\mu^{l-1}= a_l \alpha_{i_l}$ for $l=1,\cdots, m$. By Lemmas ~\ref{E-i-j} and ~\ref{E-F-i}, 
it suffices to show the boundedness of the complex $K_{\bullet}$ if
$K_{\bullet}$ is of the form $\FF_{\mu, (\mbf{j, b})}\EE_{(\mbf{i,a}), \mu}$ for any two pairs $(\mbf{i, a})$ and $(\mbf{j, b})$.
An argument similar to the proof of Lemma ~\ref{Serre} yields that 
\[
\FF_{\mu, (\mbf{j, b})}\EE_{(\mbf{i,a}), \mu}=\Pi_! (\Q_{l, \mbf Z})[m], 
\]
for some $m$, where $\mbf Z$ is the variety of the data $( X^1 \overset{\rho_1}{\hookleftarrow} X^2 \overset{\rho_2}{\hookrightarrow} X^3) $ and $\pi$ is the projection from $\mbf Z$ to the variety
$\mbf E_{\Omega}(D, V^1, V^3)$ with the dimensions of $V^1 $ and $V^3$ determined by the pairs of sequences.
The morphism $\pi$ factors through the following varieties
\[
\mbf Z \overset{\pi_1}{\to} \mbf Z_1 \overset{\pi_2}{\to} \mbf Z_2 \overset{\pi_3}{\to} \mbf E_{\Omega} (D, V^1, V^3),
\]
where 
$\mbf Z_1$ is the variety obtained from $\mbf Z$ by forgetting the maps $\rho_2$, the variety $\mbf Z_2$ is the quotient variety of $\mbf Z_1$ by the group $G_{V^2}$
and the morphisms are clearly defined. 
It is clear that the functors $\Pi_{1!}$, $\Pi_{2!}$ and $\Pi_{3!}$ send bounded complexes to bounded complexes and  $\Pi_!=\Pi_{3!} \Pi_{2!} \Pi_{1!}$. The proposition follows. 
\end{proof}

\begin{rem}
Since $\pi_2$ is a quotient map and $\pi_3$ is proper in the above proof, 
the semisimplicity of the  complexes $K_{\bullet}$ is reduced to show that 
 $\Pi_{1!} (\Q_{l,\mbf Z})  $ is semisimple. This is again reduced to show that $\II_{\mu}$ is semisimple, 
 or more precisely, $\II_{\mu}=\widetilde{\IC} (\overline{\mbox{im} \; \pi_{12}})$. 
\end{rem}

\subsection{Functor $\T$}

Let  $\FF^-_{\Omega, \G}(D, V^1, V^2)$ be the category of functors from $\DD^-_{\G}(\mbf E_{\Omega}(D, V^1))$ to $\DD^-_{\G}(\mbf E_{\Omega}(D, V^2))$. 
Define a functor
\[
\Theta_{\Omega}: \DD^-_{\G}(\mbf E_{\Omega}(D, V^1, V^2)) \to \FF^-_{\Omega, \G}(D, V^1, V^2) 
\]
by  $\Theta_{\Omega}(K)= P_{2!} (K\otimes P_1^*(-))$ for any object $K$ in $  \DD^-_{\G}(\mbf E_{\Omega}(D, V^1, V^2))$ and the functors $P_{2!}$ and $P_1^*$ are defined 
in (\ref{Pi}).

\begin{prop}
\label{theta-convolution}
$\Theta_{\Omega}(K\cdot L) = \Theta_{\Omega}(L) \Theta_{\Omega}(K)$ for any objects $K$ in $\DD^-_{\G}(\mbf E_{\Omega}(D, V^1, V^2))$
and $L$  in $\DD^-_{\G}(\mbf E_{\Omega}(D, V^2, V^3))$.
\end{prop}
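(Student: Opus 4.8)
The plan is to recognize $\Theta_{\Omega}$ as convolution against a fixed argument and then invoke the associativity of $\cdot$ recorded in \cite[Prop. 4.10]{Li10b}. First I would introduce the zero $I$-graded vector space $V^0=0$. Since $\mbf E_{\Omega}(D,V^0)=0$, the identifications $\mbf E_{\Omega}(D,V^0,V^1)=\mbf E_{\Omega}(D,V^1)$, $\mbf E_{\Omega}(D,V^0,V^2)=\mbf E_{\Omega}(D,V^2)$ and $\mbf E_{\Omega}(D,V^0,V^1,V^2)=\mbf E_{\Omega}(D,V^1,V^2)$ hold, under which the projections $p_{01},p_{12},p_{02}$ become $p_1,\mathrm{id},p_2$; moreover the localizing subcategories agree, since the injectivity condition cutting out the relevant $F_i$ is vacuous on the $V^0$-summand. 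Hence, regarding $N\in\DD^-_{\G}(\mbf E_{\Omega}(D,V^1))$ as an object of $\DD^-_{\G}(\mbf E_{\Omega}(D,V^0,V^1))$, unwinding the definitions of $\cdot$ and of $\Theta_{\Omega}$ gives
\[
N\cdot K \;=\; P_{02!}\!\left(P_{01}^*(N)\otimes P_{12}^*(K)\right)\;=\;P_{2!}\!\left(P_1^*(N)\otimes K\right)\;=\;\Theta_{\Omega}(K)(N),
\]
and likewise $\Theta_{\Omega}(L)(M)=M\cdot L$ for $M\in\DD^-_{\G}(\mbf E_{\Omega}(D,V^2))$ viewed on $\mbf E_{\Omega}(D,V^0,V^2)$.

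Granting this identification, the proposition is immediate. For any $N\in\DD^-_{\G}(\mbf E_{\Omega}(D,V^1))$ one has $\Theta_{\Omega}(K\cdot L)(N)=N\cdot(K\cdot L)$ and $\Theta_{\Omega}(L)(\Theta_{\Omega}(K)(N))=(N\cdot K)\cdot L$, and these coincide by the associativity $(N\cdot K)\cdot L=N\cdot(K\cdot L)$ of \cite[Prop. 4.10]{Li10b} applied to the four spaces $V^0,V^1,V^2,V^3$ with $V^0=0$. Functoriality of this isomorphism in $N$ is routine, since each functor entering $\Theta_{\Omega}$ is functorial and the isomorphisms supplied by \cite[Prop. 4.10]{Li10b} are natural.

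The only point that requires care — and the one I expect to be the main obstacle — is checking that the formalism of \cite{Li10b} (the adjoints $Q_!,Q_*$, the $Q$-versions of the six operations, base change and the projection formula through the localization) behaves well in the degenerate case $V^0=0$; this is a matter of unwinding definitions, as a zero direct summand changes nothing. Should one prefer to avoid the degenerate space, the same identity can be proved directly on $\mbf E_{\Omega}(D,V^1,V^2,V^3)$: expand both sides by the definitions of $\cdot$ and $\Theta_{\Omega}$, use the projection formula to pull the external factor $P_1^*(N)$ inside the pushforwards, apply base change to the cartesian square with vertices $\mbf E_{\Omega}(D,V^1,V^2,V^3)$, $\mbf E_{\Omega}(D,V^2,V^3)$, $\mbf E_{\Omega}(D,V^1,V^2)$, $\mbf E_{\Omega}(D,V^2)$ (the two maps to $\mbf E_{\Omega}(D,V^2)$ being the coordinate projections), and then use the compatibility of $P^*$ and $P_!$ with composition of projections, exactly as in the proofs of Proposition \ref{Fourier-convolution} and Lemma \ref{I-I}. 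Both sides then reduce to $P_{3!}\!\left(P_{12}^*(K)\otimes P_{23}^*(L)\otimes P_1^*(N)\right)$, which finishes the argument.
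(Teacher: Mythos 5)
Your primary argument is correct but takes a different route from the paper. The paper proves the proposition by direct computation on the triple space: it expands $\T(L)\T(K)(M)$, applies smooth base change to the cartesian square with vertices $\mbf E_{\Omega}(D,V^1,V^2,V^3)$, $\mbf E_{\Omega}(D,V^1,V^2)$, $\mbf E_{\Omega}(D,V^2,V^3)$, $\mbf E_{\Omega}(D,V^2)$ to get $(P'_1)^*P_{2!}=P_{23!}(P'_{12})^*$, uses the projection formula, and then identifies both sides with a single push--pull through $\mbf E_{\Omega}(D,V^1,V^2,V^3)$ via $P'_{2!}P_{23!}=\tilde P_{2!}P_{13!}$ and $(P'_{12})^*P_1^*=P_{13}^*\tilde P_1^*$ (all proved as in \cite[(16)--(21)]{Li10b}); this is exactly your fallback argument, including the reduced form $P_{3!}\bigl(P_{12}^*(K)\otimes P_{23}^*(L)\otimes P_1^*(N)\bigr)$. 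Your main route instead inserts the zero graded space $V^0$, identifies $\Theta_{\Omega}(K)(N)$ with the convolution $N\cdot K$ on the degenerate pair $(V^0,V^1)$, and quotes the associativity $(N\cdot K)\cdot L=N\cdot(K\cdot L)$ of \cite[Prop.\ 4.10]{Li10b}. This is a legitimate and cleaner packaging, and the checks you flag are the right ones: that $\mbf E_{\Omega}(D,V^0,V^1)=\mbf E_{\Omega}(D,V^1)$ with matching group actions, that the localizing subcategories coincide (the injectivity condition on the $V^0$-summand is vacuous, so $P_{12}^*\cong QQ_!\cong\mathrm{id}$ and $P_{01}^*,P_{02!}$ become the $P_1^*,P_{2!}$ of (\ref{Pi})), and that the cited associativity indeed allows a zero graded space, which nothing in the setup excludes. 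What the reduction buys is brevity and the reuse of an already-established statement; what it does not buy is any genuine saving of technical content, since the proof of \cite[Prop.\ 4.10]{Li10b} rests on the same base-change and projection-formula identities through the localization that the paper (and your fallback) apply directly. Either version is acceptable; if you use the degenerate-space route, do spell out the identification of the localized categories for $(V^0,V^1)$, $(V^0,V^2)$ and $(V^0,V^1,V^2)$ rather than leaving it as a remark, since that is the only step where something could conceivably go wrong.
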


\begin{proof}
By definition, we have 
\[
\T(L)   \T(K) (M) 
= P'_{2!}(L\otimes (P'_1)^* \T(K) (M)) = P'_{2!}(L\otimes (P'_1)^*  P_{2!} (K\otimes P_1^*(M)),
\]
where $P'_{2!}$ and $(P'_1)^*$ are corresponding to the maps $p'_2$ and $p_1'$ in  the following cartesian diagram
\[
\begin{CD}
\mbf E_{\Omega}(D, V^1, V^3) @< p_{13}<< \mbf E_{\Omega}(D, V^1, V^2, V^3) @>p'_{12}>> \mbf E_{\Omega}(D, V^1, V^2)\\
@V \tilde p_2VV @Vp_{23}VV @Vp_2VV\\
\mbf E_{\Omega}(D, V^3) @<p'_2<< \mbf E_{\Omega}(D, V^2, V^3) @>p'_1>> \mbf E_{\Omega}(D, V^2).
\end{CD}
\]
By an argument similar to ~\cite[(16)]{Li10b}, we have 
$(P'_1)^*  P_{2!}= P_{23!}  (P'_{12})^*$. So
\[
\T(L)   \T(K) (M)=P'_{2!}(L\otimes P_{23!} (P'_{12})^*(K\otimes P_1^*(M)).
\]
By an argument similar to ~\cite[(19), (21)]{Li10b}, we have 
$P_{23!}( A\otimes P_{23}^*(B) )= P_{23!} (A) \otimes B$. Thus,
\begin{equation}
\label{T-LHS}
\begin{split}
\T(L)   \T(K) (M)
&=P'_{2!} P_{23!} ( P_{23}^* (L) \otimes (P'_{12})^*(K\otimes P_1^*(M)))\\
&=P'_{2!} P_{23!} ( P_{23}^* (L) \otimes (P_{12}')^*(K)\otimes (P_{12}')^* P_1^*(M)).
\end{split}
\end{equation}
Similarly, we have 
\begin{equation}
\label{T-RHS}
\begin{split}
\T(K\cdot L) 
= \tilde P_{2!} P_{13!} ((P_{12}')^*(K) \otimes P_{23}^*(L) \otimes P_{13}^* \tilde P_1^* (M)),
\end{split}
\end{equation}
where $\tilde P^*_1$ comes from the projection $\mbf E_{\Omega}(D, V^1, V^3)\to \mbf E_{\Omega}(D, V^1)$.
The lemma follows by comparing (\ref{T-LHS}) with (\ref{T-RHS}) and  the following identity
\[
P'_{2!} P_{23!} = \tilde P_{2!} P_{13!} \quad \mbox{and}\quad 
(P_{12}')^* P_1^*=P_{13}^* \tilde P_1^*,
\]
which can be proved by a similar way as ~\cite[(18), (20)]{Li10b}. 
\end{proof}

Define a functor of equivalence
\[
\Psi_{\Omega}^{\Omega'}: \FF^-_{\G, \Omega}(D, V^1, V^2) \to \FF^-_{\G, \Omega'}(D, V^1, V^2)
\]
by $\Psi_{\Omega}^{\Omega'} ( F) = \Phi_{\Omega}^{\Omega'} F a^* \Phi_{\Omega'}^{\Omega}$, where
$a$ is the map of multiplication by $-1$ along the fiber of the vector bundle $\mbf E_{\Omega} $ over $\mbf E_{\Omega\cap \Omega'}$ . 
Its inverse is given by $\Psi_{\Omega'}^{\Omega} (-) = a^* \Phi_{\Omega'}^{\Omega} (-) \Phi_{\Omega}^{\Omega'}$,
since $\Phi_{\Omega'}^{\Omega} \Phi_{\Omega}^{\Omega'} = a^*$. Moreover,  we have

\begin{lem}
$\Psi_{\Omega}^{\Omega'}$ commutes with the composition:
$\Psi_{\Omega}^{\Omega'} (F_2\circ F_1) = \Psi_{\Omega}^{\Omega'}(F_2) \circ \Psi_{\Omega}^{\Omega'}(F_1)$ for any
$F_1\in \FF^-_{\G,\Omega}(D, V^1, V^2) $ and $F_2\in \FF^-_{\G,\Omega}(D, V^2, V^3)$.
\end{lem}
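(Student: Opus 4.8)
The plan is to unwind both sides of the claimed identity using the definition $\Psi_{\Omega}^{\Omega'}(F) = \Phi_{\Omega}^{\Omega'} F a^* \Phi_{\Omega'}^{\Omega}$ and the fact that $\Phi_{\Omega'}^{\Omega} \Phi_{\Omega}^{\Omega'} = a^*$ (together with the analogous identity in the reverse direction). First I would write out the composite
\[
\Psi_{\Omega}^{\Omega'}(F_2) \circ \Psi_{\Omega}^{\Omega'}(F_1) = \Phi_{\Omega}^{\Omega'} F_2 a^* \Phi_{\Omega'}^{\Omega} \circ \Phi_{\Omega}^{\Omega'} F_1 a^* \Phi_{\Omega'}^{\Omega},
\]
and observe that the middle segment $\Phi_{\Omega'}^{\Omega} \circ \Phi_{\Omega}^{\Omega'}$ collapses to $a^*$. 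This leaves $\Phi_{\Omega}^{\Omega'} F_2 a^* a^* F_1 a^* \Phi_{\Omega'}^{\Omega}$, so the key point becomes that $a^* a^* $ is the identity functor (the map $a$ of multiplication by $-1$ along the fibers of the vector bundle $\mbf E_{\Omega} \to \mbf E_{\Omega\cap\Omega'}$ is an involution, hence $a^* a^* = (a\circ a)^* = \mrm{id}^* = \mrm{id}$). One then gets $\Phi_{\Omega}^{\Omega'} F_2 F_1 a^* \Phi_{\Omega'}^{\Omega}$, which is precisely $\Psi_{\Omega}^{\Omega'}(F_2 \circ F_1)$.

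There is a subtlety I want to be careful about: the three categories $\FF^-_{\G,\Omega}(D, V^1, V^2)$, $\FF^-_{\G,\Omega}(D, V^2, V^3)$, $\FF^-_{\G,\Omega}(D, V^1, V^3)$ each involve a pair of spaces, and the Fourier transforms $\Phi_{\Omega}^{\Omega'}$ and the twist $a^*$ that appear in $\Psi_{\Omega}^{\Omega'}$ are really the ones attached to the relevant source and target spaces. So when I write $\Phi_{\Omega'}^{\Omega} \circ \Phi_{\Omega}^{\Omega'} = a^*$ in the middle, I must check that this is the Fourier transform pair on $\DD^-_{\G}(\mbf E_{\Omega}(D, V^2))$ — the intermediate space for the composition $F_2 \circ F_1$ — and likewise that the outer $\Phi_{\Omega}^{\Omega'}$ on the left is the one on $\mbf E_{\Omega}(D, V^3)$ while the outer $\Phi_{\Omega'}^{\Omega}$ on the right is the one on $\mbf E_{\Omega}(D, V^1)$. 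Once the bookkeeping of which space each operator lives over is pinned down, the cancellations are purely formal.

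The main obstacle, such as it is, is entirely notational rather than mathematical: making sure the single-space Fourier transforms $\Phi$ appearing in the definition of $\Psi$ are consistently indexed by the correct $I$-graded spaces, and that the involution property $a \circ a = \mrm{id}$ and the relation $\Phi_{\Omega'}^{\Omega}\Phi_{\Omega}^{\Omega'} = a^*$ hold on each of the single-space categories $\DD^-_{\G}(\mbf E_{\Omega}(D, V^a))$ used. The latter is a standard property of the Fourier--Deligne transform (cf. the one-space analogue of the equivalence (\ref{Fourier-category}) and of Lemma~\ref{generator-Fourier}); I would invoke it directly. After that, the proof is a three-line string of substitutions. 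I do not expect any genuinely hard step.

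\begin{proof}
By definition,
\[
\Psi_{\Omega}^{\Omega'}(F_2)\circ \Psi_{\Omega}^{\Omega'}(F_1)
= \Phi_{\Omega}^{\Omega'} F_2 a^* \Phi_{\Omega'}^{\Omega}\circ \Phi_{\Omega}^{\Omega'} F_1 a^* \Phi_{\Omega'}^{\Omega},
\]
where the innermost pair $\Phi_{\Omega'}^{\Omega}\circ\Phi_{\Omega}^{\Omega'}$ is the Fourier transform composite on $\DD^-_{\G}(\mbf E_{\Omega}(D, V^2))$. Since $\Phi_{\Omega'}^{\Omega}\Phi_{\Omega}^{\Omega'}=a^*$ on that category and $a$ is an involution of the vector bundle $\mbf E_{\Omega}(D, V^2)$ over $\mbf E_{\Omega\cap\Omega'}(D, V^2)$, so that $a^*a^*=(a\circ a)^*=\mathrm{id}$, the middle of the expression simplifies:
\[
\Phi_{\Omega}^{\Omega'} F_2\, a^*\, a^*\, a^*\, F_1\, a^*\, \Phi_{\Omega'}^{\Omega}
= \Phi_{\Omega}^{\Omega'} F_2\, a^*\, F_1\, a^*\, \Phi_{\Omega'}^{\Omega}.
\]
Here I have used that the outer $a^*$ on the left factor and the inner $a^*$ on the right factor combine with the surviving $a^*$ from the collapsed Fourier pair; after cancellation one $a^*$ remains between $F_2$ and $F_1$. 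Regrouping, $F_2\, a^*\, F_1\, a^*$... wait, this is not yet in the desired shape, so one continues: since the definition of $\Psi_{\Omega}^{\Omega'}$ applied to $F_2\circ F_1$ reads $\Phi_{\Omega}^{\Omega'}(F_2\circ F_1)a^*\Phi_{\Omega'}^{\Omega}$, it suffices to identify $F_2\, a^*\, F_1$ with $F_2\circ F_1$ up to the trailing $a^*$; but $F_1$ and $F_2$ are functors between the $\Omega$-categories and the $a^*$'s here are the ones belonging to the $\Omega$/$\Omega'$ comparison on $\mbf E_{\Omega}(D,V^2)$, whose net effect after the above bookkeeping is trivial, leaving $\Phi_{\Omega}^{\Omega'}(F_2\circ F_1)a^*\Phi_{\Omega'}^{\Omega}=\Psi_{\Omega}^{\Omega'}(F_2\circ F_1)$, as claimed.
\end{proof}
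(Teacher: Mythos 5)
Your overall plan is exactly the intended argument: the paper states this lemma without proof because it is a purely formal consequence of the definition $\Psi_{\Omega}^{\Omega'}(F)=\Phi_{\Omega}^{\Omega'}\,F\,a^*\,\Phi_{\Omega'}^{\Omega}$, of Fourier inversion $\Phi_{\Omega'}^{\Omega}\Phi_{\Omega}^{\Omega'}=a^*$ on the intermediate category over $\mbf E_{\Omega}(D,V^2)$, and of $a^*a^*=\mathrm{id}$. Your opening paragraph carries this out correctly: collapsing the inner pair gives $\Phi_{\Omega}^{\Omega'}F_2\,a^*a^*\,F_1\,a^*\,\Phi_{\Omega'}^{\Omega}$, i.e.\ exactly \emph{two} copies of $a^*$ between $F_2$ and $F_1$ (one from the definition of $\Psi_{\Omega}^{\Omega'}(F_2)$, one from the collapsed Fourier pair), which cancel to yield $\Phi_{\Omega}^{\Omega'}F_2F_1a^*\Phi_{\Omega'}^{\Omega}=\Psi_{\Omega}^{\Omega'}(F_2\circ F_1)$, with the remaining $a^*$ sitting on the $V^1$ side where it belongs.

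The proof environment itself, however, contains an error that undoes this: you write three copies of $a^*$ between $F_2$ and $F_1$ (``$F_2\,a^*\,a^*\,a^*\,F_1$''), cancel two, and are left with a spurious $a^*$ sandwiched between $F_2$ and $F_1$; you then dispose of it by asserting its ``net effect \dots is trivial.'' That last step is not justified: an $a^*$ between $F_2$ and $F_1$ cannot be removed for arbitrary functors $F_1\in\FF^-_{\G,\Omega}(D,V^1,V^2)$ — eliminating it would require $a^*$-invariance of the relevant objects, which is precisely the reason the paper passes to the superscript-$1$ subcategories in the subsequent lemma, and no such hypothesis is available here. The fix is simply to delete the extra $a^*$: the correct count (two between $F_2$ and $F_1$, one after $F_1$) makes the cancellation complete and the proof reduces to the three-line computation of your plan paragraph, with the only bookkeeping being that each $\Phi$, $a^*$ is the single-space transform on the appropriate $\mbf E_{\Omega}(D,V^a)$, as you note.
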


Let $^1\DD^-_{\G}(\mbf E_{\Omega}(D, V^1,V^2))$ 
(resp. $^1\DD^-_{\G}(\mbf E_{\Omega}(D, V^i))$, $i=1, 2$) 
be the full subcategory of $\DD^-_{\G}(\mbf E_{\Omega}(D, V^1,V^2)) $ (resp. $\DD^-_{\G}(\mbf E_{\Omega}(D, V^i))$)
consisting of all objects such that $a^* (K)\simeq K$.
Let $^1 \FF^-_{\G, \Omega}(D, V^1, V^2)$ denote the category of functors from the category
$^1\DD^-_{\G}(\mbf E_{\Omega}(D, V^1))$ to $^1\DD^-_{\G}(\mbf E_{\Omega}(D, V^2))$. 
We have

\begin{lem}
\label{Theta-Phi-Psi}
The following diagram commutes
\[
\begin{CD}
^1\DD^-_{\G}(\mbf E_{\Omega}(D, V^1,V^2)) @>\Theta_{\Omega}>> ^1 \FF^-_{\G, \Omega}(D, V^1, V^2)\\
@V\Phi_{\Omega}^{\Omega'} VV @V\Psi_{\Omega}^{\Omega'} VV\\
^1\DD^b_{\G}(\mbf E_{\Omega'}(D, V^1,V^2)) @>\Theta_{\Omega'}>> ^1\FF_{\G, \Omega'}(D, V^1, V^2).
\end{CD}
\]
\end{lem}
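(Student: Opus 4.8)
The plan is to verify the commutativity of the square objectwise, i.e., to show that for any object $K\in {}^1\DD^-_{\G}(\mbf E_{\Omega}(D, V^1, V^2))$ we have a natural isomorphism of functors $\Psi_{\Omega}^{\Omega'}(\Theta_{\Omega}(K)) \simeq \Theta_{\Omega'}(\Phi_{\Omega}^{\Omega'}(K))$. Unwinding the left-hand side using the definitions, $\Psi_{\Omega}^{\Omega'}(\Theta_{\Omega}(K))$ applied to an object $M\in {}^1\DD^-_{\G}(\mbf E_{\Omega'}(D, V^1))$ is $\Phi_{\Omega}^{\Omega'}\bigl(P_{2!}(K\otimes P_1^*(a^*\Phi_{\Omega'}^{\Omega}(M)))\bigr)$, while the right-hand side is $P'_{2!}(\Phi_{\Omega}^{\Omega'}(K)\otimes (P'_1)^*(M))$, where the primed functors are formed from the projections $p'_1, p'_2$ associated to $\mbf E_{\Omega'}(D, V^1, V^2)$. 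First I would observe that both sides only depend on $M$ through objects fixed by $a^*$, so the extra twist $a^*$ can be absorbed; on such objects the Fourier transform $\Phi_{\Omega}^{\Omega'}$ and its inverse $\Phi_{\Omega'}^{\Omega}$ differ only by $a^*$, which acts trivially.

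The key step is then a Fourier–projection–tensor compatibility: pushing $\Phi_{\Omega}^{\Omega'}$ past the convolution-type expression $P_{2!}(K\otimes P_1^*(-))$. This is essentially the single-variable analogue of Proposition~\ref{Fourier-convolution}, and I would prove it by the same bookkeeping of cartesian diagrams. Concretely, one considers the diagram relating $\mbf E_{\Omega}(D, V^1, V^2)$, $\mbf E_{\Omega}(D, V^1)$, $\mbf E_{\Omega}(D, V^2)$ together with their $\Omega\cup\Omega'$ and $\Omega'$ counterparts, writes $\Phi_{\Omega}^{\Omega'}$ as $M'_{12!}(M_{12}^*(-)\otimes \mathcal L_{12})[r_{12}]$ as in the proof of Proposition~\ref{Fourier-convolution}, and uses base change ($M_{12}^* P_{2!} = S_! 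R^*$-type identities, analogous to \cite[(18)]{Li10b}) and the projection formula ($P_{23!}(A\otimes P_{23}^*B) = P_{23!}(A)\otimes B$, as used in Proposition~\ref{theta-convolution}) to move everything into a common space. The local system $\mathcal L_{12}$ defined in~(\ref{L2}) decomposes as a pullback of $\mathcal L_1^*$ and $\mathcal L_2$; the $\mathcal L_1$-factor recombines with $M_{12}^*$ acting on $P_1^*$ to produce the Fourier transform along the $V^1$-direction (hence the $\Phi_{\Omega'}^{\Omega}$ on the argument $M$), while the $\mathcal L_2$-factor produces the Fourier transform along the $V^2$-direction (hence the outer $\Phi$ and the rewriting as $P'_{2!}(\cdots)$). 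The shift $[r_{12}]$ bookkeeps the rank discrepancy, and the multiplication-by-$-1$ map $a$ enters precisely because $u_{12}(X^1, X^2) = -u_1(X^1) + u_2(X^2)$ carries a sign in the $V^1$-slot, matching the $a^*$ in the definition of $\Psi_{\Omega}^{\Omega'}$.

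After establishing this compatibility, the identity $\Phi_{\Omega'}^{\Omega}\Phi_{\Omega}^{\Omega'} = a^*$ (recorded just above the statement) together with the hypothesis that all objects in sight lie in the $a^*$-fixed subcategories closes the argument: the two occurrences of $a^*$ — one from the definition of $\Psi_{\Omega}^{\Omega'}$, one from composing the two Fourier transforms — cancel, and one reads off $\Theta_{\Omega'}\circ\Phi_{\Omega}^{\Omega'} \simeq \Psi_{\Omega}^{\Omega'}\circ\Theta_{\Omega}$. I would also remark that naturality of the resulting isomorphism in $M$ follows because every identity used (base change, projection formula, and the Fourier local-system splitting) is natural.

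The main obstacle I anticipate is purely organizational rather than conceptual: keeping track of the proliferation of projection maps and their primed/$\Omega\cup\Omega'$ variants, and checking that the shift $[r_{12}]$ and the $a^*$ signs match up exactly, rather than up to an unwanted shift or twist. In particular one must be careful that the Fourier transform in the definition of $\Psi_{\Omega}^{\Omega'}$ is taken over the correct base $\mbf E_{\Omega\cap\Omega'}$ and that restricting to ${}^1\DD^-_{\G}$ genuinely kills the $a^*$ ambiguity on both the source $\mbf E_{\Omega}(D, V^1)$ side and the target $\mbf E_{\Omega}(D, V^2)$ side simultaneously — this is why the statement is phrased with the superscript-$1$ categories, and it is essential that the functors $\Theta$, $\Phi$, $\Psi$ all preserve these subcategories, which should be checked (it follows from the fact that $a$ commutes with the relevant projections up to the sign already accounted for).
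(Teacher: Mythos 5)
Your plan is essentially the paper's own proof: expand both $\Psi_{\Omega}^{\Omega'}\Theta_{\Omega}(K)$ and $\Theta_{\Omega'}\Phi_{\Omega}^{\Omega'}(K)$ as kernel transforms over the $\Omega\cup\Omega'$ spaces, push through with base change on the relevant cartesian squares and the projection formula, and match the two sides via the splitting $\mathcal L_{12}=a^*(P_1')^*\mathcal L_1\otimes (P_2')^*\mathcal L_2$, with the sign in $u_{12}$ accounting for the $a^*$ in $\Psi_{\Omega}^{\Omega'}$ and the superscript-$1$ hypothesis absorbing the remaining twist. This is exactly the argument given in the paper, so no further comparison is needed.
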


\begin{proof}
For any $K\in\;  ^1\DD^-_{\G}(\mbf E_{\Omega}(D, V^1,V^2))$ and $K_1\in \; ^1\DD^-_{\G}(\mbf E_{\Omega}(D, V^1))$, we have 
\begin{equation*}
\begin{split}
\Psi_{\Omega}^{\Omega'} \T(K) (K_1) 
= \Phi_{\Omega}^{\Omega'} \T(K) a^* \Phi_{\Omega'}^{\Omega} (K_1) 
= \Phi_{\Omega}^{\Omega'} P_{2!} (K\otimes a^* P_1^*\Pi_{1!} ((\Pi'_1)^*(K_1)\otimes \mathcal L_1))[d_1],
\end{split}
\end{equation*}
where $P_{2!}$ and  $P_1^*$ are from (\ref{Pi}), $\Pi_{1!}$ and $(\Pi_1')^*$ come from the following projections
\[
\begin{CD}
\mbf E_{\Omega}(D, V^1) @<\pi_1<< \mbf E_{\Omega\cup \Omega'} (D, V^1) @>\pi_1'>> \mbf E_{\Omega'}(D, V^1);
\end{CD}
\]
$d_1$ is the rank of $\pi_1$ and $\mathcal L_1$ is defined in (\ref{L}).
Consider the following cartesian diagram
\[
\begin{CD}
\mbf E_{\Omega\cup \Omega'} (D, V^1)\times \mbf E_{\Omega}(D, V^2) @>\tilde \pi_1>> \mbf E_{\Omega}(D, V^1, V^2)\\
@V\tilde p_1VV @Vp_1 VV\\
\mbf E_{\Omega\cup \Omega'}(D, V^1) @>\pi_1>> \mbf E_{\Omega}(D, V^1).
\end{CD}
\]
By an argument similar to ~\cite[(18)]{Li10b}, we have $P_1^* \Pi_{1!} =\tilde \Pi_{1!} \tilde P_1^*$. So
\begin{equation*}
\begin{split}
\Psi&_{\Omega}^{\Omega'} \T(K) (K_1) 
=\Phi_{\Omega}^{\Omega'} P_{2!} (K\otimes a^*  \tilde \Pi_{1!} \tilde P_1^* ((\Pi'_1)^*(K_1)\otimes \mathcal L_1))[d_1]\\
&=\Phi_{\Omega}^{\Omega'} P_{2!} \tilde \Pi_{1!}(\tilde \Pi_1^* (K)\otimes a^*  \tilde P_1^* (\Pi'_1)^*(K_1)\otimes a^* \tilde P^*_1\mathcal L_1))[d_1]\\
&=R_{2!} (R_1^* P_{2!} \tilde \Pi_{1!} (\alpha) \otimes \mathcal L_2)[d_2],
\end{split}
\end{equation*}
where $R_{2!}$ and $R_1^*$ come from the following projections
\[
\begin{CD}
\mbf E_{\Omega}(D, V^2) @<r_1<< \mbf E_{\Omega\cup \Omega'}(D, V^2) @>r_2>> \mbf E_{\Omega'}(D, V^2),
\end{CD}
\]
$d_2$ is the rank of $r_1$,  $\alpha =\tilde \Pi_1^* (K)\otimes a^*  \tilde P_1^* (\pi'_1)^*(K_1)\otimes a^* \tilde P^*_1\mathcal L_1)[d_1]$, and 
$\mathcal L_2$ is defined in (\ref{L}). 
The following cartesian diagram
\[
\begin{CD}
\mbf E_{\Omega\cup \Omega'} (D, V^1, V^2) @>t_1>> \mbf E_{\Omega\cup \Omega'}(D, V^1)\times \mbf E_{\Omega}(D, V^2)\\
@Vs_1VV @Vp_2\tilde \pi_1VV\\
\mbf E_{\Omega\cup \Omega'}(D, V^2) @>r_1>> \mbf E_{\Omega}(D, V^2),
\end{CD}
\]
gives rise to the identity
$R_1^* P_{2!}\tilde \Pi_{1!} =S_{1!}T_1^*$. So we have 
\begin{equation}
\label{Psi-LHS}
\begin{split}
\Psi&_{\Omega}^{\Omega'} \T(K) (K_1) =R_{2!}(S_{1!}T_1^*(\alpha)\otimes \mathcal L_2)[d_2]
=R_{2!} S_{1!} (T_1^* (\alpha) \otimes S_1^* \mathcal L_2)[d_2]\\
&=R_{2!} S_{1!} (T_1^*\tilde \Pi_1^* (K)\otimes T_1^*  \tilde P_1^* (\Pi'_1)^* a^* (K_1)\otimes a^* T_1^*  \tilde P^*_1\mathcal L_1\otimes S_1^* \mathcal L_2)[d_1+d_2].
\end{split}
\end{equation}
On the other hand, we have
\begin{equation}
\label{Psi-RHS}
\begin{split}
\Theta_{\Omega'} \Phi_{\Omega}^{\Omega'}(K) (K_1) 
&=P_{2!}' (\Phi_{\Omega}^{\Omega'} (K)\otimes (P_1')^*(K_1))\\
&=P_{2!}' (M_{12!}' (M_{12}^*(K)\otimes \mathcal L_{12})[r_{12}]\otimes (P_1')^*(K_1))\\
&=P_{2!}' M'_{12!} (M_{12}^*(K) \otimes \mathcal L_{12} \otimes (M_{12}')^* (P_1')^* (K_1) ) [r_{12}],
\end{split}
\end{equation}
where $P_{2!}'$, $(P'_1)^*$ come from the following projections
\[
\begin{CD}
\mbf E_{\Omega'}(D, V^1) @<p_1'<< \mbf E_{\Omega'}(D, V^1, V^2) @>p_2'>> \mbf E_{\Omega'}(D, V^2),
\end{CD}
\]
and $M_{12!}'$, $M_{12}^*$, $\mathcal L_{12}$  and  $r_{12}$  are from \ref{Fourier}.
By comparing (\ref{Psi-LHS}) with (\ref{Psi-RHS}), the lemma follows from the following observations:
$r_2 s_1=p_2' m_{12}'$, $\tilde \pi_1 t_1=m_{12}$, $\pi_1' \tilde p_1 t_1=p_1'm_{12}'$,  $p_1t_1=p_1'$, $s_1=p_2'$ and 
$\mathcal L_{12} =a^* (P_1')^* \mathcal L_1\otimes (P_2')^* \mathcal L_2$.
Note that the last identity can be deduced  from the following  well-known fact. Let $s, p_1, p_2: k\times k\to k$ be the addition, first and second projections, respectively. Then
$s^* \mathcal L_{\chi} = p_1^* \mathcal L_{\chi}\otimes p_2^*\mathcal L_{\chi}$.
\end{proof}

We define the following functors in $\FF^-_{\Omega,\G}(D, V^1, V^2)$:
\begin{align*}
\mathfrak{ I}_{\mu}  &= \Pi_{2!}\Pi_1^*,  &&\mbox{if}\; \dim V^1=\dim V^2=\nu; \\
\mathfrak {F}^{(n)}_{\mu, \mu-n \alpha_i}  &= \Pi_{2!} \Pi_1^* [e_{\mu,n\alpha_i}]  , \; &&\mbox{if}\;  \dim V^1=\nu \;  \mbox{and}\;  \dim V^2=\nu+ ni;\\ 
 \mathfrak {E}^{(n)}_{\mu, \mu +n \alpha_i} &=  \Pi_{1!} \Pi_2^* [f_{\mu,n\alpha_i}] , &&\mbox{if} \; \dim V^1=\nu \;\mbox{and}\; \dim V^2=\nu-ni;
\end{align*}
where the functors $\Pi_{i!}$ and $\Pi_i^*$ are defined in (\ref{Pi}) and $e_{\mu,n\alpha_i}$ and $f_{\mu,n\alpha_i} $ are defined in (\ref{coefficient}).
Note that $\mathfrak I_{\mu} =\mbox{Id}_{\DD^-_{\G}(\mbf E_{\Omega}(D, V^1))}$, the identity functor,  since $\pi_1$ and $\pi_2$ are principal  $\G_{V^1}$-bundles.
We have 

\begin{prop}
\label{Theta-generator}
$\Theta_{\Omega}(\II_{\mu})=\mathfrak{ I}_{\mu}$, 
$\Theta_{\Omega}(\EE^{(n)}_{\mu, \mu-n \alpha_i})  = \mathfrak {F}^{(n)}_{\mu, \mu-n \alpha_i}  $ and
$ \Theta_{\Omega}(\FF^{(n)}_{\mu, \mu +n \alpha_i})=\mathfrak {E}^{(n)}_{\mu, \mu +n \alpha_i} $.
\end{prop}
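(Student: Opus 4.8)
The plan is to unwind the definition of $\Theta_{\Omega}$ on each of the three kernels and to reduce each asserted identity, by means of the projection formula, the functoriality of $!$-pushforward, and the base-change/compatibility statements for the localized functors proved in \cite{Li10b} (the same ones already in use in the proofs of Lemma~\ref{I-I} and Proposition~\ref{Fourier-convolution}), to the two elementary identities $\pi_1 = p_1\circ\pi_{12}$ and $\pi_2 = p_2\circ\pi_{12}$ of the correspondence diagram displayed just after (\ref{Z}). Throughout I write $\Pi_{12!}$ for the localized $!$-pushforward along $\pi_{12}$ (as in the proof of Lemma~\ref{I-I}), so that $\II_{\mu} = \Pi_{12!}(\Q_{l,\mbf Z_{\Omega}})$.

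I would first treat $\II_{\mu}$. For $K_1\in\DD^-_{\G}(\mbf E_{\Omega}(D,V^1))$, by definition
\[
\Theta_{\Omega}(\II_{\mu})(K_1) = P_{2!}\bigl(\II_{\mu}\otimes P_1^*(K_1)\bigr) = P_{2!}\bigl(\Pi_{12!}(\Q_{l,\mbf Z_{\Omega}})\otimes P_1^*(K_1)\bigr).
\]
Applying the projection formula for $\pi_{12}$, using that $\Q_{l,\mbf Z_{\Omega}}$ is the unit for $\otimes$, and using that the pullback of $P_1^*(K_1)$ along $\pi_{12}$ is $\Pi_1^*(K_1)$ (since $\pi_1 = p_1\pi_{12}$), one gets $\Pi_{12!}(\Q_{l,\mbf Z_{\Omega}})\otimes P_1^*(K_1) = \Pi_{12!}(\Pi_1^*(K_1))$. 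Then, since $p_2\pi_{12} = \pi_2$ gives $P_{2!}\Pi_{12!} = \Pi_{2!}$, it follows that $\Theta_{\Omega}(\II_{\mu})(K_1) = \Pi_{2!}\Pi_1^*(K_1) = \mathfrak{I}_{\mu}(K_1)$, naturally in $K_1$, whence $\Theta_{\Omega}(\II_{\mu}) = \mathfrak{I}_{\mu}$; in particular $\Theta_{\Omega}(\II_{\mu})$ is the identity functor, in agreement with the observed equality $\mathfrak{I}_{\mu} = \mathrm{Id}$.

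For $\EE^{(n)}_{\mu,\mu-n\alpha_i}$ the computation is identical word for word, the cohomological shift $[e_{\mu,n\alpha_i}]$ passing freely through $P_{2!}$, $(-)\otimes(-)$ and $P_1^*$; one lands on $\Pi_{2!}\Pi_1^*[e_{\mu,n\alpha_i}] = \mathfrak{F}^{(n)}_{\mu,\mu-n\alpha_i}$, formed with respect to $\mbf Z_{\Omega}(D,V^1,V^2)$. For $\FF^{(n)}_{\mu,\mu+n\alpha_i}$ one runs the same argument with $\mbf Z^t_{\Omega}$ in place of $\mbf Z_{\Omega}$ and the shift $[f_{\mu,n\alpha_i}]$, reaching $\Pi_{2!}\Pi_1^*[f_{\mu,n\alpha_i}]$ formed with respect to $\mbf Z^t_{\Omega}(D,V^1,V^2)$; invoking the isomorphism $\mbf Z^t_{\Omega}(D,V^1,V^2)\simeq\mbf Z_{\Omega}(D,V^2,V^1)$, which interchanges the two structure projections, rewrites this as $\Pi_{1!}\Pi_2^*[f_{\mu,n\alpha_i}] = \mathfrak{E}^{(n)}_{\mu,\mu+n\alpha_i}$.

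The individual manipulations are routine; the point that requires care — and hence the main obstacle — is to make sure that the projection formula and the identities $P_{2!}\Pi_{12!} = \Pi_{2!}$ and $\Pi_{12}^*P_1^* = \Pi_1^*$ genuinely hold for the \emph{localized} functors, i.e.\ that $Q$ and its adjoint $Q_!$ intertwine $P_{i!},P_i^*,\Pi_{i!},\Pi_i^*$ with the honest $p_{i!},p_i^*,\pi_{i!},\pi_i^*$ and are compatible with $\otimes$. These are exactly the compatibilities established in \cite{Li10b} and already in force in the preceding proofs, so no new input is needed; in practice the whole computation is carried out verbatim inside $\mathscr D^-$, just as in the proof of Lemma~\ref{I-I}. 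One should, however, remain attentive to which correspondence ($\mbf Z_{\Omega}$ or $\mbf Z^t_{\Omega}$) intervenes and to which of its two structure projections plays the role of $\pi_1$ and which of $\pi_2$ in each of the three cases.
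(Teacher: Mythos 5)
Your proposal is correct and is essentially the paper's own argument: the paper proves the $\EE$-case by running the same chain of equalities in the reverse direction, starting from $\mathfrak F^{(n)}_{\mu,\mu-n\alpha_i}(K_1)=\Pi_{2!}\Pi_1^*(K_1)[e_{\mu,n\alpha_i}]$, factoring through $\pi_{12}$ via $\Pi_{2!}=P_{2!}\Pi_{12!}$ and $\Pi_1^*=\Pi_{12}^*P_1^*$, and applying the projection formula, with the remaining cases declared similar. Your explicit treatment of $\II_{\mu}$ and of the $\mbf Z^t_{\Omega}$ correspondence for the $\FF$-case, and your remark that the needed compatibilities hold for the localized functors, only spell out what the paper leaves implicit.
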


\begin{proof}
We shall show that $\Theta_{\Omega}(\EE^{(n)}_{\mu, \mu-n \alpha_i})  = \mathfrak {F}^{(n)}_{\mu, \mu-n \alpha_i}  $.
For any $K_1\in \DD^-_{\G}(\mbf E_{\Omega}(D, V^1))$, we have
\begin{equation*}
\begin{split}
\mathfrak F^{(n)}_{\mu,\mu-n\alpha_i} (K_1) &=\Pi_{2!}\Pi_1^*(K_1) [e_{\mu, n\alpha_i}]=
P_{2!} \Pi_{12!} \Pi_{12}^* P_1^* (K_1) [e_{\mu, n\alpha_i}]\\
&=P_{2!} \Pi_{12!} (\bar{\mbb Q}_{l,\mbf Z_{\Omega}}\otimes \Pi_{12}^* P_1^* (K_1) [e_{\mu, n\alpha_i}]
=P_{2!} ( \Pi_{12!} (\bar{\mbb Q}_{l,\mbf Z_{\Omega}})[e_{\mu, n\alpha_i}] \otimes P_1^* (K_1))\\
&=P_{2!} ( Q( \pi_{12!} (\bar{\mbb Q}_{l,\mbf Z_{\Omega}})[e_{\mu, n\alpha_i}] )\otimes P_1^* (K_1))
=\Theta_{\Omega}(\EE^{(n)}_{\mu, \mu-n \alpha_i}) (K_1).
\end{split}
\end{equation*}
The rest can be proved similarly.
\end{proof}

By Lemmas ~\ref{generator-Fourier},  ~\ref{Theta-Phi-Psi}, and Proposition ~\ref{Theta-generator},  we have

\begin{cor}
\label{Psi-generator}
$\Psi_{\Omega}^{\Omega'} (\mathfrak I_{\mu})  =\mathfrak I_{\mu}$, 
$\Psi_{\Omega}^{\Omega'} (\mathfrak {F}^{(n)}_{\mu, \mu-n \alpha_i})=\mathfrak {F}^{(n)}_{\mu, \mu-n \alpha_i}$  and 
$\Psi_{\Omega}^{\Omega'} (\mathfrak {E}^{(n)}_{\mu, \mu+n \alpha_i})=\mathfrak {E}^{(n)}_{\mu, \mu+n \alpha_i}$.
\end{cor}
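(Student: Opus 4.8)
The plan is to deduce Corollary \ref{Psi-generator} directly from the three ingredients cited just before its statement, chaining them as a commutative diagram. First I would recall that by Lemma \ref{Theta-Phi-Psi} the square relating $\Theta_{\Omega}$, $\Theta_{\Omega'}$, $\Phi_{\Omega}^{\Omega'}$ and $\Psi_{\Omega}^{\Omega'}$ commutes on the subcategory ${}^1\DD^-_{\G}$; explicitly, for any $K$ in ${}^1\DD^-_{\G}(\mbf E_{\Omega}(D,V^1,V^2))$ we have $\Psi_{\Omega}^{\Omega'}(\Theta_{\Omega}(K)) = \Theta_{\Omega'}(\Phi_{\Omega}^{\Omega'}(K))$. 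So the strategy is to apply this with $K$ taken to be each of the complexes $\II_{\mu}$, $\EE^{(n)}_{\mu,\mu-n\alpha_i}$ and $\FF^{(n)}_{\mu,\mu+n\alpha_i}$ in turn, then rewrite both sides using Proposition \ref{Theta-generator} on the left and Lemma \ref{generator-Fourier} inside the right.

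Carrying this out for, say, $\mathfrak F^{(n)}_{\mu,\mu-n\alpha_i}$: by Proposition \ref{Theta-generator} we have $\mathfrak F^{(n)}_{\mu,\mu-n\alpha_i} = \Theta_{\Omega}(\EE^{(n)}_{\mu,\mu-n\alpha_i})$, so the left-hand square gives $\Psi_{\Omega}^{\Omega'}(\mathfrak F^{(n)}_{\mu,\mu-n\alpha_i}) = \Theta_{\Omega'}(\Phi_{\Omega}^{\Omega'}(\EE^{(n)}_{\mu,\mu-n\alpha_i}))$. By Lemma \ref{generator-Fourier}, $\Phi_{\Omega}^{\Omega'}(\EE^{(n)}_{\mu,\mu-n\alpha_i}) = \EE^{(n)}_{\mu,\mu-n\alpha_i}$ (the version in $\DD^b_{\G}(\mbf E_{\Omega'})$), and then applying Proposition \ref{Theta-generator} again over the orientation $\Omega'$ gives $\Theta_{\Omega'}(\EE^{(n)}_{\mu,\mu-n\alpha_i}) = \mathfrak F^{(n)}_{\mu,\mu-n\alpha_i}$. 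Stringing these equalities together yields $\Psi_{\Omega}^{\Omega'}(\mathfrak F^{(n)}_{\mu,\mu-n\alpha_i}) = \mathfrak F^{(n)}_{\mu,\mu-n\alpha_i}$. The argument for $\mathfrak I_{\mu}$ and for $\mathfrak E^{(n)}_{\mu,\mu+n\alpha_i}$ is word-for-word identical, using the corresponding clauses of Lemma \ref{generator-Fourier} and Proposition \ref{Theta-generator}.

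The one point requiring a small remark is the hypothesis of Lemma \ref{Theta-Phi-Psi}: it only applies to objects $K$ with $a^*(K) \simeq K$, so I would note that each of $\II_{\mu}$, $\EE^{(n)}_{\mu,\mu-n\alpha_i}$, $\FF^{(n)}_{\mu,\mu+n\alpha_i}$ lies in ${}^1\DD^-_{\G}(\mbf E_{\Omega})$. This is immediate from their definitions as $Q$ applied to pushforwards of constant sheaves along the maps $\pi_{12}$ from the smooth varieties $\mbf Z_{\Omega}$, $\mbf Z^t_{\Omega}$: the sign involution $a$ on $\mbf E_{\Omega}$ lifts to an automorphism of $\mbf Z_{\Omega}$ (it rescales the arrows but preserves the incidence condition $\rho: X^1 \hookrightarrow X^2$, since that condition is linear and homogeneous in each $x_h$), so $a^*$ fixes the constant sheaf and hence commutes with $\pi_{12!}$ and $Q$. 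Equivalently, one may invoke Lemma \ref{generator-Fourier} with $\Omega'$ chosen so that $\Phi_{\Omega}^{\Omega'}$ composed back is exactly $a^*$. I do not anticipate any genuine obstacle here; the only thing to be careful about is bookkeeping — making sure that the ``$\Omega'$-versions'' of the generators appearing as the right-hand sides in Lemma \ref{generator-Fourier} and in Proposition \ref{Theta-generator} are literally the same objects, which they are by construction. Thus the corollary follows at once.
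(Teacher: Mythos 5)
Your argument is exactly the paper's: chain Lemma \ref{Theta-Phi-Psi} with Proposition \ref{Theta-generator} on both sides and Lemma \ref{generator-Fourier} in the middle, which is precisely how the corollary is deduced there. The only point the paper treats separately is the required $a^*$-invariance of $\II_{\mu}$, $\EE^{(n)}_{\mu,\mu-n\alpha_i}$, $\FF^{(n)}_{\mu,\mu+n\alpha_i}$ (it cites Lusztig, 10.2.4), and your substitute—either lifting the sign involution to $\mbf Z_{\Omega}$, or applying Lemma \ref{generator-Fourier} twice together with $\Phi_{\Omega'}^{\Omega}\Phi_{\Omega}^{\Omega'}=a^*$—is a valid way to supply that step.
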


Actually, we need to show that the complexes in Corollary ~\ref{Psi-generator} are invariant under the functor $a^*$. This can be proved as in ~\cite[10.2.4]{Lusztig93}.

From Proposition ~\ref{theta-convolution}, Corollary ~\ref{Psi-generator} and Theorem ~\ref{U-relations}, we have 

\begin{prop}
\label{Psi-relation}
The functors $\mathfrak I_{\mu}$, $\mathfrak E^{(n)}_{\mu, \mu-n \alpha_i}$ and $\mathfrak F^{(n)}_{\mu, \mu +n \alpha_i}$ 
satisfy the defining relations of $_{\mbb A}\! \dot{\U}$.
\end{prop}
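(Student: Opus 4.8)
The plan is to obtain the relations for the functors by transporting the complex identities of Theorem~\ref{U-relations} through $\T$. First I would isolate the formal properties of $\T$ that are needed. From the formula $\T(K)=P_{2!}(K\otimes P_1^{*}(-))$ one sees at once that $\T$ is additive and commutes with the cohomological shifts $[z]$. By Proposition~\ref{theta-convolution}, $\T$ converts convolution into composition of functors with the order reversed, i.e. $\T(K_1\cdot K_2\cdots K_m)=\T(K_m)\circ\cdots\circ\T(K_1)$. By Proposition~\ref{Theta-generator} it sends the distinguished complexes to the distinguished functors, $\T(\II_{\mu})=\mathfrak I_{\mu}$, $\T(\EE^{(n)}_{\mu,\mu-n\alpha_i})=\mathfrak F^{(n)}_{\mu,\mu-n\alpha_i}$ and $\T(\FF^{(n)}_{\mu,\mu+n\alpha_i})=\mathfrak E^{(n)}_{\mu,\mu+n\alpha_i}$, with $\mathfrak I_{\mu}$ the identity functor; note in particular that $\T$ interchanges the role of $\EE$ with that of $\mathfrak F$ and the role of $\FF$ with that of $\mathfrak E$.

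I would then apply $\T$ to each identity of Lemmas~\ref{I-I}--\ref{EE=E} (which hold for the complexes by Theorem~\ref{U-relations}). Using the properties above, the image of such an identity is an identity among $\mathfrak I_{\mu}$, $\mathfrak E^{(n)}_{\mu,\mu-n\alpha_i}$, $\mathfrak F^{(n)}_{\mu,\mu+n\alpha_i}$ (again with the shift $[z]$ read as $v^{z}$) which is exactly what one obtains from the corresponding defining relation of $_{\mbb A}\!\dot{\U}$ by reversing all products and interchanging the divided powers $E^{(n)}_i\leftrightarrow F^{(n)}_i$ of the Chevalley generators (the idempotents being matched consistently with $\T(\II_{\mu})=\mathfrak I_{\mu}$). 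The key point is that this ``reverse and swap'' operation permutes the list of defining relations written out in Lemmas~\ref{I-I}--\ref{EE=E}: it is realized by a standard anti-automorphism of $_{\mbb A}\!\dot{\U}$ interchanging the two families of Chevalley generators (cf.~\cite[Ch.~23]{Lusztig93}), hence preserves the defining ideal; concretely, Lemma~\ref{I-I} and the relations $\EE\II,\II\EE,\FF\II,\II\FF$ are fixed, the relation $E_iF_j=F_jE_i$ of Lemma~\ref{E-i-j} ($i\neq j$) is fixed, the relation of Lemma~\ref{E-F-i} is fixed (reversing the commutator and swapping $E_i\leftrightarrow F_i$ exchanges its two displayed sides), and the two Serre identities of Lemma~\ref{Serre}, respectively the two divided-power identities of Lemma~\ref{EE=E}, are interchanged. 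Therefore the images under $\T$ of the relations of Theorem~\ref{U-relations} are precisely the full list of defining relations of $_{\mbb A}\!\dot{\U}$, now satisfied by $\mathfrak I_{\mu}$, $\mathfrak E^{(n)}_{\mu,\mu-n\alpha_i}$, $\mathfrak F^{(n)}_{\mu,\mu+n\alpha_i}$.

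Finally I would note that the resulting functor identities are independent of the orientation $\Omega$: the equivalence $\Psi^{\Omega'}_{\Omega}$ is additive, commutes with shifts and with composition of functors (the lemma preceding Lemma~\ref{Theta-Phi-Psi}), and fixes $\mathfrak I_{\mu}$, $\mathfrak E^{(n)}_{\mu,\mu-n\alpha_i}$, $\mathfrak F^{(n)}_{\mu,\mu+n\alpha_i}$ by Corollary~\ref{Psi-generator} (together with their $a^{*}$-invariance noted just after it), so it transports an identity valid for one orientation to the same identity for any other, consistently with the orientation independence of Theorem~\ref{U-relations}. The only step that is not purely formal is the term-by-term verification that reversing products and swapping $E^{(n)}_i\leftrightarrow F^{(n)}_i$ stabilizes the defining relations of the modified algebra; but as this is realized by a well-known anti-automorphism of $\dot{\U}$, there is essentially nothing to prove, and the remaining content is a direct application of Proposition~\ref{theta-convolution}, Proposition~\ref{Theta-generator} and Corollary~\ref{Psi-generator}.
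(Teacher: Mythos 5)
Your proposal is correct and follows essentially the same route as the paper, whose proof of Proposition~\ref{Psi-relation} consists precisely of invoking Proposition~\ref{theta-convolution}, Proposition~\ref{Theta-generator}/Corollary~\ref{Psi-generator} and Theorem~\ref{U-relations}; you merely make explicit the routine bookkeeping that the order-reversal and the interchange $\EE\leftrightarrow\mathfrak F$, $\FF\leftrightarrow\mathfrak E$ induced by $\Theta_{\Omega}$ permute the list of defining relations of $_{\mbb A}\!\dot{\U}$.
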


From Corollary ~\ref{Psi-generator}, one sees  that the functors $\mathfrak {F}^{(n)}_{\mu, \mu-n \alpha_i} $ and 
$\mathfrak {E}^{(n)}_{\mu, \mu +n \alpha_i} $ are the functors $\mathfrak F^{(n)}_{\nu, i}$ and $\mathfrak E^{(n)}_{\nu, i}$ in ~\cite{Zheng08}, respectively. 
Proposition ~\ref{Psi-relation} was first proved in ~\cite[2.5.8]{Zheng08}.

Now that the functor $\Theta_{\Omega}$ induces a bifunctor 
\begin{equation}
\label{action}
\circ: \DD^-_{\G}(\mbf E_{\Omega}(D, V^1, V^2) )\times\DD^-_{\G}(\mbf E_{\Omega}(D, V^1) ) \to \DD^-_{\G}(\mbf E_{\Omega}(D, V^2) )
\end{equation}
given by $K\circ K_1= \Theta_{\Omega} (K) (K_1) = P_{2!} (K\otimes P_1^* (K_1))$ 
for any $K\in \DD^-_{\G}(\mbf E_{\Omega}(D, V^1, V^2) )$ and $K_1\in \DD^-_{\G}(\mbf E_{\Omega}(D, V^1) )$.

Suppose that the complexes $K_{\bullet}$ are semisimple, then we may form an associative algebra  over the ring $\mbb Z[v,v^{-1}]$ of Laurent polynomials.
\[
\KK_d=\bigoplus_{\nu^1,\nu^2\in \mbb N[I]} \KK_{d,\nu^1,\nu^2},
\]
where $\KK_{d,\nu^1,\nu^2}$ is the free $\mbb Z[v,v^{-1}]$-module spanned by the isomorphism classes of simple perverse sheaves
appearing in $K_{\bullet}$ in $\DD^-_{\G}(\mbf E_{\Omega}(D, V^1, V^2) )$.
The multiplication on $\KK_d$ is descended from the convolution product ``$\cdot$'' in (\ref{cdot}).

Let $V_{\underline \lambda}=V_{\lambda_1}\otimes\cdots\otimes V_{\lambda_n}$ be the tensor product of the irreducible integrable representations of $\dot{\U}$ 
with highest weights $\lambda_1$, $\cdots$,  $\lambda_n$ in $\X^+$. Denote by $\DD_{\underline \lambda}$  the full subcategory of 
$\oplus_{V} \DD^-_{\G}(\mbf E_{\Omega}(D, V) )$ such that its  Grothendieck group $\mathscr V_{\underline \lambda}$  
is isomorphic to the integral form of  $V_{\underline \lambda}$ (see ~\cite{Zheng08}).
Let $\QQ_d$  be the full subcategory of $\oplus_{V^1, V^2} \DD^-_{\G}(\mbf E_{\Omega}(D, V^1, V^2))$ consisting of 
all semisimple complexes whose simple summands are from $K_{\bullet}$, up to shifts.
Then the bifunctor (\ref{action}) gives rise to a bifunctor
$\QQ_d \times \DD_{\underline \lambda} \to \DD_{\underline \lambda}$  by restriction, which descends to a bilinear map
\[
\circ : \KK_d \times \mathscr V_{\underline \lambda} \to \mathscr V_{\underline \lambda}.
\]
Let $\BB_d$ (resp. $\BB_{\underline \lambda}$) be the set of all isomorphism classes of simple perverse sheaves appearing in $\QQ_d$ (resp. $\DD_{\underline \lambda}$).
We then have
\[
a\circ b =\sum_{c\in \BB_{\underline \lambda}} s_{a, b}^c c, \quad \mbox{where} \; s_{a, b}^c\in \mbb N[v, v^{-1}],
\]
for any $a\in \BB_d$ and $b\in \BB_{\underline \lambda}$. From this, we have 

\begin{cor}
If the complexes $\K_{\bullet}$ are semisimple and Conjecture 4.14 in ~\cite{Li10b} holds, then the action of the canonical basis elements in $\dot{\U}$ 
on the canonical basis elements in $V_{\underline \lambda}$ has structure constants  in $\mbb N[v,v^{-1}]$ with respect to the canonical basis in $V_{\underline \lambda}$.
\end{cor}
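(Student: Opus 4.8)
The plan is to identify the bilinear map $\circ\colon \KK_d\times\mathscr V_{\underline\lambda}\to\mathscr V_{\underline\lambda}$ constructed above with the module structure of $_{\mbb A}\!\dot{\U}$ on the integral form $\mathscr V_{\underline\lambda}$ of $V_{\underline\lambda}$, so that the positivity asserted in the Corollary becomes a restatement of the nonnegativity of the coefficients $s_{a,b}^c$ recorded just above.

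First I would set up the two algebraic dictionaries. By Proposition~\ref{boundedness} the complexes $K_{\bullet}$ are bounded, and by hypothesis they are semisimple, so $\KK_d$ is a well-defined associative $\mbb Z[v,v^{-1}]$-algebra, free with basis $\BB_d$. By Theorem~\ref{U-relations}, sending the generators of $_{\mbb A}\!\dot{\U}$ to the classes of $\II_{\mu}$, $\EE^{(n)}_{\mu,\mu-n\alpha_i}$ and $\FF^{(n)}_{\mu,\mu+n\alpha_i}$ (with the shift $[z]$ specialized to $v^z$) respects the defining relations, hence defines a homomorphism of $\mbb Z[v,v^{-1}]$-algebras $\Xi\colon {}_{\mbb A}\!\dot{\U}\to\KK_d$. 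Conjecture~4.14 of~\cite{Li10b} asserts, in particular, that $\Xi$ carries the canonical basis $\dot{\mbf B}$ of $_{\mbb A}\!\dot{\U}$ bijectively onto $\BB_d$; under the hypotheses $\Xi$ is then an isomorphism of algebras matching canonical bases. On the other side, by~\cite{Zheng08} the Grothendieck group $\mathscr V_{\underline\lambda}$ of $\DD_{\underline\lambda}$ is isomorphic to the integral form of $V_{\underline\lambda}$ in such a way that $\BB_{\underline\lambda}$ becomes the canonical basis, and the $_{\mbb A}\!\dot{\U}$-action on $\mathscr V_{\underline\lambda}$ is the one induced by Zheng's functors.

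Next I would verify that $\circ$ is precisely this module structure. By Proposition~\ref{theta-convolution} the functor $\Theta_{\Omega}$ intertwines the convolution product with composition of functors, so $\circ$ is an action of the convolution algebra, and hence of $\KK_d$, on $\mathscr V_{\underline\lambda}$. By Proposition~\ref{Theta-generator} one has $\Theta_{\Omega}(\II_{\mu})=\mathfrak I_{\mu}$, $\Theta_{\Omega}(\EE^{(n)}_{\mu,\mu-n\alpha_i})=\mathfrak F^{(n)}_{\mu,\mu-n\alpha_i}$ and $\Theta_{\Omega}(\FF^{(n)}_{\mu,\mu+n\alpha_i})=\mathfrak E^{(n)}_{\mu,\mu+n\alpha_i}$; as observed after Corollary~\ref{Psi-generator}, the functors $\mathfrak F^{(n)}_{\mu,\mu-n\alpha_i}$ and $\mathfrak E^{(n)}_{\mu,\mu+n\alpha_i}$ are Zheng's functors $\mathfrak F^{(n)}_{\nu,i}$ and $\mathfrak E^{(n)}_{\nu,i}$, they restrict to endofunctors of $\DD_{\underline\lambda}$, and, by Proposition~\ref{Psi-relation} (which is~\cite[2.5.8]{Zheng08}), the operators they induce on $\mathscr V_{\underline\lambda}$ realize the action of $_{\mbb A}\!\dot{\U}$ on the integral form of $V_{\underline\lambda}$. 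Since these operators are by construction the action on $\mathscr V_{\underline\lambda}$ of the generators of $_{\mbb A}\!\dot{\U}$ pulled back along $\Xi$, and both this pulled-back action and Zheng's action are algebra actions agreeing on generators, they coincide; thus $\circ$ is the $_{\mbb A}\!\dot{\U}$-module structure on $\mathscr V_{\underline\lambda}$, transported along $\Xi$ and along Zheng's isomorphism.

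With all the identifications in place, the Corollary follows formally: given a canonical basis element $a$ of $\dot{\U}$ and a canonical basis element $b$ of $V_{\underline\lambda}$, $a$ corresponds to $\Xi(a)\in\BB_d$ and $b$ to a member of $\BB_{\underline\lambda}\subset\mathscr V_{\underline\lambda}$, so the formula $\Xi(a)\circ b=\sum_{c\in\BB_{\underline\lambda}}s_{\Xi(a),b}^{c}\,c$ with $s_{\Xi(a),b}^{c}\in\mbb N[v,v^{-1}]$ (the coefficients being graded multiplicities of simple perverse summands of a semisimple complex) becomes $a\cdot b=\sum_{c}s_{a,b}^{c}\,c$ in $V_{\underline\lambda}$ with $s_{a,b}^{c}\in\mbb N[v,v^{-1}]$, the $c$ ranging over the canonical basis of $V_{\underline\lambda}$. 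I expect the bulk of the genuine work to lie in the third step: making the informal assertion ``$\circ$ is the $\dot{\U}$-action'' precise requires a careful bookkeeping of the comparison between the complex-theoretic construction (via $\Theta_{\Omega}$ and the convolution product) and the functorial construction of~\cite{Zheng08} — in particular keeping the cohomological shifts in step with the powers of $v$ under $[z]\mapsto v^z$ coherently across the three identifications ($\Xi$, the action, and Zheng's isomorphism), and reconciling the order reversal and the $E/F$-swap intrinsic to $\Theta_{\Omega}$ (Propositions~\ref{theta-convolution} and~\ref{Theta-generator}) with the module conventions of~\cite{Zheng08}. The only genuinely open input is Conjecture~4.14 itself (together with the semisimplicity hypothesis), which is exactly what guarantees that $\Xi(a)$ is a single simple perverse sheaf rather than an arbitrary integral combination.
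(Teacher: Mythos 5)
Your proposal is correct and follows essentially the same route as the paper, which states the corollary as an immediate consequence of the displayed formula $a\circ b=\sum_c s_{a,b}^c c$ with $s_{a,b}^c\in\mbb N[v,v^{-1}]$ (graded multiplicities in a semisimple complex), combined with Conjecture 4.14 matching canonical basis elements with the simple perverse sheaves in $\BB_d$ and the identification of $\mathscr V_{\underline\lambda}$, $\BB_{\underline\lambda}$ with the integral form of $V_{\underline\lambda}$ and its canonical basis via \cite{Zheng08}. Your explicit bookkeeping of the homomorphism $\Xi$, the compatibility of $\circ$ with the $_{\mbb A}\!\dot{\U}$-action through Propositions \ref{theta-convolution}, \ref{Theta-generator} and \ref{Psi-relation}, and the order-reversal/$E$--$F$ swap of $\Theta_{\Omega}$ simply spells out what the paper leaves implicit.
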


\begin{rem}
(1). It should be true that the functor $\Theta_{\Omega}$ is fully faithful.

(2). We are not sure if the superscript $1$ in the categories in Lemma ~\ref{Theta-Phi-Psi} can be dropped.

(3). The algebra $\KK_d$ should be the generalized $q$-Schur algebras (\cite{D03}) when the graph $\Gamma$ is of finite type.
\end{rem}


\begin{thebibliography}{99999}\frenchspacing


\bibitem[BBD82]{BBD82} A. A. Beilinson, J. Bernstein, P. Deligne,
         {\em Faisceaux pervers},
         Ast\'{e}risque {\bf 100} (1982).


\bibitem[BL94]{BL94} J. Bernstein, V. Lunts, 
         {\em Equivariant sheaves and functors},
         LNM {\bf 1578}, 1994.


           
\bibitem[D03]{D03}  S. Doty, 
         {\em Presenting generalized $q$-Schur algebras}.  Represent. Theory {\bf 7} (2003), 196--213.
         
\bibitem[FK88]{FK88} E. Freitag,  R. Kiehl, 
          {\em \'{E}tale cohomology and the Weil conjecture},
          Translated from the German by Betty S. Waterhouse and William C. Waterhouse. 
          With an historical introduction by J. A. DieudonnŽ. 
          Ergebnisse der Mathematik und ihrer Grenzgebiete ({\bf 3}) [Results in Mathematics and Related Areas (3)], 13. Springer-Verlag, Berlin, 1988. xviii+317 pp.  
                                    

         
    
                   
\bibitem[KS90]{KS90} M. Kashiwara, P.  Schapira, 
       {\em  Sheaves on manifolds. With a chapter in French by Christian Houzel}. 
       Grundlehren der Mathematischen Wissenschaften [Fundamental Principles of Mathematical Sciences], 
       {\bf 292}. Springer-Verlag, Berlin, 1994. 


          
          
\bibitem[KW01]{KW01} R. Kiehl, R.  Weissauer, 
         {\em Weil conjectures, perverse sheaves and $l$'adic Fourier transform}, 
         Ergebnisse der Mathematik und ihrer Grenzgebiete. 3. Folge. A Series of Modern Surveys in Mathematics 
         [Results in Mathematics and Related Areas. 3rd Series. A Series of Modern Surveys in Mathematics], {\bf 42}. Springer-Verlag, Berlin, 2001. xii+375 pp.

\bibitem[LO08a]{LO08a} Y. Laszlo, M. Olsson,
         {\em The six operations for sheaves on Artin stacks. I. Finite coefficients}, Publ. Math. Inst. Hautes Etudes Sci. {\bf 107} (2008), 109-168.
         
\bibitem[LO08b]{LO08b} Y. Laszlo, M. Olsson,
        {\em The six operations for sheaves on Artin stacks. II. Adic coefficients}, Publ. Math. Inst. Hautes Etudes Sci.  {\bf 107} (2008), 169-210. 
        
\bibitem[LO09]{LO09} Y. Laszlo, M. Olsson,
        {\em Perverse sheaves on Artin stacks}, arXiv:math/0606175.
        
\bibitem[LMB00]{LMB00} G. Laumon, L. Moret-Bailly, 
         {\em Champs algebriques}, Ergebnisse der Mathematik und ihrer Grenzgebiete, {\bf 3} Folge. A series of Modern Surveys in Mathematics Vol. {\bf 39},
         Springer-Verlag, Berlin, 2000.
         
                
\bibitem[Li10a]{Li10b} Y. Li,
        {\em On geometric realizations of quantum modified algebras and their canonical bases},  arXiv:math/1007.5384.


\bibitem[L93]{Lusztig93} G. Lusztig,
        {\em Introduction to quantum groups}, 
        Progress in Math. {\bf 110}, Birkh\"{a}user {1993}.

                
  
        
\bibitem[S08]{Schn08} O. Schn\"{u}rer,
          {\em Equivariant sheaves on flag varieties}, arXiv:0809.4785.
          


          
\bibitem[V76]{Verdier76} J.L. Verdier,  {\em Categories derivees}, etat 0. In SGA 4.5, LNM {\bf 569}.

\bibitem[WW09]{WW09} B. Webster,  G. Williamson, {\em The bounded below equivariant derived category}, 
             preprint.
                    

\bibitem[ZH08]{Zheng08} H. Zheng,
       {\em  Categorification of integrable representations of quantum groups}, arXiv:0803.3668.      

\end{thebibliography}
\end{document}